\documentclass[reqno,12pt]{amsart}
\usepackage[utf8]{inputenc}  

\usepackage{float} 
\usepackage[in]{fullpage}

\newcommand{\ds}{\displaystyle}

\newcommand{\tensor}{\otimes}
\newcommand{\leftsub}[2]{{\vphantom{#2}}_{#1}{#2}} 
\newcommand{\xycirc}[2]{\leftsub{#1}{\circ}_{#2}}

\newcommand{\op}{\mathcal}

\newcommand{\cdc}{,\dots,}

\usepackage{amsmath}%
\usepackage{amsthm}
\usepackage{amsfonts}%
\usepackage{amssymb}%
\usepackage{graphicx}
\usepackage{xy,amsthm,enumerate,xypic,array}  
\usepackage{xcolor}

\input{xy}
\xyoption{all}

\numberwithin{equation}{section}

\newtheorem{theorem}{Theorem}[section]
\theoremstyle{plain}

\newtheorem{corollary}[theorem]{Corollary}
\newtheorem{lemma}[theorem]{Lemma}
\newtheorem{proposition}[theorem]{Proposition}

\theoremstyle{definition}
\newtheorem{definition}[theorem]{Definition}
\newtheorem{example}[theorem]{Example}

\newtheorem{remark}[theorem]{Remark}

\allowdisplaybreaks[2]

\addtocounter{MaxMatrixCols}{2}

\begin{document}


\title{From configuration spaces to graph complexes via $\mathbf{FA}$-modules}
\author{Ayako Carter}
\email{ayakoc@bgsu.edu}

\author{Benjamin C.\ Ward}
\email{benward@bgsu.edu}

\maketitle

\begin{abstract} 
	Work of Gadish and Hainaut (after 
	 Petersen) models the compactly supported cohomology of a wedge of circles as a polynomial functor.  We identify the coefficients of this functor, $\Phi[n,m]$, via a cobar construction of $\mathbf{FA}$-modules.  This identification formally implies that these coefficients will arise in computations of graph homology, and we use this result to give examples of graph complexes whose homology may be embedded in  $H_c^\ast(F(S^1\vee S^1,n))$.

This includes the Payne-Willwacher marked graph complex in genus 2, for which we give a new, explicit decomposition in terms of $\mathbf{FA}$-modules.  This allows us to describe $\mathsf{gr}_{11}H_c^{\ast}(\op{M}_{2,n})$ as the cohomology of a complex of decorated trees and to show, for example, $\mathsf{gr}_{11}H_c^{n+1}(\op{M}_{2,n})=0$.

\end{abstract}


\section{Introduction}
\subsection{Context}\label{context}
Write $R_g:= S^1\vee\dots\vee S^1$ for the wedge of $g$ circles.   To begin, let's consider two statements about the space $R_g$.  The first statement (Equation $\ref{statement1}$) is quite elementary, the second (Equation $\ref{statement2}$) much less so, but they will ultimately be seen to be formal consequences of each other.

For the first statement, let $\mathsf{Inj}_{m}(n)$ denote the $\mathbb{Q}$-span of the set of injections from $\bar{m}:=\{1\cdc m\}$ to $\bar{n}:=\{1 \cdc n\}$.  Write $H^\ast(R_g)$ (resp.\  $\tilde{H}^\ast(R_g)$) for the rational cohomology (resp.\ reduced rational cohomology) of the space $R_g$.  Then there is an isomorphism of graded $S_n$-representations
\begin{equation}\label{statement1}
	H^\ast(R_g)^{\tensor n} = \bigoplus_{m}\textsf{Inj}_m(n)\tensor_{S_m} \tilde{H}^{\ast}(R_g)^{\tensor m}.
\end{equation}
Here we interpret $\textsf{Inj}_m(n)=0$ when $n<m$, so this is a finite sum.

For the second statement, let us write $F(X,n)$ for the configuration space of $n$ ordered points in a space $X$.  We consider the rational cohomology with compact support, denoted $H_c^\ast(F(X,n))$.   Work of Gadish and Hainaut \cite{GH}, building on foundational work of Petersen \cite{Pet20}, shows that when $X$ is taken to be a finite wedge of spheres, the functor $X\mapsto H_c^\ast(F(X,n))$ factors as $X \mapsto \tilde{H}^\ast(X)$ composed with a polynomial functor of degree $n$.    This implies that there exists a family of graded $S_n\times S_m$ modules $\Phi[n,m]$ for which there is an isomorphism of graded $S_n$-representations
\begin{equation}\label{statement2}
	H_c^\ast(F(R_g,n))\cong \bigoplus_m\Phi[n,m]\tensor_{S_m} \tilde{H}^\ast(R_g)^{\tensor m}.
\end{equation}
Here we interpret $\Phi[n,m]=0$ when $n<m$, so this is a finite sum.

We present Statements 1 and 2 in parallel to try to suggest a relationship between them.  But to elucidate this relationship we must move from considering a specific $n$ to considering all $n$ at once.  Observe that if $f\colon\bar{n}_1\to\bar{n}_2$ is any function, there is a corresponding linear map $f_\ast\colon\mathsf{Inj}_m(n_1)\to \mathsf{Inj}_m(n_2)$ defined by $f_\ast(\phi) := f\circ \phi$ if this composition is injective and $f_\ast(\phi)=0$ if it is not.  This assignment is functorial, and indeed $\mathsf{Inj}_m$ may be viewed as a functor from the category of finite sets.  We recall common parlance for such a functor is an $\mathbf{FA}$-module.  The left hand side of Equation $\ref{statement1}$ can also be lifted to such a functor, and the equivalence lifts to an isomorphism of $\mathbf{FA}$-modules.

The notion of an $\mathbf{FA}$-module is linear dual to the operadic notion of right modules over the commutative operad, and from this operadic perspective it's natural to take the bar/cobar complex of such a module.  
Before doing so,  let us first recall a bit of the basic structure of the category of $\mathbf{FA}$-modules, drawing form \cite{Powell, CLPW2}.  For every partition $\lambda$ of $m$ we may form an $\mathbf{FA}$-module $\mathsf{C}_\lambda:=\mathsf{Inj}_m\tensor_{S_m}V_\lambda$.  For every partition not equal to $(1^m):=(1,...,1)$, such a $\mathsf{C}_\lambda$ is, in fact, simple as an $\mathbf{FA}$-module.  The only simple $\mathbf{FA}$-modules not of this form arise as submodules $\widetilde{\mathsf{C}}_{1^{m}}\subset\mathsf{C}_{1^{m-1}}$ for $m\geq 1$ .

On the other hand, by Equation $\ref{statement2}$, determining the graded $S_n$-module $H_c^\ast(F(X,n))$ is reduced to determining the graded $S_n$-modules $\Phi[n,\lambda]:=\Phi[n,m]\tensor_{S_m}V_\lambda$ for all $\lambda$ and $m\leq n$.  We relate these objects via the non-unital variant of the cobar construction $\Omega$:

\begin{theorem}\label{thm1}  Let $\lambda$ be a partition of $m$.  If $\lambda \neq (1^m)$, there exists a simple $\mathbf{FA}$-module $\mathsf{C}_\lambda$, for which 
$$	H^\ast(\Omega(\mathsf{C}_\lambda)(n)) = \Phi[n,\lambda].$$

\end{theorem}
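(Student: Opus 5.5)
The plan is to realize both sides of the claimed isomorphism as the same complex. The first step is to compute $H_c^\ast(F(R_g,n))$ by Petersen's resolution of the compactly supported cochains of a configuration space (as used by Gadish and Hainaut). Since $R_g$ is compact, $H_c^\ast=H^\ast$. For a wedge of spheres, formality lets us replace cochains by cohomology. This gives a complex
$$\bigoplus_{T\vdash \bar n} H^\ast(R_g)^{\tensor T}\tensor(\text{orientation/sign data of }T),$$
indexed by set partitions $T$ of $\bar n$. Its differential merges blocks, and on coefficients it is induced by the diagonal maps $R_g^{T}\to R_g^{T'}$ for a coarsening $T'$ of $T$, i.e.\ by the cup product. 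The key observation is that this is precisely the non-unital cobar construction $\Omega$ of the $\mathbf{FA}$-module $k\mapsto H^\ast(R_g)^{\tensor k}$. The $\mathbf{FA}$-functoriality along a surjection $\bar k\to\bar k'$ is exactly the map induced by the diagonal of $R_g$, and the partition-indexed sum with merging differential is the cobar complex of a right module over (the dual of) the commutative operad. I would first check this identification carefully, degree shifts and signs included, against the definition of $\Omega$ used in the paper.

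Next, apply Statement~\eqref{statement1} as an isomorphism of $\mathbf{FA}$-modules. It gives
$H^\ast(R_g)^{\tensor \bullet}\cong\bigoplus_m \mathsf{Inj}_m\tensor_{S_m}\tilde H^\ast(R_g)^{\tensor m}$, naturally in $R_g$. Decomposing $\tilde H^\ast(R_g)^{\tensor m}$ into $S_m$-isotypic pieces yields $\bigoplus_{\lambda\vdash m}\mathsf{C}_\lambda\tensor \mathrm{Mult}_\lambda(\tilde H^{\tensor m})$. The cobar construction is additive and commutes with tensoring by the multiplicity spaces, which carry no $\mathbf{FA}$-action. So
$$H_c^\ast(F(R_g,n))\cong \bigoplus_m\bigoplus_{\lambda\vdash m} H^\ast(\Omega(\mathsf{C}_\lambda)(n))\tensor \mathrm{Mult}_\lambda(\tilde H^\ast(R_g)^{\tensor m}).$$
Here I must make sure that the cup-product (cobar) differential really preserves the $m$-grading. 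I expect it does because the product $\tilde H^1\cdot\tilde H^1=0$ on a wedge of circles, so the decomposition of Statement~\eqref{statement1} is by $\mathbf{FA}$-submodules. I must also make sure the grading of $\tilde H^\ast$ matches.

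Finally, compare with Statement~\eqref{statement2}, written as $\bigoplus_{m,\lambda}\Phi[n,\lambda]\tensor \mathrm{Mult}_\lambda(\tilde H^{\tensor m})$. Both sides are polynomial functors in $\tilde H^\ast(R_g)$, natural for all $g$ and for maps of wedges, so uniqueness of coefficients of polynomial functors (taking $g\ge m$ so every $V_\lambda$ occurs) forces $\Phi[n,\lambda]\cong H^\ast(\Omega(\mathsf{C}_\lambda)(n))$. One caveat: Gadish--Hainaut's functor is on all maps of wedges, not only $\mathbf{FA}$-type ones, so the comparison should be made as polynomial functors in the graded vector space $\tilde H^\ast$. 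The hypothesis $\lambda\neq(1^m)$ is only used so that $\mathsf{C}_\lambda$ is simple; the identification itself holds for all $\lambda$.

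The main obstacle is the first step: matching Petersen's complex, with its sign/orientation twists and the suspensions built into compact supports, exactly with the paper's conventions for $\Omega$. If the shifts do not line up, I would absorb them into a regrading of $\mathsf{C}_\lambda$ (e.g.\ tensoring with a sign representation, which swaps $\lambda$ with $\lambda'$). I would then check in the small cases $n=1,2$ against known $H_c^\ast(F(S^1\vee S^1,2))$.
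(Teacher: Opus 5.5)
Your overall architecture is the paper's. It applies Petersen's theorem to the formal model $A=H^\ast(R_g)$ (Corollary \ref{DPcor}). It then uses the $\mathbf{FA}$-splitting of Lemma \ref{eqfa}, which is by $\mathbf{FA}$-submodules precisely because $\tilde H^\ast\cdot\tilde H^\ast=0$, as you guessed. Next comes additivity of $\Omega$. Finally it uses uniqueness of coefficients of a polynomial functor, implemented as your caveat suggests: via $W\mapsto H^\ast(\Omega((A_W)^{\tensor}))(n)$ with $A_W=\mathbb{Q}\oplus\Sigma W$, natural in all linear maps of $W$. Simplicity of $\mathsf{C}_\lambda$ for $\lambda\neq(1^m)$ is simply quoted from the literature there as well.

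The gap is your first step. Petersen's complex is \emph{not} the cobar construction, and the discrepancy is not a matter of signs and shifts. The partition-indexed complex with block-merging differential is the Chevalley--Eilenberg complex $\mathsf{CE}_\ast(A\tensor\mathsf{sLie})$. The coefficient attached to a partition $T$ is $A^{\tensor T}\tensor\bigotimes_{B\in T}\mathsf{sLie}(B)$, of dimension $\prod_{B}(|B|-1)!$, not a line. By contrast, $\Omega(A^{\tensor})$ is indexed by pointed trees. That is, it is built from the cobar construction $\mathsf{sL}_\infty$ of the commutative cooperad rather than from $\mathsf{sLie}$. Already for $A=\mathbb{Q}$ and $n=3$ the two complexes have total dimensions $8$ and $6$, so no regrading can identify them.

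What is missing is the Koszul-duality comparison of Lemma \ref{CEthm}. This is a natural map $\eta\colon\Omega(A^{\tensor})\to\mathsf{CE}_\ast(A\tensor\mathsf{sLie})$ that kills trees having a non-trivalent neutral vertex and reads each branch as a Lie word. It is a chain map because the Jacobi identity kills the expansions of $4$-valent neutral vertices. It is a quasi-isomorphism: filtering the mapping cone by the number of branches gives an associated graded of the form $A^{\tensor k}\tensor\bigotimes_i(\mathsf{sL}_\infty(b_i)\to\mathsf{sLie}(b_i))$, which is acyclic by Koszulity of $\mathsf{Com}$ and $\mathsf{Lie}$. Its naturality in $A$ is what makes your final polynomial-functor comparison legitimate.

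Separately, your fallback of tensoring with the sign representation would replace $\mathsf{C}_\lambda$ by $\mathsf{C}_{\lambda^\ast}$, and so would prove a different statement. No such fix is needed. Once $\Phi[n,m]\cong H^\ast(\Omega(\mathsf{Inj}_m))(n)$ as $S_n\times S_m$-modules, apply the exact functor $-\tensor_{S_m}V_\lambda$, which commutes with $\Omega$. The conjugate partition only enters when pairing with Schur functors, because $\tilde H^\ast(R_g)$ sits in odd degree, as in Lemma \ref{qlambda}.
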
 
If $\lambda = (1^m)$ the result still holds, except $\mathsf{C}_{1^m}$ is not simple.  The family $\lambda = (1^m)$ is the one case where this homology is well understood and can be described explicitly in terms of the family $H^\ast(\Omega(\widetilde{\mathsf{C}}_{1^m}))$; see Example $\ref{WHex}$. 

We conclude that any time we take the cobar construction of a semi-simple $\mathbf{FA}$-module, the coefficients $\Phi[n,\lambda]$ are sure to arise.  One situation where this occurs is the study of graph complexes, particularly in examples of graph complexes which arise via the Feynman transform of modular operads containing the commutative operad.

\subsection{Induced $\mathbf{FA}$-modules in Lie Graph Homology}
To identify a specific example to which the above general considerations can be applied, we recall Kontsevich's Lie graph complex \cite{KontFormal}, encoded via the Feynman transform \cite{GeK2}.  If we write $\mathsf{sLie}$ for the shifted Lie operad, applying the Feynman transform returns a sequence of chain complexes $\mathsf{FT}(\mathsf{sLie})(g,0)$ which compute the homology of the groups $Out(F_g)$, the outer automorphism group of the free group on $g$ generators.  Subsequently Conant, Kassabov and Vogtmann \cite{CKV} showed that each $\mathsf{FT}(\mathsf{sLie})(g,n)$ computes the homology of a group $\Gamma_{g,n}$ which can be described, after \cite{CHKV}, as the homotopy classes of homotopy automorphisms of a wedge of $g$ circles which fix $n$ given points.

The analysis given in \cite{CHKV} can be assembled to prove:
\begin{lemma}\label{decomplem}  For each degree $d$ the symmetric sequence $H^d(\Gamma_{2,\ast})$ forms an  $\mathbf{FA}$-module of the form $\oplus \mathsf{C}_\lambda$, whose decomposition over $\lambda$ may be explicitly presented.
\end{lemma}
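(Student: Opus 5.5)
We will use the geometric description of $\Gamma_{2,n}$ from \cite{CHKV}. There $\Gamma_{g,n}$ sits in a split short exact sequence
$$1\to F_g^{\,n}\to \Gamma_{g,n}\to \Gamma_{g,0}=Out(F_g)\to 1,$$
at least for $n\geq 1$; more precisely $\Gamma_{g,n}$ is built from $Aut(F_g)$ and $n-1$ further copies of $F_g$. For $g=2$, the group $Out(F_2)\cong GL_2(\mathbb{Z})$ is virtually free, so rationally it only has invariants in degree $0$. The plan is therefore to run the Lyndon--Hochschild--Serre spectral sequence for this extension with rational coefficients. This gives
$$H^\ast(\Gamma_{2,n};\mathbb{Q})\cong H^\ast(F_2^n;\mathbb{Q})^{GL_2(\mathbb{Z})},$$
provided the differentials vanish. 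They will, since rational cohomology of $GL_2(\mathbb{Z})$ with coefficients in any finite dimensional rational representation is concentrated in degrees $0$ and $1$. I would first verify that, in the relevant cases, the only contributions come from $H^0$; any $H^1$ contributions would need to be tracked separately through Eichler--Shimura-type classes. Here $GL_2(\mathbb Z)$ acts through $GL_2(\mathbb{Q})$ on $H^1(F_2)=V$, the standard representation.

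Next I identify the $\mathbf{FA}$-structure. By Künneth, $H^\ast(F_2^n)=H^\ast(R_2)^{\tensor n}$. Equation $\ref{statement1}$ then decomposes this as $\bigoplus_m \mathsf{Inj}_m(n)\tensor_{S_m} V^{\tensor m}$, with $V$ in degree $1$ and the Koszul sign. This isomorphism is one of $\mathbf{FA}$-modules, and it is compatible with the $\mathbf{FA}$-structure on $H^\ast(\Gamma_{2,\ast})$, which comes from the maps between the $\Gamma_{2,n}$ induced by maps of marked-point sets (forgetting or merging points). This compatibility is the step I expect to be the main obstacle: one must check that the maps of \cite{CHKV} induce exactly the diagonal/projection maps on $H^\ast(R_2)^{\tensor n}$, and that they commute with the $GL_2(\mathbb{Z})$-action. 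I would establish it by realizing both sides on classifying spaces, using the fiber $R_2^n$ and functoriality of $X\mapsto X^n$ in the finite set.

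Taking invariants termwise, which is exact over $\mathbb{Q}$, gives
$$H^d(\Gamma_{2,\ast})=\mathsf{Inj}_d\tensor_{S_d}\big((V^{\tensor d})_{\mathrm{sgn}}\big)^{GL_2(\mathbb Z)},$$
with the sign twist coming from the odd degree of $V$. Schur--Weyl duality gives $V^{\tensor d}=\bigoplus_{\ell(\mu)\le 2} S_\mu V\tensor V_\mu$. The Zariski density of $GL_2(\mathbb{Z})$ means the invariants are the $SL_2$-invariants on which $\det$ acts through $\pm1$ compatibly. These are the summands with $\mu=(k,k)$, where $S_{(k,k)}V=\det^k$ and $k$ is even. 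After the sign twist $\mu\mapsto\mu'$, this yields
$$H^{2k}(\Gamma_{2,\ast})\cong \mathsf{C}_{(2^{k})'}=\mathsf{C}_{\lambda},\qquad \lambda=(k,k)' \text{ transposed appropriately},$$
with $k$ even (adjusted by the sign on $\det$), and zero in other degrees. In each degree this is an explicit direct sum of $\mathsf{C}_\lambda$'s, as the statement claims. The remaining task is to pin down the parity and transpose conventions and the $H^1(GL_2(\mathbb{Z});-)$ contributions. If the latter are nonzero, they would add further summands $\mathsf{C}_\mu$ with multiplicities given by dimensions of cusp-form spaces, and the decomposition would still be explicitly presentable.
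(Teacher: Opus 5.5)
\textbf{There is a genuine gap: the odd-degree cohomology.} Your skeleton is the same as the paper's. You run Hochschild--Serre for $1\to F_2^n\to\Gamma_{2,n}\to GL_2(\mathbb{Z})\to 1$, get degeneration from the rational cohomological dimension $1$ of $GL_2(\mathbb{Z})$, apply K\"unneth plus Lemma \ref{eqfa}, and then move $\mathsf{C}$ outside the group cohomology. Your computation of the column $E_2^{0,\ast}$ is correct: once the transpose is fixed ($\lambda=(k,k)^\ast=(2^k)$), it gives $\mathsf{C}_{(2^{d/2})}$ for $d\equiv 0$ mod $4$ and nothing for $d\equiv 2$ mod $4$.

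The problem is the other column. You treat $E_2^{1,\ast}=H^1(GL_2(\mathbb{Z});H^\ast(F_2^n))$ as a correction you hope to rule out, and your stated conclusion (``zero in other degrees'') is false. Since $-I$ is central in $GL_2(\mathbb{Z})$ and acts on $H^q(F_2^n)$ by $(-1)^q$, all rational group cohomology with these coefficients vanishes for $q$ odd. Hence even degrees come only from $H^0$, and every odd degree $H^{q+1}(\Gamma_{2,n})$ comes only from $H^1(GL_2(\mathbb{Z});H^q(F_2^n))$ with $q$ even. This also rules out any extension problem between the two columns as $\mathbf{FA}$-modules. These $H^1$ groups are nonzero: Proposition \ref{g2dec} gives $H^5(\Gamma_{2,\ast})\cong\mathsf{C}_{(2,1,1)}$, coming from $\dim\op{M}_4=1$. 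They carry most of the content of the lemma.

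To close the gap you need the following ingredients, which you do not supply.
\begin{enumerate}
\item \textbf{A natural isomorphism $H^1(G;\mathsf{C}(K))\cong\mathsf{C}(H^1(G;K))$ of $\mathbf{FA}$-modules.} Your reason for pulling $\mathsf{C}$ out of invariants (``taking invariants is exact over $\mathbb{Q}$'') is wrong for the infinite group $GL_2(\mathbb{Z})$; exactness would force $H^1=0$. The correct reason, the paper's Lemma \ref{pulloutc}, is that $\mathsf{Inj}_m\tensor_{S_m}K\cong Hom_{S_m}(\mathsf{Inj}_m^\ast,K)$, so $\mathsf{C}$ commutes with $H^i(G,-)$ for every $i$. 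This yields $H^{q+1}(\Gamma_{2,\ast})\cong\bigoplus_{a+b=q}\dim H^1(GL_2(\mathbb{Z});\mathbb{S}_{(a,b)}V)\,\mathsf{C}_{(a,b)^\ast}$.
\item \textbf{The multiplicities.} We have $\mathbb{S}_{(a,b)}V=\det^b\tensor\mathrm{Sym}^{a-b}V$. By Eichler--Shimura (the paper cites \cite[Lemma 3.8]{CHKV}), $\dim H^1(GL_2(\mathbb{Z});\det^b\tensor\mathrm{Sym}^{a-b}V)$ equals $\dim\op{M}_{a-b+2}$ for $b$ odd and $\dim\op{S}_{a-b+2}$ for $b$ even. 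So Eisenstein classes contribute, and your guess of ``cusp-form dimensions'' is wrong for odd $b$.
\item \textbf{The compatibility step you flag as the main obstacle.} It must be made with the modular-cooperadic $\mathbf{FS}$-structure on $H^\ast(\Gamma)$, the one used in the Feynman transform, not merely with some forgetting/merging maps between the $\Gamma_{2,n}$. The paper does this in three steps. It uses \cite{CHKV}'s identification of the map induced by the splitting $\epsilon\colon\Gamma_{2,n}\to\Gamma_{2,n+1}$ with $\xycirc{0}{n}$. It checks the commuting square relating $\epsilon$ to the diagonal $F_2^n\to F_2^{n+1}$. It then applies naturality of the spectral sequence.
\end{enumerate}
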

See Proposition \ref{g2dec}.  
This generalizes the case of $g=1$ in which the FA-module is simple; see Lemma $\ref{g1fa}$.  It thus follows that in low genus the Feynman transform of $H_\ast(\Gamma)$ can be expressed via $\Phi[n,\lambda]$.  Combining this observation with Equation $\ref{statement2}$, we compare the homology of this Feynman transform to the configuration space of points in $R_g$.  

 To give the statement we write 
 $\mathsf{FT}_\Gamma:= \mathsf{FT}(H_\ast(\Gamma))(2,-)$.  
  We then filter this symmetric sequence by the sum of the genus weights of the vertices.  Passing to the associated graded we find:
 
\begin{theorem}\label{ftthm}  	Consider $H_c^\ast(F(S^1\vee S^1),n)$ as a $S_n\times D_{12}$-module, by inclusion of the dihedral group of order 12 in $GL_2(\mathbb{Z})$.  There exist isomorphisms of graded symmetric sequences:
	\begin{enumerate}
		\item $H^{\ast+1}(\mathsf{gr}_2(\mathsf{FT}_\Gamma))\oplus \mathsf{W}_{\pm} 
		\cong H_c^{\ast}(F(S^1\vee S^1),-)\tensor_{D_{12}} V_{120}$	
		\item $H^{\ast+2}(\mathsf{gr}_1(\mathsf{FT}_\Gamma))\oplus \mathsf{W}_{\pm} \cong  H_c^\ast(F(S^1\vee S^1),-)\tensor_{D_{12}} (\chi_s \oplus V_{120} ) $	
		\item  $H^{\ast+3}(\mathsf{gr}_0(\mathsf{FT}_\Gamma))\cong H_c^\ast(F(S^1\vee S^1),-)\tensor_{D_{12}} \chi_s$	
	\end{enumerate}
where $\chi_s$ is the 1-dimensional representation fixed by rotation generator $r$ and alternating on reflection generator $s$, $V_{120}$ is the 2-dimensional irreducible in which $r$ corresponds to $120^\circ$ rotation, and $\mathsf{W}_{\pm}$ is a $\mathbb{Z}_2$ graded sum of Whitehouse modules $\mathsf{W}_{n,\ast}$ concentrated in degrees $n$ and $n-1$.
\end{theorem}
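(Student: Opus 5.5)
We start by describing the graphs that contribute to $\mathsf{FT}_\Gamma=\mathsf{FT}(H_\ast(\Gamma))(2,-)$, sorted by the total genus weight of their vertices. Since the ambient genus is $2$, a graph with total vertex weight $w$ has first Betti number $2-w$. In $\mathsf{gr}_2$ the graphs are trees with one vertex of genus $2$ (or two vertices of genus $1$). In $\mathsf{gr}_1$ the graphs have one loop, and in $\mathsf{gr}_0$ they have Betti number $2$.

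The key structural input is Lemma \ref{decomplem} (Proposition \ref{g2dec}), together with the genus-one statement Lemma \ref{g1fa}. These say that in each degree the vertex decorations $H_\ast(\Gamma_{g,\ast})$, for $g=1,2$, are $\mathbf{FA}$-modules of the form $\bigoplus \mathsf{C}_\lambda$.

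Next, in each graded piece we contract the genus-$0$ part of the graph, i.e. the trees of commutative vertices. Dually to the $\mathbf{FA}$-structure, the differential that expands a decorated vertex by splitting off leaves is exactly the cobar differential $\Omega$ of the corresponding $\mathbf{FA}$-module. After a filtration or spectral sequence argument on the number of non-tree edges, each $\mathsf{gr}_w$ should reduce to a sum, over the finitely many core shapes, of $\Omega(\mathsf{C}_\lambda)$ twisted by the automorphism group of the core.

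\begin{itemize}
\item For $\mathsf{gr}_0$, the core is a rose or theta graph with genus-$0$ vertices. Its loop decorations give $\tilde H^\ast(R_2)^{\otimes m}$, and the automorphism group of the theta graph, which realises $D_{12}$ acting through $GL_2(\mathbb{Z})$ on $H_1$, together with orientation signs, produces the twist $\chi_s$.
\item For $\mathsf{gr}_1$, we use a genus-$1$ vertex on a loop.
\item For $\mathsf{gr}_2$, we use the genus-$2$ vertex decorated by the $\mathsf{C}_\lambda$ of Proposition \ref{g2dec}.
\end{itemize}

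We then apply Theorem \ref{thm1} to rewrite $H^\ast(\Omega(\mathsf{C}_\lambda)(n))=\Phi[n,\lambda]$. Equation \ref{statement2} with $g=2$ gives
\[H_c^\ast(F(R_2,n))\cong\bigoplus_m \Phi[n,m]\otimes_{S_m}\tilde H^\ast(R_2)^{\otimes m},\]
where $\tilde H^\ast(R_2)^{\otimes m}$ is a $GL_2(\mathbb{Z})$-representation. Tensoring over $D_{12}$ with an irreducible $\rho$ therefore amounts to taking the multiplicity space of $\rho$ in $(\mathbb{Q}^2)^{\otimes m}$ (up to signs), and this space decomposes over the $V_\lambda$ with $\lambda\vdash m$. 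The matching step is a character computation: we must check that the multiplicities of $\mathsf{C}_\lambda$ appearing in each graded piece agree with the multiplicities of $V_\lambda$ in $\mathrm{Hom}_{D_{12}}(\rho,(\mathbb{Q}^2)^{\otimes m})$, for $\rho=\chi_s$, $V_{120}$, and $\chi_s\oplus V_{120}$ respectively. The degree shifts $+1$, $+2$, $+3$ come from the degree conventions for edges and vertex decorations in the Feynman transform, and we will track them along the way.

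The correction term $\mathsf{W}_{\pm}$ comes from the non-simple modules $\mathsf{C}_{1^m}$. By Example \ref{WHex}, $\Omega(\mathsf{C}_{1^m})$ still computes $\Phi[n,1^m]$, but when the graph complex actually contains the simple pieces $\widetilde{\mathsf{C}}_{1^m}$ (or vice versa), the discrepancy is the cobar homology of $\widetilde{\mathsf{C}}_{1^m}$. This is a Whitehouse module concentrated in two adjacent degrees, $n$ and $n-1$.

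We expect the main obstacle to be this bookkeeping: correctly matching the explicit decomposition in Proposition \ref{g2dec}, including the sign representation twists and the $1^m$ summands, against the $D_{12}$-isotypic pieces of $(\tilde H^1(R_2))^{\otimes m}$, uniformly in $m$. Our first attempt will be to compare generating functions of characters in the ring of symmetric functions, since both sides are given by plethystic formulas.
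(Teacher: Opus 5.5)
Your proposal has a genuine gap at the step where ``a filtration on the number of non-tree edges reduces each $\mathsf{gr}_w$ to $\Omega(\mathsf{C}_\lambda)$ over finitely many core shapes, twisted by the automorphisms of the core.'' For $\mathsf{gr}_1$ this step \emph{is} statement (2), and it does not go through as described.

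\begin{itemize}
\item The genus-one decorations are $\widetilde{\mathsf{C}}_{1^{2j+1}}$ (Lemma~\ref{g1fa}), not modules of the form $\mathsf{C}_\lambda$, so Theorem~\ref{thm1} does not apply to them. The paper first replaces them by their Koszul resolution by the $\mathsf{C}_{1^m}$. This identifies $\mathsf{gr}_1(\mathsf{FT}_\Gamma)(n)\simeq\bigoplus_j\Sigma^{4j+1}B(2,n,2j+1)$ (Lemma~\ref{tot}): a complex of one-loop graphs with cycles of unbounded length and an extra marking differential.
\item The homology of that complex is not ``a core twisted by its automorphisms.'' Theorem~\ref{decompthm} gives $\Sigma^{r-2}B(2,n,r)\simeq\Omega(\mathsf{C}_{(2,1^{r-3})})\oplus\bigoplus_{i\geq 1}\Sigma^{2i}\Omega(\mathsf{C}_{(1^{r-2})}\circ\mathsf{C}_{(1^{2i+1})})$. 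The new factor $\mathsf{C}_{(1^{2i+1})}$ records one new marking placed on an input flag at each of the $2i+1$ neutral vertices of a cycle of length $2i+2$.
\item Proving this needs the actual mechanism. You must discard the acyclic class-{\ttfamily b} graphs and map to the total tree complex by the even-cycle contraction $\eta$. You then filter the cone so that its associated graded pieces are the reflection coinvariants $\Pi(c-1)_{\mathbb{Z}_2}$ of permutohedral chains. These are acyclic for odd $c$, and for even $c$ carry a single alternating class matched by the alternating new markings.
\end{itemize}
Without knowing which $\mathsf{C}_\lambda$ occur in $\mathsf{gr}_1$, you have no multiplicities to feed into the $D_{12}$ character comparison, so (2) is not established.

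Part (1) has a smaller but real omission. Your $\mathsf{gr}_2$ step uses only the genus-two vertex. However, the trees with two genus-one vertices form a subcomplex $\mathsf{T}^{1,1}$ contributing $\Omega(\mathsf{C}_{(1^r)}\circ\mathsf{C}_{(1^s)})$ for even $r\geq s\geq 2$. One gets this by contracting the edge between adjacent special vertices (longer separations cancel) and $S_2$-symmetrizing when $r=s$. When $d\equiv 2\bmod 4$ this is the only contribution. When $d\equiv 0\bmod 4$ you also need the connecting map of $0\to\mathsf{T}^{1,1}\to\mathsf{T}\to\mathsf{T}^{2}\to 0$, which by \cite{CHKV} cancels one copy of $\Phi[n,(r,r)^\ast]$. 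Without these the counts fail. For example, $\Phi[n,(1^4)]$ receives nothing from the genus-two vertex (there are no cusp forms of weight $6$), yet it must occur once.

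Your account of $\mathsf{W}_\pm$ is also not what happens; it is not a $\mathsf{C}_{1^m}$-versus-$\widetilde{\mathsf{C}}_{1^m}$ effect. It is the numerical defect of exactly one copy of $\Phi[n,(1^{a})]$ for each even $a$. In (1) this is because there is no $s=0$ term. In (2) it is because the $i=0$ copy of $\mathsf{C}_{(1^{r-1})}$ is cancelled against the tadpole summand by $d_R$.

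Finally, the paper does not prove (3) by this route at all: it quotes \cite{BCGY}, which needed a new chain model for the one-point compactification. Your $\mathsf{gr}_0$ sketch, whose vertices carry only trivial $\mathsf{Com}$ labels, does not replace that input.
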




We have collected these statements together to highlight their uniformity, but we hasten to add that $\mathsf{gr}_0(\mathsf{FT}_\Gamma)$ can be identified with commutative graph homology, after which statement (3) was proven in \cite{BCGY} using a novel chain model for the one point compactification of $F(S^1\vee S^1,n)$.  We will prove statement (1) directly as an application of Lemma $\ref{decomplem}$.  The most novel result here, and our primary motivation, is statement (2) which has no precursors that we're aware of.  This statement  is proven as a corollary of a stronger statement (Theorem $\ref{mainthm}$ below) as we now describe.



\subsection{Application to $\mathsf{gr}_{11}(H_c^\ast(\op{M}_{2,n}))$.}

Our interest in the graph complexes $\mathsf{gr}_1(\mathsf{FT}_\Gamma)$ stems in part from their relation to graph complexes developed by Payne and Willwacher \cite{PW} and subsequently by Canning, Larson, Payne and Willwacher \cite{CLPW}, \cite{CLPW2} to model particular weight graded pieces of the compactly supported cohomology of the moduli space $\op{M}_{g,n}$.  In particular, taking the Feynman transform of the cohomology of the modular operad of Deligne-Mumford compactifications $\overline{\op{M}}_{g,n}$, and then restricting to a fixed internal degree gives a graph complex computing $\mathsf{gr}_{r}H^\ast_c(\op{M}_{g,n})$.  When $r=11$, this graph complex necessarily comes with a distinguished vertex of weight 11, and, following \cite{PW}, we use the notation $B(g,n,r)$ for the resulting graph complex. 


In this article we give a decomposition of the graph complex $B(2,n,r)$ in terms of the cobar construction of  $\mathbf{FA}$-modules:

\begin{theorem}\label{mainthm}  For all $n$ and $r\geq 3$ we construct an explicit quasi-isomorphism of $S_n$-modules
$$\Sigma^{r-2}B(2,n,r)  \stackrel{\sim}\hookleftarrow
\Omega(\mathsf{C}_{(2,1^{r-3})})(n) \oplus \ds\bigoplus_{i=1}^{\lfloor(n-r+1)/2\rfloor}\Sigma^{2i}\Omega(\mathsf{C}_{(1^{r-2})}\circ\mathsf{C}_{(1^{2i+1})} )(n). $$
\end{theorem}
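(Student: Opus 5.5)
We would begin by unpacking the Payne--Willwacher complex $B(2,n,r)$ in genus 2. Recall from \cite{PW} that $B(g,n,r)$ is spanned by graphs of total genus $g$ with $n$ legs. Each graph has one distinguished special vertex decorated by an element of $H^{11,0}$ of $\overline{\op{M}}_{1,r}$, i.e.\ a copy of the sign-type representation carried by the $r$ half-edges at that vertex. All other vertices are genus $0$ with trivial decoration, i.e.\ commutative vertices. The differential splits non-special vertices, or splits off commutative vertices from the special one.

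In genus $2$ with a genus $1$ special vertex, the first Betti number of the graph is exactly $1$. Up to the commutative trees hanging off, there are therefore only two shapes of core:
\begin{enumerate}
\item a single self-loop (or cycle through trees) at the special vertex;
\item a cycle not through the special vertex, attached to it by a tree.
\end{enumerate}
Our first step is to filter $B(2,n,r)$ by this core type, and then by the number of special-vertex half-edges lying on the cycle. The commutative trees attached to the remaining legs should assemble, after taking homology of the tree parts, into the cobar construction $\Omega$ of an $\mathbf{FA}$-module, which is a Koszul/cobar-type resolution of $\mathsf{Com}$-right modules.

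Concretely, we would identify the sub-quotients as follows.

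\emph{Special vertex on the cycle.} Contracting the trees adjacent to the special vertex leaves an $S_r$-sign decoration with two half-edges glued into a loop. Antisymmetry together with the loop involution produces, by a Pieri/branching computation, the representation $V_{(2,1^{r-3})}$ on the $r-2$ free half-edges. The trees on these half-edges then give $\Omega(\mathsf{C}_{(2,1^{r-3})})(n)$, with suspension $\Sigma^{r-2}$ accounting for the edge and degree conventions.

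\emph{Special vertex off the cycle.} Here a cycle of length $k$ made of commutative vertices carries its own orientation-type symmetry. Homology of the cycle part (a genus 1 commutative graph complex with legs) is known from the $g=1$ case, Lemma \ref{g1fa}: it is concentrated in odd cycle lengths $2i+1$ with sign representation, giving $\mathsf{C}_{(1^{2i+1})}$. This piece is then composed with the $\mathsf{C}_{(1^{r-2})}$ coming from the special vertex's remaining half-edges.

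Next we would construct explicit chain maps from each summand into $B(2,n,r)$, by sending a decorated forest to the sum of graphs obtained by inserting the appropriate symmetrized core, and check that they commute with differentials. To verify the quasi-isomorphism, we would show that the cokernel is acyclic. Our plan is a spectral sequence on the filtration by the number of vertices on or adjacent to the core. On the $E_1$ page the extra pieces, namely even cycles and the non-$(2,1^{r-3})$ isotypic parts, cancel by contracting-homotopy arguments, which are standard for commutative hairs on a cycle. The bound $i\le\lfloor (n-r+1)/2\rfloor$ arises because $2i+1+(r-2)\le n$ legs are needed for the corresponding $\Omega$ to be nonzero.

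\emph{Main obstacle.} We expect the hardest step to be the interaction between the two core types: the differential can move the cycle onto or off the special vertex, so the filtration is not split. Proving that the inclusion is a quasi-isomorphism requires showing that this connecting part kills exactly the complementary representations. We would handle this by a careful $S_r$-representation count (Pieri rule for restricting the sign representation through a loop), together with an explicit homotopy on the filtration.
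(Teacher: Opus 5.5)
\textbf{The proposal has a genuine gap: it attributes the summands to the wrong graphs, and the computations you cite to support that attribution do not hold.}

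\textbf{Off-cycle graphs are cancelled.} Graphs whose distinguished vertex lies \emph{off} the cycle do not produce the summands $\Sigma^{2i}\Omega(\mathsf{C}_{(1^{r-2})}\circ\mathsf{C}_{(1^{2i+1})})$. In the dual complex $L(n,r)=B(2,n,r)^\ast$, these graphs lie in an acyclic subcomplex. That subcomplex also contains the on-cycle graphs with more than $r$ markings, and those with an unmarked cycle edge at the distinguished vertex. A filtration splits it into pieces that are augmented chains on a point or an interval. For example, a graph whose distinguished vertex hangs by its marked output edge off a trivalent cycle vertex cancels against the on-cycle graphs with one adjacent cycle edge marked and with both marked.

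This cancellation uses two features of $B(2,n,r)$ that your description omits:
\begin{itemize}
\item the part of the differential that erases markings;
\item the rule that a marked flag expanded away from the distinguished vertex marks the new edge.
\end{itemize}
Neither appears if the special vertex is modelled as a fixed sign decoration on $r$ half-edges. Consequently $B(2,n,r)$ is quasi-isomorphic to the complex of graphs with the distinguished vertex on the cycle, exactly $r$ markings, and both adjacent cycle edges marked. Every summand of the theorem is realized there.

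Your appeal to Lemma~\ref{g1fa} for an off-cycle core also does not apply. That lemma describes the $\mathbf{FA}$-module $H^\ast(\Gamma_{1,\ast})$ of Lie graph homology, and it gives $\widetilde{\mathsf{C}}_{(1^{m+1})}$ rather than $\mathsf{C}_{(1^{m+1})}$. It is not a computation of a commutative cycle with hairs.

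\textbf{The missing mechanism for $i\geq 1$.} The $i\geq 1$ summand comes from cycles of \emph{even} length $2i+2$ passing \emph{through} the distinguished vertex.
\begin{itemize}
\item The $r-2$ off-cycle markings give $\mathsf{C}_{(1^{r-2})}$.
\item The $2i+1$ neutral cycle vertices give $\mathsf{C}_{(1^{2i+1})}$, through a new alternating (``blue'') marking that picks one input flag at each neutral cycle vertex.
\item The comparison map, in the dual picture, collapses the cycle to a vertex and sums over these choices.
\end{itemize}
The key computation concerns contractions of neutral cycle edges on a cycle with $m$ neutral vertices. These form the complex of ordered set partitions of $\{1,\dots,m\}$ modulo reversal, i.e.\ permutohedral chains modulo $\mathbb{Z}_2$. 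Its homology is $sgn_m$ in top degree if $m$ is odd, and $0$ if $m$ is even. So odd cycles of length at least $3$ are acyclic and even cycles carry the homology, which is the reverse of your claim that even cycles cancel.

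\textbf{The $(2,1^{r-3})$ summand.} Note that $(2,1^{r-3})$ is a partition of $r-1$. So it cannot come from a loop at a sign-decorated vertex, which leaves at most $sgn_{r-2}$ on the remaining $r-2$ half-edges. It arises instead from the interplay of $2$-cycles and marked tadpoles:
\begin{itemize}
\item tadpoles give trees with $r-1$ markings;
\item $2$-cycles give trees with one blue and $r-2$ red markings, i.e.\ $\mathsf{C}_{(1)}\circ\mathsf{C}_{(1^{r-2})}=\mathsf{C}_{(2,1^{r-3})}\oplus\mathsf{C}_{(1^{r-1})}$;
\item the differential from $2$-cycles to tadpoles recolours the blue marking red, which cancels the $\mathsf{C}_{(1^{r-1})}$ part and leaves $\Omega(\mathsf{C}_{(2,1^{r-3})})$.
\end{itemize}

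Without three ingredients the proposal has no working route to the stated decomposition: acyclicity of the off-cycle and extra-marking graphs, the even-cycle collapse with alternating markings, and the permutohedron computation.
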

One consequence of this theorem, when combined with statement (3) of Theorem $\ref{ftthm}$ above, is that for both $i=0$ and $i=11$, $\mathsf{gr}_{i}(H_c^\ast(\op{M}(2,n))$ can be decomposed into direct sums built (with distinct multiplicities) from the same irreducible pieces, namely the $\Phi[n,(a,b)^\ast]$, where $\ast$ denotes the conjugate partition of $(a,b)$.

Using the description of $\Phi[n,(1^m)]$ in terms of the Whitehouse modules, establishes the following weight 11 analog of  \cite[Theorem 1.1]{GH}:

\begin{corollary}\label{nonzero0}
	For every $n\geq 13$ the graded $S_n$-representation $\mathsf{gr}_{11} H_c^{n+\ast}(\op{M}_{2,n})$ contains the (non-graded) 
	$S_n$-representation $$\bigoplus_{\substack{0\leq j \leq n-12 \\ j \ \equiv \ n \text{ mod 2} }} H_{2j}(F(\mathbb{R}^3,n-1))^{\oplus 2}\tensor sgn_n\hookrightarrow \mathsf{gr}_{11} H_c^{n+3}(\op{M}_{2,n})  $$ 
	and
	$$\bigoplus_{\substack{0\leq j \leq n-13 \\ j \ \not\equiv \ n \text{ mod 2} }} H_{2j}(F(\mathbb{R}^3,n-1))^{\oplus 2}\tensor sgn_n\hookrightarrow \mathsf{gr}_{11} H_c^{n+2}(\op{M}_{2,n}).  $$ 
\end{corollary}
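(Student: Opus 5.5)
We will deduce Corollary \ref{nonzero0} from Theorem \ref{mainthm} with $r=11$, using Theorem \ref{thm1} to rewrite the homology of each cobar summand in terms of the coefficients $\Phi[n,\lambda]$. By Payne--Willwacher, $B(2,n,11)$ computes $\mathsf{gr}_{11}H_c^\ast(\op{M}_{2,n})$, up to a fixed degree shift that we will track from the conventions of \cite{PW}. Theorem \ref{mainthm} identifies $\Sigma^{9}B(2,n,11)$, up to quasi-isomorphism, with the direct sum of the following pieces:
\begin{itemize}
\item the summand $\Omega(\mathsf{C}_{(2,1^{8})})(n)$;
\item the summands $\Sigma^{2i}\Omega(\mathsf{C}_{(1^{9})}\circ\mathsf{C}_{(1^{2i+1})})(n)$ for $1\le i\le \lfloor (n-10)/2\rfloor$.
\end{itemize}
Since this is a direct sum, any subrepresentation of the homology of one summand embeds into $\mathsf{gr}_{11}H_c^\ast(\op{M}_{2,n})$. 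We therefore only need to locate the stated pieces inside the homology of the second family of summands.

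The first step is to understand $\mathsf{C}_{(1^{9})}\circ\mathsf{C}_{(1^{2i+1})}$. We expect it to be induced from a representation of the form $V_{1^9}\tensor V_{1^{2i+1}}$ inside $\mathsf{Inj}_{2i+10}$. By Pieri's rule this decomposes as $\bigoplus_\lambda \mathsf{C}_\lambda$, where $\lambda$ runs over partitions with at most two columns, i.e. conjugates of $(a,b)$. The term relevant here is $\lambda=(1^{2i+10})$.

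The second step applies Theorem \ref{thm1}, together with the discussion following it, to write the homology of each summand as $\Phi[n,\lambda]$. For $\lambda=(1^m)$ we use Example \ref{WHex}, which expresses $H^\ast(\Omega(\mathsf{C}_{1^m}))$ in terms of $H^\ast(\Omega(\widetilde{\mathsf{C}}_{1^{m}}))$ and $H^\ast(\Omega(\widetilde{\mathsf{C}}_{1^{m+1}}))$. These homologies are Whitehouse-type modules. They can be identified, after twisting by $sgn_n$, with $H_{2j}(F(\mathbb{R}^3,n-1))$ in the relevant degrees; this matches the analogous computation in \cite[Theorem 1.1]{GH}.

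Reindexing by $m=2i+10$ gives the parity constraint $j\equiv n$ or $j\not\equiv n \pmod 2$, together with the bounds $j\le n-12$ and $j\le n-13$. We then combine the cobar degree, the shift $\Sigma^{2i}$, and the shift $\Sigma^{9}$ to confirm that everything lands in total degrees $n+3$ and $n+2$. The multiplicity $2$ should come either from two Pieri constituents or from the two adjacent modules $\widetilde{\mathsf{C}}_{1^m}$ and $\widetilde{\mathsf{C}}_{1^{m+1}}$ both contributing.

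We expect the main obstacle to be the degree and index bookkeeping: making $j$, $m$, $i$ and all the shifts agree exactly with the stated ranges and with the factor $2$. The first thing to try is to write the Whitehouse description of $\Phi[n,(1^m)]$ explicitly as $sgn_n\tensor H_{2j}(F(\mathbb{R}^3,n-1))$ with $j=n-m$ or so, and then solve for $i$. The hypothesis $n\ge 13$ then guarantees that the index set is nonempty.
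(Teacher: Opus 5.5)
Your route matches the paper's: set $r=11$ in Theorem \ref{mainthm}, keep only the sign constituent $\mathsf{C}_{(1^{2i+10})}$ of each $\mathsf{C}_{(1^{9})}\circ\mathsf{C}_{(1^{2i+1})}$, read off $\Phi[n,(1^{2i+10})]$ from Example \ref{WHex}, and transport the result through Payne--Willwacher. There is, however, a genuine gap in how you account for the factor ${}^{\oplus 2}$, and neither of your two proposed mechanisms can produce it.

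\begin{itemize}
\item \textbf{Pieri does not double it.} By Frobenius reciprocity, $V_{1^{2i+10}}$ occurs in $V_{1^9}\circ V_{1^{2i+1}}$ with multiplicity exactly one, since $\langle sgn\boxtimes sgn, sgn\boxtimes sgn\rangle=1$.
\item \textbf{The two Whitehouse pieces do not double it.} The two pieces of $\Phi[n,(1^m)]\tensor sgn_n$ are $H_{2(n-m)}(F(\mathbb{R}^3,n-1))$ in degree $n-m$ and $H_{2(n-m-1)}(F(\mathbb{R}^3,n-1))$ in degree $n-m-1$. They sit in different degrees, so they give the two separate displayed injections, not a doubling inside either one.
\item \textbf{Summing over $i$ does not double it.} Distinct $i$ give distinct $j$ in each degree, so no repeats arise.
\end{itemize}

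So if you treat the Payne--Willwacher comparison as a pure degree shift, as you do, each $H_{2j}(F(\mathbb{R}^3,n-1))\tensor sgn_n$ appears exactly once. The missing ingredient is that the comparison is $\mathsf{gr}_{11}H_c^\ast(\op{M}_{2,n})\cong\Sigma^{22}H^\ast(B(2,n,11))\tensor\Delta$, where $\Delta=H^{11}(\overline{\op{M}}_{1,11})$ is two-dimensional. This tensor factor is the sole source of the ${}^{\oplus 2}$.

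For the bookkeeping you flagged as the main obstacle, write $m=2i+10$.
\begin{itemize}
\item \textbf{Degrees.} The summand $\Sigma^{2i}\Omega(\mathsf{C}_{(1^m)})(n)$ has cohomology in degrees $n-m+2i=n-10$ and $n-11$. Undoing $\Sigma^{9}$ places these in degrees $n-19$ and $n-20$ of $B(2,n,11)$. The shift $\Sigma^{22}$ then moves them to $n+3$ and $n+2$.
\item \textbf{Degree $n+3$.} Here $j=n-m=n-10-2i$. For $1\le i\le\lfloor (n-10)/2\rfloor$ this runs over $0\le j\le n-12$ with $j\equiv n$ mod $2$.
\item \textbf{Degree $n+2$.} Here $j=n-11-2i$, which runs over $0\le j\le n-13$ with $j\not\equiv n$ mod $2$. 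The value $j=-1$, occurring when $m=n$, is the zero term in Example \ref{WHex}.
\end{itemize}
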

This result, combined with a bit of analysis of the Theorem in borderline cases tells us precisely when $\mathsf{gr}_{11} H_c^j(\op{M}_{2,n})$ is or is not zero;

\begin{corollary}\label{nonzerocor0}
	$\mathsf{gr}_{11} H_c^j(\op{M}_{2,n}) \neq 0$ if and only if $n\geq 13$ and $j \in \{n+2,n+3\}$ or $10\leq n \leq 12$ and $j=n+3$.  In particular, $\mathsf{gr}_{11} H_c^{n+1}(\op{M}_{2,n})=0$ for all $n$.
\end{corollary}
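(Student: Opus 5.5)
The plan is to read off Corollary \ref{nonzerocor0} from Theorem \ref{mainthm} with $r=11$, together with Corollary \ref{nonzero0} and the vanishing ranges of the coefficients $\Phi[n,\lambda]$. By Theorem \ref{mainthm}, $\mathsf{gr}_{11}H_c^\ast(\op{M}_{2,n})$ is, up to the shift $\Sigma^{9}$ and the standard degree conventions relating $B(2,n,11)$ to compactly supported cohomology, the sum of $H^\ast(\Omega(\mathsf{C}_{(2,1^{8})})(n))$ and of $\Sigma^{2i}H^\ast(\Omega(\mathsf{C}_{(1^{9})}\circ\mathsf{C}_{(1^{2i+1})})(n))$ for $1\le i\le\lfloor(n-10)/2\rfloor$. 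By Theorem \ref{thm1}, the first summand is $\Phi[n,(2,1^8)]$. For the others, I will first show that $\mathsf{C}_{(1^9)}\circ\mathsf{C}_{(1^{2i+1})}$ decomposes as a sum of modules $\mathsf{C}_\lambda$ with $|\lambda|=2i+10$, namely $\lambda=(2,1^{2i+8})$ and $(1^{2i+10})$ (Pieri-type rule for the two column shapes, after symmetrizing). Theorem \ref{thm1} and Example \ref{WHex} then turn every summand into $\Phi[n,\lambda]$ for $\lambda$ a column or a hook with second part $2$.

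The nonvanishing direction follows from Corollary \ref{nonzero0} for $n\ge 13$ and $j\in\{n+2,n+3\}$. For $10\le n\le 12$ the sum over $i$ is empty, so only $\Phi[n,(2,1^8)]$ remains. I will show that this is nonzero exactly in the degree giving $j=n+3$. To do this, I will compute the Euler characteristic, or equivalently the top-degree piece, of $\Omega(\mathsf{C}_{(2,1^8)})(n)$ directly for $n=10,11,12$. When $n=10$ the complex is just $\mathsf{C}_{(2,1^8)}(10)$ in a single degree, so it is nonzero there.

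For vanishing, I need concentration results. The main input is that $\Phi[n,\lambda]$ is supported in at most two consecutive degrees, and these are fixed by $n-|\lambda|$ and the form of $\lambda$. This should follow from the description via Whitehouse modules for columns (Example \ref{WHex}, with degrees $n$ and $n-1$) and, for the hook, from Equation \ref{statement2} together with the fact that $H_c^\ast(F(R_g,n))$ lies in degrees $n,n+1$ (since $F(R_g,n)$ has dimension $n$ and is noncompact). Tracking the shifts $\Sigma^{9}$ and $\Sigma^{2i}$ through these concentrations, every summand should land in degrees $j\in\{n+2,n+3\}$. In particular, none lands in $j=n+1$ or below.

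The main obstacle is this degree bookkeeping, especially for the hook summand $\Phi[n,(2,1^8)]$ in the borderline cases $n\le 12$. There I must rule out a class in $j=n+2$, and I also need to handle the case $n<10$, where everything is zero because $\Phi[n,m]=0$ for $n<m$. I would settle these borderline cases by an explicit small computation of $\Omega(\mathsf{C}_{(2,1^8)})(n)$ for $n=11,12$, checking that the differential out of the bottom term is injective.
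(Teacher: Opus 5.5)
\textbf{Main gap: the case $n=12$.} This is the one case the paper has to work for. For $n=10,11$ the paper simply cites \cite{PW}, and your direct argument there is fine. For $n=11$, injectivity of the differential out of the corolla follows from simplicity of $\mathsf{C}_{(2,1^8)}$: the joint kernel of the merge maps $\mathsf{C}_{(2,1^8)}(11)\to\mathsf{C}_{(2,1^8)}(10)$ would generate a sub-$\mathbf{FA}$-module vanishing in arity $10$.

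For $n=12$ two things go wrong.
\begin{enumerate}
\item The sum over $i$ is not empty, since $\lfloor(12-10)/2\rfloor=1$. The $i=1$ term $\Sigma^2\Omega(\mathsf{C}_{(1^9)}\circ\mathsf{C}_{(1^3)})(12)$ contributes $V_{(2^3,1^6)}\oplus V_{(2^2,1^8)}\oplus V_{(2,1^{10})}\oplus V_{(1^{12})}$ in the top degree. This is harmless and even gives nonvanishing at $j=15$.
\item The real problem is the bottom degree. Excluding $j=n+2=14$ amounts to $H^1(\Omega(\mathsf{C}_{(2,1^8)})(12))=0$, i.e.\ $\Phi[12,(2,1^8)]$ concentrated in its top degree $2$. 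Here the complex has $\dim C^0=594$, $\dim C^1=8514$ and $\dim C^2=19305$.
\end{enumerate}
Injectivity of the differential out of the bottom (corolla) term only kills $H^0$, which the degree concentration already forces. The Euler characteristic, $11385$, only yields $\dim H^2-\dim H^1$. So neither check you propose rules out $H^1$. You would need the full rank of $d\colon C^1\to C^2$ (not a small computation) or outside input.

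The paper uses outside input. By the representation stability of \cite{FW}, the complexes $B(2,n,n-1)^\ast$ stabilize sharply at $n=8$, so it suffices to treat $B(2,8,7)$. Theorem \ref{decompthm} expresses $B(2,8,7)$ via $\Phi[8,(2,1^4)]$ and terms $\Phi[8,\mu]$ with $|\mu|=8$, which sit in a single degree. The computer calculation of \cite{GH} then shows $\Phi[8,(2,1^4)]$ is concentrated in degree $2$.

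\textbf{Two smaller errors (fixable).}
\begin{enumerate}
\item The Pieri rule gives $\mathsf{C}_{(1^9)}\circ\mathsf{C}_{(1^{2i+1})}=\bigoplus_{b=0}^{\min(9,2i+1)}\mathsf{C}_{(2^b,1^{2i+10-2b})}$, not only $\mathsf{C}_{(2,1^{2i+8})}\oplus\mathsf{C}_{(1^{2i+10})}$. So you need degree bounds for all two-column shapes, not just columns and hooks.
\item $H_c^\ast(F(R_g,n))$ lies in degrees $n-1$ and $n$, not $n$ and $n+1$. The dimension of $F(R_g,n)$ gives only the upper bound. The lower bound follows by splitting $F(R_g,n)$ into the open part where no point sits at the wedge point (a disjoint union of open $n$-cells) and its closed complement (a disjoint union of open $(n-1)$-cells).
\end{enumerate}
With the corrected degree range, \eqref{statement2} applied with $g\ge\lambda_1$ shows that every $\Phi[n,\lambda]$ is concentrated in degrees $n-|\lambda|-1$ and $n-|\lambda|$. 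This handles all summands at once, and it is what makes the vanishing outside $j\in\{n+2,n+3\}$ an immediate consequence of Theorem \ref{decompthm}, as in the paper.
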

To the best of our knowledge, the vanishing of $\mathsf{gr}_{11} H_c^{n+1}(\op{M}_{2,n})=0$ is a new result.

\subsection{Connections and Future Directions}


We originally derived the results of this paper in the language of modules over (cyclic) operads, but were inspired to use the $\mathbf{FA}$-module language after encountering \cite{CLPW2},  
which describes the $\mathbf{FA}$-structure arising on the holomorphic forms $H^{k,0}(\overline{\op{M}}_{g,n})$ for low $k$.  
For the reader familiar with \cite{CLPW2} we point out that our Theorem $\ref{mainthm}$ could be rephrased in their notation to relate the graph complex which they denote by $G_{\tilde{1}^m}(1,n)$ to a direct sum of the graph complexes which they denote by $G_\lambda(0,n)$.  Although we've stated applications to weight 11,  our Theorem $\ref{mainthm}$ is valid for all $r$, so can likewise be applied to study the cohomology of the Feynman transform of the holomorphic $p$ forms, $\mathsf{gr}_{p,0}H^\ast_c(\op{M}_{2,n})$, in the cases $p=15,17,19$ after \cite{CLPW,CLPW2}.


Within this context, a novel feature of Theorem $\ref{mainthm}$ is that it achieves a reduction in the genus of a graph complex at the expense of trading the simple $\mathbf{FA}$-module $\widetilde{\mathsf{C}}_{(1^m)}$ for more general $\mathbf{FA}$-modules $\mathsf{C}_{\lambda}$. 
Such relationships are expected when considering 
the analog of Massey products in the modular operad setting  \cite{WardMP}, where higher operations are indexed by contractions of subgraphs which in general will change the genus of the graph.  


Our results, particularly Theorem $\ref{ftthm}$, suggest such Massey products can be chosen to be compatible with the $\mathbf{FA}$-module structure.  If this were known to be true, 
it would be possible to conclude that any two of the statements in Theorem $\ref{ftthm}$ imply the third.  And while this intuition certainly helped us generate the statements, we found it easier to prove the two new statements directly.  That said, the relationship between the modular operadic Massey products and the $\mathbf{FA}$-module structure is a very interesting direction for future study.  While we have no reason to believe that $\mathbf{FA}$-modules associated to Lie graph homology can be decomposed over the $\mathsf{C}_\lambda$ in genus $g\geq 3$, it is formally true that the Feynman transform of Lie graph homology with its Massey products is acyclic, and so similar reductions in genus should be possible.



 One area for future study is to consider the interaction between the Lie module structure on $\Omega(\mathsf{C}_\lambda)$ and the injections arising from the input $\mathbf{FA}$-module.  Note that this question is quite subtle, as the images of the injections do not commute with the cobar differential.  

While this naive $\mathbf{FI}$-module 
is not dg, there is another $\mathbf{FI}$-module we can consider by letting the partition and the arity grow in tandem.  This was illustrated in \cite{FW} for the family of partitions $\lambda=(1\cdc 1)$, which in turn established representation stability for the graph complexes $B(g+1,n,r)$ (equivalently $G_{\tilde{1}^r}(g,n)$).   We expect a generalization of these representation stability results when the family of simple $\mathbf{FA}$-modules $\widetilde{\mathsf{C}}_{1^r}$ is replaced with suitable stable families of $\mathsf{C}_\lambda$, modelled combinatially by adding marked legs to the distinguished vertex of $\Omega(\mathsf{C}_\lambda)$. 
 We save an investigation of these representation stability phenomena and their computational implications for future work.

\tableofcontents
{\bf Notation:} Throughout we work in the category of differential graded vector spaces over $\mathbb{Q}$.  For $n\geq 1$, write $S_n$ for the symmetric group of permutations of the set $\bar{n}:=\{1\cdc n\}$.  We consider $\bar{0}=\emptyset$.  For $k\in\mathbb{Z}$ we write $\Sigma^k$ for the degree shift operator on graded vector spaces, with convention $(\Sigma^k V)_{k+i} = V_i$.  If $\lambda$ is a partition of $m$ we write $|\lambda|=m$.  We write $\lambda^\ast$ for the conjugate partition.

\section{$\mathbf{FA}$-modules, the cobar construction and configuration spaces.}

In this section we fix some necessary background material on $\mathbf{FA}$-modules (following \cite{Powell, CLPW2}) and on the compactly supported cohomology of configuration spaces (following \cite{Pet20}, \cite{GH}).  We then connect these concepts via the cobar construction, establishing Theorem $\ref{thm1}$.


We use the following notation for four categories whose objects are finite sets:
\begin{itemize}
	\item $\mathbf{FB}$ is the category of finite sets and bijections,
	\item $\mathbf{FI}$ is the category of finite sets and injections,
	\item $\mathbf{FS}$ is the category of finite sets and surjections,
	\item $\mathbf{FA}$ is the category of finite sets and all set maps.
\end{itemize}
 Let $\mathbf{FX}$ denote any of the four categories above (so  $\mathbf{X}\in \{\mathbf{B},\mathbf{I},\mathbf{S},\mathbf{A} \}$).  We define an $\mathbf{FX}$-module to be a (covariant) functor from $\mathbf{FX}$.  In this paper $\mathbf{FX}$-modules will take values in the category of vector spaces, dg vector spaces, $\mathbb{Q}[G]$-modules (for a group $G$) or the category of groups.  We similarly define an $\mathbf{FX}^{op}$-module to be a contravariant functor from $\mathbf{FX}$ (or equivalently a covariant functor from $\mathbf{FX}^{op}$).  
 
Each category $\mathbf{FX}$ has a skeleton given by the full  subcategory consisting of the objects $\{ \bar{n} \ | \ n\in\mathbb{N} \}$.  Let us denote this skeleton by $\mathbf{NX}\subset \mathbf{FX}$.  We recall that the canonical restriction $\mathbf{FX}\text{-mod} \to\mathbf{NX}\text{-mod}$ is an equivalence of categories with inverse given by Kan extension.  We will typically describe $\mathbf{FX}$-modules simply in terms of their restriction to $\mathbf{NX}$ without further ado.  For example, an $\mathbf{FB}$-module $A$ (in Vect) is nothing more than a sequence of $S_n$-modules $A(n)$, also referred to as a symmetric sequence.


\subsection{Necessary Examples of $\mathbf{FA}$-modules.}
In this subsection we give several examples of  $\mathbf{FA}$-modules which will be used to state our main results.  We also give several elementary results relating these examples.

\begin{example}\label{comalg}  Let $A$ be a commutative, associative algebra with unit $e$.  Define a graded $\mathbf{FA}$-module $A^{\tensor}$ as follows.  On objects, declare $A^{\tensor}(\bar{n}) := A^{\tensor n}$.  On morphisms, given $f\colon \bar{n}_1\to \bar{n}_2$, we define $A^{\tensor n_1}\to A^{\tensor n_2}$ on pure tensors by declaring the entry in a position $j$ of the target to be the product of the elements appearing in entries $f^{-1}(j)$ in the source. The empty product is interpreted as $e$.  
\end{example}

\begin{example}\label{gralg}
Now suppose $A$ is graded commutative.  The definition in Example $\ref{comalg}$ extends to this case, provided that $A^{\tensor}(f)$ includes the Koszul sign arising from applying any permutation $\sigma$ satisfying $\sigma(i_1) < \sigma(i_2)$ if $f(i_1)< f(i_2)$, which in turn specifies the order of multiplication of each fiber.  Graded commutativity of the multiplication ensures the result is independent of the choice of such a $\sigma$.   In the graded case, for each integer $d$, there is an $\mathbf{FA}$-module given by restricting each tensor power $A^{\tensor n}$ to its degree $d$ summand.  We denote this $\mathbf{FA}$-module by $(A^{\tensor})_d$.  

We further remark that if $A$ is non-unital as a graded commutative algebra , the formula for $A(f)$ is still well-defined for surjections, and we may view $A^{\tensor}$, as well as each $(A^{\tensor})_d$, as an $\mathbf{FS}$-module.

\end{example}

\begin{example}\label{mex} For $m\geq 1$, define a symmetric sequence  $\mathsf{Inj}_m$ by letting $\mathsf{Inj}_m(n)$ be the span of the set of injections from $\{1\cdc m \} \hookrightarrow \{1\cdc n\}$.  We give $\mathsf{Inj}_m(n)$ the structure of an $\mathbf{FA}$-module as follows.  Given a set map $f\colon \bar{n}_1\to\bar{n}_2$ we define $$\mathsf{Inj}_m(n_1)\stackrel{f_\ast}\longrightarrow\mathsf{Inj}_m(n_2)$$ on a basis vector (injective map) $\phi$ by declaring $f_\ast(\phi)$ to be $f\circ\phi$ if this composition is an injection, and $0$ if it is not.  The verification that this assignment is compatible with composition of morphisms is straight-forward.  In the case $m=0$ our convention is to define $\mathsf{Inj}_0$ to be the constant functor $\mathbb{Q}$, with trivial $S_n$ action.	
\end{example}

\begin{example}
The $\mathbf{FA}$-module $\mathsf{Inj}_m$ may be viewed as taking values in the category of $S_m$-modules, via the permutation action on the source of the injections.  Formally, we view $\mathsf{Inj}_m$ as a  right $\mathbb{Q}[S_m]$-module via composition of functions; $\phi\cdot \sigma :=  \phi \circ \sigma$. As such, if $V$ is any (left) $S_m$-module, we may form a new $\mathbf{FA}$-module, denoted $\mathsf{Inj}_m\tensor_{S_m}V$, by composition of functors $(-\tensor_{S_m} V)\circ\mathsf{Inj}_m$.  

\end{example}

Let us record a relationship between the above mentioned examples of $\mathbf{FA}$-modules for future use.  Recall $R_g$ denotes a wedge of $g$ circles.  We regard $H^\ast(R_g)^{\tensor}$ as an $\mathbf{FA}$-module after Example $\ref{gralg}$.  We also write $\tilde{H}^\ast(R_g)$ for the reduced cohomology.

\begin{lemma}\label{eqfa}  There is an isomorphism of graded $\mathbf{FA}$-modules 
\begin{equation}\label{sw}
	H^\ast(R_g)^{\tensor } \cong \bigoplus_{m}\textsf{Inj}_m\tensor_{S_m} \tilde{H}^{\ast}(R_g)^{\tensor m}.
\end{equation}
\end{lemma}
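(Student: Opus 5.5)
The plan is to write down an explicit map and check three things in turn: that it is a linear isomorphism in each arity, that it is $S_n$-equivariant, and that it commutes with arbitrary set maps. Write $A=H^\ast(R_g)=\mathbb{Q}e\oplus \tilde{A}$ with $\tilde{A}=\tilde{H}^\ast(R_g)$. The multiplication on $A$ has $\tilde{A}\cdot\tilde{A}=0$, because $\tilde{H}^\ast(R_g)$ is concentrated in degree $1$ and $H^2(R_g)=0$.

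\textbf{The map.} For a basis injection $\phi\colon\bar m\hookrightarrow\bar n$ and $a_1\tensor\cdots\tensor a_m\in\tilde{A}^{\tensor m}$, define
\[
\Psi_n(\phi\tensor a_1\tensor\cdots\tensor a_m)
\]
to be the pure tensor in $A^{\tensor n}$ with $a_i$ in position $\phi(i)$ and $e$ in every position outside the image of $\phi$. The Koszul sign is the one obtained by reordering $(a_1,\dots,a_m)$ according to the order of $\phi(1),\dots,\phi(m)$. Equivalently, $\Psi_n(\phi\tensor a)=A^{\tensor}(\phi)(a)$, where $A^{\tensor}(\phi)$ is the structure map of Example \ref{gralg} applied to $a\in\tilde{A}^{\tensor m}\subset A^{\tensor m}$. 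This reformulation makes the sign conventions automatic.

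\textbf{Step 1: the map descends and is bijective.} Since $\Psi_n(\phi\tensor a)=A^{\tensor}(\phi)(a)$ and $A^{\tensor}$ is a functor, we get $\Psi_n(\phi\circ\sigma\tensor a)=\Psi_n(\phi\tensor\sigma\cdot a)$. So $\Psi_n$ descends to $\mathsf{Inj}_m(n)\tensor_{S_m}\tilde{A}^{\tensor m}$. Expanding $A^{\tensor n}=(\mathbb{Q}e\oplus\tilde{A})^{\tensor n}$ gives a direct sum indexed by subsets $S\subseteq\bar n$, namely the positions carrying $\tilde{A}$. The summand for $|S|=m$ is identified with the $S_m$-coinvariants of the free part of $\mathsf{Inj}_m(n)$ with image $S$, tensored with $\tilde{A}^{\tensor m}$. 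Hence each $\Psi_n$ is an isomorphism. Equivariance under $S_n$ is again functoriality of $A^{\tensor}$ applied to $\tau\circ\phi$.

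\textbf{Step 2: naturality for arbitrary set maps.} This is the main point. Take $f\colon\bar n_1\to\bar n_2$. By functoriality,
\[
A^{\tensor}(f)\,\Psi_{n_1}(\phi\tensor a)=A^{\tensor}(f\circ\phi)(a).
\]
We compare this with $\Psi_{n_2}(f_\ast\phi\tensor a)$ in two cases.
\begin{itemize}
\item If $f\circ\phi$ is injective, the two sides agree by definition.
\item If $f\circ\phi$ is not injective, some fiber of $f\circ\phi$ contains at least two indices $i\neq i'$. The corresponding entry of $A^{\tensor}(f\circ\phi)(a)$ is a product containing $a_i a_{i'}\in\tilde{A}\cdot\tilde{A}=0$. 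So the left side vanishes, matching $f_\ast\phi=0$.
\end{itemize}

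The only delicate points are that the Koszul signs agree and that the direct-sum decomposition in Step 1 respects gradings. Both are handled by defining $\Psi$ through $A^{\tensor}(\phi)$ rather than by hand. Finally, the case $m=0$ with $\mathsf{Inj}_0=\mathbb{Q}$ corresponds to the summand $e^{\tensor n}$, which is preserved by every structure map since $e\cdot e=e$.
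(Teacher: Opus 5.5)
Your proposal is correct and follows essentially the same route as the paper: the same map $\phi\tensor\vec{w}\mapsto A^{\tensor}(\phi)(\vec{w})$, descended to $S_m$-coinvariants and shown bijective (you via the subset decomposition of $(\mathbb{Q}e\oplus\tilde{A})^{\tensor n}$, the paper via a dimension count in degree $m$). Your Step 2 usefully spells out what the paper leaves as ``follows directly from the definitions'': when $f\circ\phi$ is not injective, the corresponding terms vanish because $\tilde{A}\cdot\tilde{A}=0$.
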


\begin{proof}  For $m\leq n$ consider the map
	$$\textsf{Inj}_m(n)\tensor\tilde{H}^{\ast}(R_g)^{\tensor m}\to(H^\ast(R_g)^{\tensor n})_m $$
	given by
	$$\phi\tensor \vec{w} \mapsto  A^{\tensor}(\phi)(\vec{w}).$$ 
This map descends to the $S_m$-coinvariants which, upon checking the dimensions, is seen to be an isomorphism.  Verification that this isomorphism preserves the $\mathbf{FA}$-modules structures follows directly from the definitions in Examples $\ref{gralg}$ and $\ref{mex}$.	
\end{proof}

We find it convenient to fix some notation for the previous class of examples so, given $m$, we define $\mathsf{C} = \mathsf{Inj}_m \tensor_{S_m} -$, viewed as a functor from $S_m$-modules to $\mathbf{FA}$-modules. 
For example, if we identify $\tilde{H}^\ast(R_n)=H^1(R_n)$ with the permutation representation $\Sigma P_n$, shifted to formally live in degree $1$, then the previous lemma can be restated in this notation to say
\begin{equation}\label{permex}
 (H^\ast(R_g)^{\tensor })_m = \mathsf{C}((\Sigma P_g)^{\tensor m}).
 \end{equation} 
When $V=V_\lambda$ is an irreducible $S_m$-representation (corresponding to a partition $\lambda$ of $m$) we furthermore define
$$\mathsf{C}_\lambda:=\mathsf{C}(V_\lambda)$$
for the associated $\mathbf{FA}$-module.

\begin{example} We define an $\mathbf{FA}$-module $\tilde{\mathsf{C}}_{1^m}$ as follows.  First note that $\mathsf{C}_{1^m}(m+1) = V_{2,1^{m-1}}\oplus V_{1^{m+1}} $.   This decomposition induces a non-zero map of FA-modules $\mathsf{C}_{1^{m+1}}\to \mathsf{C}_{1^{m}}$ and we define $\tilde{\mathsf{C}}_{1^{m}}$ to be its cokernel.  The $\mathbf{FA}$-module 	$\tilde{\mathsf{C}}_{1^m}$	is simple.  We recall $\mathsf{C}_\lambda$ is simple when $\lambda \neq 1^m$ (\cite{JWG}, \cite{Powell}, \cite{CLPW2}) and moreover that every simple $\mathbf{FA}$-module is either of the form $\tilde{\mathsf{C}}_{1^{m}}$ or $\mathsf{C}_\lambda$ for such a $\lambda$. 
\end{example}

If $V_\mu$ is an $S_n$ representation and $V_\nu$ is an $S_m$ representation, let $V_{\mu}\circ V_\nu$ denote the induced $S_{n+m}$ representation.  It will be convenient to introduce the following notation:
\begin{equation}\label{circnotation}
	\mathsf{C}_{\mu}\circ\mathsf{C}_{\nu}:= \mathsf{C}(V_{\mu} \circ V_\nu).
\end{equation}
We recall that determining the irreducible decomposition of $V_{\mu}\circ V_\nu$, and hence that of $\mathsf{C}_{\mu}\circ\mathsf{C}_{\nu}$ is achieved by the Littlewood-Richardson rule, see e.g.\ \cite[Appendix A]{FH}.  We will make use of the following special case -- if $r\geq s$ then 
$
\mathsf{C}_{(r)}\circ\mathsf{C}_{(s)} = \bigoplus_{j=0}^s \mathsf{C}_{(r+j,s-j)}.
$

\begin{example}  Let $G$ be a group and let $K$ be a $G\times S_m$-module.  View the $\mathbf{FA}$-module $\mathsf{C}(K)$ as taking values in the category of $\mathbb{Q}[G]$-modules.  For each $i$, the group cohomology $H^i(G,-)$ is a functor on the category of $\mathbb{Q}[G]$-modules and so, by composition of functors, we may view $H^i(G,\mathsf{C}(K))$ as an $\mathbf{FA}$-module in vector spaces.
\end{example}

\begin{lemma}\label{pulloutc} There is a canonical isomorphism of $\mathbf{FA}$-modules
		$H^i(G,\mathsf{C}(K)) = \mathsf{C}(H^i(G,K)).$
\end{lemma}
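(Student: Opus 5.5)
We prove the isomorphism levelwise in $n$ and then check naturality in $\mathbf{FA}$. For a fixed $n$ we have
$$\mathsf{C}(K)(n) = \mathsf{Inj}_m(n)\tensor_{S_m} K.$$
Here $G$ acts only through $K$, and $\mathsf{Inj}_m(n)$ is a finite-dimensional right $\mathbb{Q}[S_m]$-module with trivial $G$-action. The goal is therefore to show that the natural comparison map
$$\mathsf{Inj}_m(n)\tensor_{S_m} H^i(G,K)\longrightarrow H^i\bigl(G,\mathsf{Inj}_m(n)\tensor_{S_m}K\bigr)$$
is an isomorphism, and that it commutes with the maps $f_\ast$ for all set maps $f\colon\bar n_1\to\bar n_2$.

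The first step is the basic interchange. Over $\mathbb{Q}$, the functor $M\tensor_{S_m}-$ equals $(M\tensor -)_{S_m}$, the $S_m$-coinvariants of $M\tensor -$. Coinvariants of a finite group in characteristic zero can be identified with invariants, i.e.\ the image of the averaging idempotent. So $M\tensor_{S_m}-$ is an exact additive functor from $G\times S_m$-modules to $G$-modules. It is also a direct summand of $M\tensor -$.

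The second step is to compute $H^i(G,-)$ from a projective resolution $P_\bullet\to\mathbb{Q}$ over $\mathbb{Q}[G]$. For a finite-dimensional vector space $M$ with trivial $G$-action we have
$$\mathrm{Hom}_G(P_\bullet, M\tensor K)\cong M\tensor \mathrm{Hom}_G(P_\bullet,K).$$
This is natural in $M$ and $S_m$-equivariant, with $S_m$ acting on $K$ and on $M$. Applying the $S_m$-invariant idempotent on both sides and using exactness of invariants over $\mathbb{Q}$ gives
$$H^i\bigl(G,(M\tensor K)_{S_m}\bigr)\cong \bigl(M\tensor H^i(G,K)\bigr)_{S_m}.$$
Taking $M=\mathsf{Inj}_m(n)$ gives the isomorphism at level $n$.

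Finally, naturality holds because every $f_\ast$ acts only on the $\mathsf{Inj}_m(n)$ tensor factor, by a linear map commuting with the $S_m$-action, and all identifications above are natural in $M$. The only delicate point is the interchange of cohomology with coinvariants, and this is exactly where characteristic zero is used. Finite-dimensionality of $\mathsf{Inj}_m(n)$ lets us pull $M$ out of the $\mathrm{Hom}$ without worrying about infinite $G$.
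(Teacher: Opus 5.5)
Your proof is correct and follows essentially the same route as the paper. Both arguments resolve $\mathbb{Q}$ by projective $\mathbb{Q}[G]$-modules, use finite-dimensionality of $\mathsf{Inj}_m(n)$ to commute $\mathsf{Inj}_m(n)\tensor_{S_m}-$ past $\mathrm{Hom}_G(P_\bullet,-)$ at the chain level, and then pass to cohomology. The paper phrases the chain-level step as both sides being naturally $\mathrm{Hom}_{G\times S_m}(P_\ast\boxtimes\mathsf{Inj}^\ast_m,K)$; you also make explicit the characteristic-zero exactness of $S_m$-(co)invariants needed in the final step, which the paper leaves implicit.
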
	

\begin{proof} Let $P_\ast$ be a projective resolution of $\mathbb{Q}$ in the category of $\mathbb{Q}[G]$-modules.  Write $\mathsf{Inj}^\ast_m$ for the $\mathbf{FA}^{op}$-module formed by composition of $\mathsf{Inj}_m$ with linear dualization.  Then
$$Hom_{G}(P_\ast, \mathsf{C}(K)) = Hom_{G}(P_\ast, \mathsf{Inj}_m\tensor_{S_m} K) =  Hom_{G}(P_\ast, Hom_{S_m} (\mathsf{Inj}^\ast_m, K)). $$
On the other hand
$$
\mathsf{C}(Hom_{G}(P_\ast,K))=\mathsf{Inj}_m\tensor_{S_m}(Hom_{G}(P_\ast, K)) 
= Hom_{S_m}(\mathsf{Inj}^\ast_m, Hom_{G}(P_\ast,  K) ).$$

And both functors are naturally isomorphic to $Hom_{G\times S_m}( P_\ast\boxtimes \mathsf{Inj}^\ast_m,K).$  Passing to cohomology proves the result.
\end{proof}

\subsection{The Cobar Construction for $\mathbf{FS}$ and $\mathbf{FA}$ modules.}\label{cobarsec}

In this subsection we define the cobar construction for $\mathbf{FS}$-modules. For the reader familiar with operads, we recall that $\mathbf{FS}$-modules are equivalent to comodules over the (non-unital) commutative cooperad \cite{KM,Fresse}, 
 and the (co)bar construction which we define here is simply the translation of the (co)bar construction for (co)modules over (co)operads into the language of $\mathbf{FS}$-modules.

\subsubsection{Conventions for Trees and Graphs}\label{graphs} 
We take the definition of a graph $(V,F,a,\iota)$ as having a non-empty finite set of vertices $V$ and a set of half-edges (or flags) $F$.  Each half-edge is adjacent to a unique vertex, specified by the function $a\colon F\to V$.  The number $|a^{-1}(v)|$ is called the valence of the vertex.   The final datum is an involution $\iota\colon F\to F$; given $h$, if $h\neq \iota(h)$, these two half-edges form an edge, else $h$ is called a leg.  The set of edges is denoted $E$.

Given a graph, we generate an equivalence relation on $V$ by equating $a(\iota(h))\sim a(h)$ over all $h\in F$.  We say the graph is connected if this equivalence relation has one connected component.  A connected graph such that $|V|-|E|=1$ is called a tree.    By an $n$-tree, we refer to a tree with $n$ legs, along with a labeling (aka a bijection) of the legs by the set $\{1\cdc n\}$.

A pointed $n$-tree $(\mathsf{t},v)$ is the data of an $n$-tree $\mathsf{t}$ along with a choice of vertex $v\in V(\mathsf{t})$. We call $v$ the distinguished vertex.  We remark that a pointed $n$-tree is equivalent to what is often called a rooted $n$-tree, thinking of the distinguished vertex as the root vertex, although we don't assume the existence of a root leg. A pointed $n$-tree $(\mathsf{t},v)$ is stable if $|a^{-1}(w)| \geq 3$ for all $w\in V\setminus v$.  In particular, we assert no stability condition at the distinguished vertex.  From here on all pointed $n$-trees are stable unless otherwise stated.  


An isomorphism of graphs is a pair of bijections between the respective vertices and flags which commute with the adjacencies and involutions.  Morphisms of pointed $n$-trees are presumed to preserve both the leg labels and distinguished vertex.  We write $n\text{-Tree}_\ast$ for the set of isomorphism classes of stable pointed $n$-trees.  We will typically denote an isomorphism class simply as $(\mathsf{t},v)$ without further ado.

Finally, we consider edge contraction.  Let $(\mathsf{t},v) \in n\text{-Tree}_\ast$.  For any edge $e\in E(\mathsf{t})$, we form a new stable pointed $n$-tree by contracting $e$.  Specifically this new tree has two fewer flags, and one less vertex than $\mathsf{t}$, the two vertices adjacent to $e$ having been identified.  The property of begin the distinguished vertex is absorbing upon edge contraction.  We call this new pointed $n$-tree $(\mathsf{t}/e, \bar{v})$.

\subsubsection{(Co)bar construction for $\mathbf{FS}$-modules.}

\begin{definition}  Let $A$ be an $\mathbf{FS}^{op}$-module and let $(\mathsf{t},v)\in n\text{-Tree}_\ast$. 
	We define 
	$$
	A(\mathsf{t},v) =  A(a^{-1}(v)).
	$$
\end{definition}
For such an $A$ and $(\mathsf{t},v)$, the $\mathbf{FS}^{op}$-module structure gives us a map $$f_e\colon A(\mathsf{t},v)\to A(\mathsf{t}/e,\bar{v})$$ for each $e\in E(\mathsf{t})$.  Specifically, this map is the identity if $e$ is not adjacent to $v$.  If $e$ is adjacent to $v$ then the edge contraction decomposes $a^{-1}(\bar{v})$ into the disjoint union of those flags which were originally adjacent to $v$ and those which are newly adjacent to $\bar{v}$. 
 This in turn 
 specifies a surjection $a^{-1}(\bar{v}) \to a^{-1}(v)$ fixing the former subset and sending the newly adjacent flags to the flag adjacent to $v$ which belongs to the contracted edge $e$. The map $f_e$ is the image via $A$ of this surjection.

Given an $n$-tree $\mathsf{t}$ we define $det(\mathsf{t})$ to be the top exterior power of the span of the set of edges of $\mathsf{t}$.  This is a one-dimensional vector space concentrated in degree $|E|$.  For each $e\in E(\mathsf{t})$ fix an isomorphism $g_e\colon det(\mathsf{t}) \to det(\mathsf{t}/e)$ by the formula $g_e(x\wedge e) = x$ for all $x \in det(\mathsf{t}/e)$ (identifying $E(\mathsf{t}/e)\cong E(\mathsf{t})\setminus e$).  

\begin{definition}  Let $A$ be an $\mathbf{FS}^{op}$-module and let $(\mathsf{t},v)\in n\text{-Tree}_\ast$.  Associated to this data is a degree $-1$ linear map,
$$
d_e\colon A(\mathsf{t},v)\tensor det(\mathsf{t}) \to A(\mathsf{t}/e,\bar{v})\tensor det(\mathsf{t}/e)
$$
defined by $d_e:= f_e\tensor g_e$, which we call the edge contraction map.
\end{definition}

\begin{definition}  Let $A$ be an $\mathbf{FS}^{op}$-module. The bar construction of $A$, denoted $\mathsf{B}(A)$, is a sequence of chain complexes $\mathsf{B}(A)(n)$.  The underlying graded vector spaces are defined by:
	$$
	\mathsf{B}(A)(n) = \ds\bigoplus_{\substack{ (\mathsf{t},v)} \in n\text{-Tree}_\ast} A(\mathsf{t},v)\tensor det(\mathsf{t}).
	$$
The differential is the linear extension of the map $d_{(\mathsf{t},v)}:= \sum_{e\in E(\mathsf{t})} d_e$.
\end{definition}

We will more often be interested in the cobar construction. Under the assumption that each $A(X)$ is finite dimensional, we define it as follows:

\begin{definition}  Let $A$ be an $\mathbf{FS}$-module.  The cobar construction of $A$, denoted $\Omega(A)$ is defined to be the linear dual of the bar construction of the $\mathbf{FS}^{op}$-module $A^\ast$.
\end{definition}
In particular, 
	$$
\Omega(A)(n) = \ds\bigoplus_{\substack{ (\mathsf{t},v)} \in n\text{-Tree}_\ast} A(\mathsf{t},v)\tensor det(\mathsf{t}),
$$
with differential given by expansion of edges using the $\mathbf{FS}$-module structure.

We remark that $\Omega(A)(n)$ is naturally a symmetric sequence (aka $\mathbf{FB}$-module); the $S_n$ action permutes the leaves of $(\mathsf{t}_1,v_1)$ which, in general, returns a different isomorphism class $(\mathsf{t}_2,v_2)$ as well as a bijection  $a^{-1}(v_1) \to a^{-1}(v_2)$.  We also remark that the operadic perspective implies that the symmetric sequence $\Omega(A)$ assembles to a module over the shifted $L_\infty$ operad.

\begin{definition}  If $A$ is an $\mathbf{FA}$-module, we define the cobar construction of $A$, still denoted $\Omega(A)$, to be the cobar construction of the underlying $\mathbf{FS}$-module.
\end{definition}

In particular, the cobar construction that we consider, while often taking an $\mathbf{FA}$-module as an input, does not use the injections when forming the differential.

\subsubsection{Combinatorial Examples}

One nice feature of the (co)chain complexes $\Omega(\mathsf{Inj}_m)$ and $\mathsf{B}(\mathsf{Inj}^\ast_m)$ is that they may be described completely combinatorially.  For example, each chain complex $\mathsf{B}(\mathsf{Inj}^\ast_m)(n)$ is spanned by pointed $n$-trees $(\mathsf{t},v)$, along with the choice of injection $\phi\colon\bar{m}\to a^{-1}(v)$; we depict this data by writing the number $i$ labeling a tic mark (marking) on the flag $\phi(i)$.  The differential is the sum over edge contractions; if an edge contracts a flag with a labeled marking we sum over ways to distribute this marking to the newly adjacent flags.  
See Figure $\ref{fig:f3}$.

\begin{figure}
	\centering
	\includegraphics[width=1\linewidth]{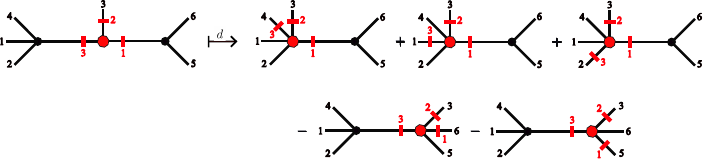}
	\caption{The differential in $\mathsf{B}(\mathsf{Inj}^\ast_3)(6)$ contracts edges and distributes ordered markings.  The signs presuppose that we've ordered the edge containing the tic mark \textcolor{red}{1} before the edge containing the tic mark \textcolor{red}{3}.}
	\label{fig:f3}
\end{figure}

For the $\mathbf{FA}$-modules $\mathsf{C}_\lambda$, there isn't as simple a combinatorial/graphical presentation of the (co)bar construction in general.  But let us take note of several examples which do have a particularly simple description.  First, when $\lambda = (1^m)$ both $\widetilde{\mathsf{C}}_\lambda$ and $\mathsf{C}_\lambda$ have (co)bar constructions which can be described as trees with unlabeled tic marks.  For the former, the combinatorics of the bar construction were described directly in \cite{WardStir};  a weakly equivalent combinatorial complex formed via the Koszul resolution is considered in \cite{PW}.  

There is also a very natural intermediary given by trees with unlabeled markings separated into a fixed number of subsets, which we indicate by markings of different colors.  We will have particular use for the case of two colors, so we make the following definition:


\begin{definition}\label{2colordef} For natural numbers $n,b,r$ we define the chain complex: 
	$$\mathsf{T}(n,b,r) := 
	\mathsf{B}(\mathsf{C}^\ast_{(1^b)}\circ\mathsf{C}^\ast_{(1^r)})(n)$$
with notation $\circ$ as in Equation $\ref{circnotation}$.
\end{definition}
In particular, the chain complex $\mathsf{T}(n,b,r)$ can be described as trees with two colors (blue and red say) of unlabeled markings.  The markings carry degree $-1$ and the alternating $S_b$ (resp. $S_r$) action.  Contraction of an edge containing a marking (of either color) sums over ways to distribute the marking, as above. See the bottom line of Figure $\ref{fig:fig1}$ for an example.

\subsection{Configuration spaces via the cobar construction.}

Finally in this section we reinterpret the results of \cite{Pet20} and \cite{GH} in terms of the cobar construction of $\mathbf{FS}$-modules and $\mathbf{FA}$-modules.

\subsubsection{CE complex}

We write $\mathsf{Lie}$ for the Lie operad and write $\mathsf{sLie}$ for a shift of its operadic suspension, in particular $\mathsf{sLie}(n) := \Sigma^{n}\mathsf{Lie}(n)\tensor sgn_n$ is a graded vector space of dimension $(n-1)$! concentrated in degree $n$, spanned by the set of Lie words on $n$ distinct letters, modulo Jacobi and anti-symmetry relations.

Let $A$ be a dg commutative algebra.  Then $A\tensor \mathsf{sLie}:=\oplus_n (A\tensor \mathsf{sLie}(n))$ is naturally a left module over the Lie operad via the formula $[a_1\tensor \ell_1, a_2\tensor \ell_2] = a_1a_2\tensor (-1)^{|a_2||\ell_1|}[\ell_1,\ell_2]$, where $[\ell_1,\ell_2]$ denotes the operation induced by bracketing two Lie words together, with suitable reindexing.

We may take the Chevalley-Eilenberg chain complex associated to this Lie module $\mathsf{CE}_\ast(A\tensor \mathsf{sLie})$ and observe that the differential $d_{\mathsf{CE}}$ does not alter the total number of letters, $n$, given in an exterior product of Lie words.  Thus, this complex splits over $n$, each component having an $S_n$ action, and we may view $\mathsf{CE}_\ast(A\tensor \mathsf{sLie})$ as a symmetric sequence.  For the grading, we give the exterior factors degree $-1$; in particular if we write
$\mathsf{CE}_p(A\tensor \mathsf{sLie})(n)$ for those chains having $p$ exterior factors and $n$ total letters, we view this vector space as concentrated in degree $n-p$.  So we formally get a cochain complex, opposite to the more standard grading on the $\mathsf{CE}$ chain complex.

\begin{lemma}\label{CEthm}  Let $A$ be a dg commutative algebra, not necessarily unital, and consider $A^{\tensor}$ as an $\mathbf{FS}$-module as in Example $\ref{gralg}$.   Then there is a natural quasi-isomorphism of dg symmetric sequences
	$$\eta\colon\Omega(A^{\tensor})\stackrel{\sim}\to  \mathsf{CE}_\ast(A\tensor \mathsf{sLie}).$$
\end{lemma}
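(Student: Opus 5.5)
We construct $\eta$ explicitly and prove it is a quasi-isomorphism by an arity/degree filtration, with Koszulity of the commutative operad doing the real work. For the map: a summand of $\Omega(A^{\tensor})(n)$ indexed by a pointed tree $(\mathsf{t},v)$ is $A^{\tensor a^{-1}(v)}\tensor det(\mathsf{t})$. Removing the distinguished vertex $v$ leaves one subtree for each flag $h\in a^{-1}(v)$. Each such subtree is a (possibly trivial, i.e.\ a single leg) rooted stable tree with leaves labelled by a subset $S_h\subseteq\bar n$, and these subsets partition $\bar n$. A rooted stable tree decorated with $\det$ of its edges is exactly a cell in the cobar construction of the commutative cooperad, i.e.\ an element of $\Omega(\mathsf{Com}^\vee)(S_h)$.

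There is the standard quasi-isomorphism $\Omega(\mathsf{Com}^\vee)\to \mathsf{sLie}$, Koszul duality between $\mathsf{Com}$ and $\mathsf{Lie}$. It sends binary trees to the corresponding iterated bracket and all non-binary trees to $0$. Define $\eta$ on the summand for $(\mathsf{t},v)$ by
\[ a_{h_1}\tensor\cdots\tensor a_{h_p}\tensor(\text{subtree data}) \longmapsto (a_{h_1}\tensor \ell_{h_1})\wedge\cdots\wedge(a_{h_p}\tensor\ell_{h_p}), \]
with $\ell_h$ the Lie word of the subtree at $h$. The number $p$ of exterior factors is the valence of $v$. The degree bookkeeping (markings of degree $-1$, $\det(\mathsf{t})$ in degree $|E|$, exterior factors in degree $-1$, $\mathsf{sLie}(k)$ in degree $k$) should be checked to match. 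Choosing the Koszul signs here is routine but fiddly.

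To see that $\eta$ is a chain map, split the cobar differential on a summand into two parts. The first expands edges away from $v$, i.e.\ inside the subtrees. It is carried to zero, because the Koszul map kills $d$ of everything: that map is a chain map to $\mathsf{sLie}$, which has zero differential, together with $d_A$. The second expands $v$ itself, splitting off a new edge adjacent to $v$ and using the $\mathbf{FS}$-structure. This multiplies the $A$-entries of the flags grouped onto the new edge. Only the terms that create a new binary vertex survive $\eta$. Those terms pair two flags $h,h'$ with product $a_ha_{h'}$ and bracket $[\ell_h,\ell_{h'}]$, which is precisely the Chevalley–Eilenberg differential $d_{\mathsf{CE}}$. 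Terms grouping three or more flags create a non-binary vertex and map to zero. Thus $\eta\circ d=(d_A+d_{\mathsf{CE}})\circ\eta$. Nonunitality of $A$ causes no problem, since only surjections are used.

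To prove $\eta$ is a quasi-isomorphism, filter both sides by the valence $p$ of the distinguished vertex, equivalently by the number of exterior factors. On the source this filtration is bounded in each arity $n$, and the part of the differential that expands $v$ strictly lowers $p$. On the associated graded, the source becomes
\[ \bigoplus_p \Big(A^{\tensor p}\tensor \bigoplus_{\bar n=S_1\sqcup\cdots\sqcup S_p}\Omega(\mathsf{Com}^\vee)(S_1)\tensor\cdots\tensor\Omega(\mathsf{Com}^\vee)(S_p)\Big)_{S_p}, \]
with differential $d_A$ plus the internal cobar differentials. The target becomes $\bigoplus_p \big(A^{\tensor p}\tensor \mathsf{sLie}^{\tensor p}\big)_{S_p}$ with differential $d_A$ alone, and $\mathrm{gr}\,\eta$ is induced factorwise by the Koszul quasi-isomorphism. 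In characteristic zero, Künneth and exactness of $S_p$-coinvariants make $\mathrm{gr}\,\eta$ a quasi-isomorphism. A spectral sequence comparison, convergent because the filtration is finite for fixed $n$, then gives the result.

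The main obstacle is the sign and degree conventions: verifying that the second part of the cobar differential, with the Koszul signs from $A^{\tensor}(f)$ in Example \ref{gralg} and from $\det(\mathsf{t})$, matches $d_{\mathsf{CE}}$ exactly under $\eta$. I would first fix orientations by ordering the edges of each subtree after a chosen ordering of the flags at $v$, then check the case $p=2\to1$ by hand.
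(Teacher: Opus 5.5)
Your proposal is correct and takes essentially the same approach as the paper. You define $\eta$ by reading each branch at the distinguished vertex as a Lie word, so non-trivalent neutral vertices go to zero; expansions at the distinguished vertex that merge two branches match $d_{\mathsf{CE}}$, while internal expansions vanish (the paper phrases this via the Jacobi identity at a single $4$-valent vertex). You then filter by the number of branches (equivalently, exterior factors), so that the associated graded reduces factorwise to the Koszul quasi-isomorphism $\mathsf{sL}_\infty\to\mathsf{sLie}$. The only difference is cosmetic: the paper filters the mapping cone and shows it is acyclic, whereas you filter source and target separately and compare spectral sequences.
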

\begin{proof} 

Fix $n$.  Recall that 
 $\Omega(A^{\tensor})(n)$ is spanned by pointed $n$-trees $(\mathsf{t},v)$ with elements of $A$ labeling the set $a^{-1}(v)$.  Let us call the components in the forest formed by removing $v$ the ``branches'' of the tree. A single branch is either a stable rooted tree, or a single lone leg. The set of branches partitions $\bar{n}$ via the associated leg labels.  
 
 The map $\Omega(A^{\tensor})(n)\stackrel{\eta}\to \mathsf{CE}_\ast(A\tensor \mathsf{sLie})(n)$ sends a summand indexed by a given tree to zero unless all its neutral vertices are trivalent, in which case it reads each branch as a flow chart, with a Lie bracket at each neutral vertex, producing a list of Lie words whose letters partition $\bar{n}$.  Any order of this list of branches produces a corresponding order on the set of Lie words, hence an element in $\mathsf{CE}_\ast(A\tensor \mathsf{sLie})(n)$.   Since both the edges of the tree and the exterior factors of $A\tensor \mathsf{sLie}$ are alternating, this assignment is independent of the chosen order.  
 
 We next check that this map is dg.  The internal differential $d_A$ commutes with $\eta$, so we check compatibility with the remaining differential terms.  Edge expansions at the distinguished vertex are mapped by $\eta$ to $0$ unless they expand and merge exactly two branches, which in turn corresponds to the $\mathsf{CE}$-differential.  Expansions at neutral vertices, if possible, are sent to zero unless we consider a tree having one non-trivalent, neutral vertex of valence $4$.  In this case, expanding produces a sum of three possible configurations of the branch which map to $0$ in the target thanks to the Jacobi identity.

So it remains to verify that $\eta$ is a quasi-iso.  For this, filter the mapping cone of $\eta$ $\Sigma^{-1}\Omega(A^\tensor)(n)\oplus\mathsf{CE}_\ast(A\tensor \mathsf{sLie})(n)$ by saying a sum $(a,b)$ is in filtration degree $q$ if the tree supporting $a$ has at most $n-q$ branches and the CE chain $b$ has degree at least $q$ (hence consists of at most $n-q$ exterior factors).  To see this is a filtration, note the number of branches and exterior factors can't increase upon applying the differential (hence filtration degree can't decrease). 

The differential on the associated graded must preserve the number of branches on the one hand, and the number of exterior factors on the other.  Therefore, the associated graded splits over partitions of $\bar{n} = \cup_i b_i$ into $n-q$ blocks.  Each summand in this splitting is of the form $A^{\tensor n-q }$ tensored with $\tensor_{i} \mathsf{sL}_\infty(b_i) \stackrel{\eta}\twoheadrightarrow \tensor_{i}\mathsf{sLie}(b_i),$ hence is acyclic by Koszulity of the commutative and Lie operads.
\end{proof}

Applying Petersen's Theorem \cite[Corollary 8.8]{Pet20} to the above yields:
\begin{corollary}\label{DPcor}  Let $X$ be a paracompact and locally compact Hausdorff space.  Let $A$ be a cdga model for the compactly supported cochains of X over $\mathbb{Q}$.  Then there is an isomorphism of graded symmetric sequences
	$$H^\ast(\Omega(A^{\tensor})) \cong  H_c^\ast(F(X,-)).$$
\end{corollary}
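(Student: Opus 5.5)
This corollary is a direct application of \cite[Corollary 8.8]{Pet20} combined with Lemma \ref{CEthm}, so the plan is short. I first recall Petersen's result. For $X$ paracompact, locally compact Hausdorff, and $A$ a cdga model for the compactly supported cochains $C_c^\ast(X;\mathbb{Q})$, the compactly supported cohomology $H_c^\ast(F(X,n))$ is computed by the $S_n$-equivariant Chevalley--Eilenberg complex of the twisted Lie algebra $A\tensor\mathsf{sLie}$, restricted to arity $n$. In our grading conventions this means $H^\ast(\mathsf{CE}_\ast(A\tensor\mathsf{sLie})(n))\cong H_c^\ast(F(X,n))$ as graded $S_n$-modules.

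\textbf{Step 1: matching conventions.} I will check that the objects in Petersen's statement agree with ours. The shift defining $\mathsf{sLie}(n)=\Sigma^n\mathsf{Lie}(n)\tensor sgn_n$ should match his suspension conventions. The degree assignment, with exterior factors in degree $-1$ and $\mathsf{CE}_p(\,\cdot\,)(n)$ placed in degree $n-p$, should agree with his cohomological grading up to a convention-independent identification. His complex should split over the total number of letters in the same way, so that arity $n$ on his side corresponds to $F(X,n)$. Since $A$ is typically non-unital (for example, when $X$ is non-compact), I also confirm that Lemma \ref{CEthm} was stated for non-unital $A$, which it was.

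\textbf{Step 2: transporting along $\eta$.} Lemma \ref{CEthm} gives an $S_n$-equivariant quasi-isomorphism $\eta\colon\Omega(A^{\tensor})(n)\to\mathsf{CE}_\ast(A\tensor\mathsf{sLie})(n)$ that is natural in $n$. Passing to cohomology gives an isomorphism of graded symmetric sequences $H^\ast(\Omega(A^{\tensor}))\cong H^\ast(\mathsf{CE}_\ast(A\tensor\mathsf{sLie}))$. Composing this with Petersen's isomorphism from the first paragraph yields the claim.

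\textbf{Main obstacle.} The only real work is Step 1. The risk is a global degree shift or sign twist between Petersen's CE complex and ours: for instance, he may use $\mathsf{Lie}$ with a $\Sigma^{n-1}$ suspension, or take homological degrees. If the conventions differ, I would absorb the discrepancy by replacing $A$ with a suitable shift or sign-twisted model. I would then sanity-check the resulting normalization on $X=\mathbb{R}$ with $n=1,2$, where $H_c^\ast(F(\mathbb{R},n))$ is concentrated in degree $n$ with the regular representation. That computation pins down the grading, after which the identification is purely formal.
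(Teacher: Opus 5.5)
Your proposal is correct and matches the paper's argument: the corollary is obtained by composing the $S_n$-equivariant quasi-isomorphism $\eta\colon\Omega(A^{\tensor})\to\mathsf{CE}_\ast(A\tensor\mathsf{sLie})$ of Lemma \ref{CEthm}, which holds for non-unital $A$, with Petersen's identification \cite[Corollary 8.8]{Pet20} of the Chevalley--Eilenberg homology of $A\tensor\mathsf{sLie}$ with $H_c^\ast(F(X,-))$. The paper takes its conventions for $\mathsf{sLie}$ and the grading of $\mathsf{CE}_\ast$ to agree with Petersen's without comment, so your Step 1 is extra diligence rather than a required step; your fallback of replacing $A$ by a shifted model would not be legitimate, since the statement concerns the given cdga $A$, but it is never needed.
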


\subsubsection{Wedges of circles.}
Next, we specialize to the case $X=R_g$, a wedge of $g$ circles.  We let $A= H^\ast(R_g)$, which we note is a cdga model for the compactly supported cochains on $X$.  Recall (Lemma $\ref{eqfa}$) there is an equivalence of $\mathbf{FA}$-modules  
$A^{\tensor } = \bigoplus_{m}\textsf{Inj}_m\tensor_{S_m} \tilde{H}^{\ast}(R_g)^{\tensor m}.$  Applying $\Omega$ we have:

\begin{proposition}\label{takecobar}  Let $A = H^\ast(R_g)$.  Then there is an equivalence of dg $S_n$-modules
	$$\Omega(A^{\tensor})(n) \cong \bigoplus_{n\geq m}\Omega(\textsf{Inj}_m)(n)\tensor_{S_m} \tilde{H}^{\ast}(R_g)^{\tensor m}$$
\end{proposition}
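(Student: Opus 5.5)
The plan is to apply $\Omega$ to the isomorphism of Lemma \ref{eqfa} and check that $\Omega$ commutes with direct sums and with the functor $-\tensor_{S_m} V$ for a fixed $S_m$-module $V$, where $V = \tilde{H}^\ast(R_g)^{\tensor m}$. By Lemma \ref{eqfa} we have an isomorphism of graded $\mathbf{FA}$-modules, and hence of the underlying $\mathbf{FS}$-modules,
$$A^{\tensor}\cong \bigoplus_m \mathsf{Inj}_m\tensor_{S_m}\tilde{H}^\ast(R_g)^{\tensor m}.$$
Since $\Omega$ is functorial in $\mathbf{FS}$-modules (a morphism of $\mathbf{FS}$-modules induces a map on each $A(\mathsf{t},v)$ that commutes with the edge-expansion maps), an isomorphism of $\mathbf{FS}$-modules induces an isomorphism of cobar complexes. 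The internal differential is zero here because $A=H^\ast(R_g)$ has trivial differential. So it suffices to prove that $\Omega(\bigoplus_m \mathsf{C}(V_m))(n)\cong \bigoplus_m \Omega(\mathsf{Inj}_m)(n)\tensor_{S_m}V_m$ as dg $S_n$-modules.

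\textbf{Step 1: direct sums.} In each arity the cobar construction is a direct sum over pointed trees of $A(a^{-1}(v))\tensor det(\mathsf{t})$. The differential is built from the maps $A(f)$ for surjections $f$, and these respect a direct sum decomposition of $A$. Hence $\Omega(\bigoplus_m A_m)=\bigoplus_m\Omega(A_m)$. Only finitely many $m$ contribute in arity $n$, because $\mathsf{Inj}_m(k)=0$ for $k<m$ and the distinguished vertex has valence at most $n+(\text{number of edges})$, which is bounded. This also keeps everything finite dimensional, as the linear-dual definition of $\Omega$ requires.

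\textbf{Step 2: pulling out $\tensor_{S_m}V$.} For each pointed tree,
$$\mathsf{C}(V)(\mathsf{t},v)\tensor det(\mathsf{t}) = \bigl(\mathsf{Inj}_m(a^{-1}(v))\tensor det(\mathsf{t})\bigr)\tensor_{S_m}V.$$
The edge-expansion maps act on the $\mathsf{Inj}_m$ factor alone, because $\mathsf{C}(V)(f)=\mathsf{Inj}_m(f)\tensor_{S_m}\mathrm{id}_V$. These maps commute with the right $S_m$-action, since postcomposition commutes with precomposition by $\sigma$. Therefore the differential of $\Omega(\mathsf{C}(V))$ is $d_{\Omega(\mathsf{Inj}_m)}\tensor_{S_m}\mathrm{id}_V$.

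\textbf{Step 3: signs and the $S_n$-action.} The $S_n$-action permutes leg labels and does not touch $V$, so the isomorphism is $S_n$-equivariant. For the signs, $V$ sits in degree $m$, and the Koszul signs from passing $V$ past $det(\mathsf{t})$ must match on both sides. I would fix the convention of placing $V$ last, so that the degree $-1$ edge maps never cross $V$ and no extra signs arise. The identification of $A^{\tensor}(f)$ on the $V$-factor with pure $\mathsf{Inj}_m(f)$ already absorbs the Koszul signs of Example \ref{gralg}; this is part of Lemma \ref{eqfa}.

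\textbf{Main obstacle.} The only real care needed is this sign bookkeeping, together with confirming that the $\mathbf{FA}$-isomorphism of Lemma \ref{eqfa} is an isomorphism of $\mathbf{FS}$-modules including signs. Beyond that, the argument is a formal consequence of the fact that $\Omega$ is defined vertex-wise and functorially.
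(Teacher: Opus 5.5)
Your proposal is correct and follows the paper's route. The paper obtains Proposition~\ref{takecobar} by applying $\Omega$ to the $\mathbf{FA}$-isomorphism of Lemma~\ref{eqfa}, with no further comment. Your Steps 1--3 spell out the facts this tacitly uses: $\Omega$ is functorial in $\mathbf{FS}$-modules, commutes with the (finite, in each arity) direct sum, and commutes with $-\tensor_{S_m}V$ because the edge-expansion maps act only on the $\mathsf{Inj}_m$ factor.

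The sign bookkeeping you flag is harmless, though it is not quite ``no extra signs''. Moving the degree-$m$ factor $V$ past $det(\mathsf{t})$ costs a Koszul sign $(-1)^{m|E|}$ in the identification. That sign exactly absorbs the $(-1)^m$ incurred when the odd edge maps pass the degree-$m$ factor on the left-hand side, and it is compatible with both the $S_n$-action and the $S_m$-coinvariants.
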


In light of this proposition, we define the graded $S_n\times S_m$-module:
$$
\Phi[n,m] := H^\ast(\Omega(\mathsf{Inj}_m))(n)
$$
Then, combining Proposition $\ref{takecobar}$ with Corollary $\ref{DPcor}$ we find:

\begin{corollary}\label{checkGH}  There is an isomorphism of graded symmetric sequences:
$$
H_c^\ast(F(R_g,n))\cong \bigoplus_{n\geq m}\Phi[n,m]\tensor_{S_m} \tilde{H}^{\ast}(R_g)^{\tensor m}.$$
\end{corollary}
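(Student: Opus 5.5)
The plan is to combine Corollary \ref{DPcor} with Proposition \ref{takecobar} and the definition of $\Phi[n,m]$, passing to cohomology in Proposition \ref{takecobar} term by term. First I take $A=H^\ast(R_g)$ with zero differential. This is a cdga model for the compactly supported cochains of $R_g$: the space is compact, so compactly supported cochains are ordinary cochains, and a wedge of circles is formal. Corollary \ref{DPcor} then gives $H^\ast(\Omega(A^{\tensor}))(n)\cong H_c^\ast(F(R_g,n))$ as graded $S_n$-modules.

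Next I compute the left side through Proposition \ref{takecobar}. The key point is that the decomposition of Lemma \ref{eqfa} is an isomorphism of $\mathbf{FA}$-modules, hence of $\mathbf{FS}$-modules. Since $\Omega$ is built summand-by-summand over trees from the values $A(a^{-1}(v))$ and the $\mathbf{FS}$-structure maps, it is additive and commutes with $-\tensor_{S_m}W$ for a fixed $S_m$-module $W$. Concretely, $\Omega(\mathsf{Inj}_m\tensor_{S_m}W)(n)=\Omega(\mathsf{Inj}_m)(n)\tensor_{S_m}W$, where $W=\tilde H^\ast(R_g)^{\tensor m}$ carries zero differential and the Koszul signs are absorbed into the $S_m$-action on $W$.

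Because we work over $\mathbb{Q}$, the functor $-\tensor_{S_m}W$ is exact: coinvariants of a finite group in characteristic zero are a direct summand, via the averaging idempotent. Cohomology therefore commutes with it, giving
$$H^\ast\bigl(\Omega(\mathsf{Inj}_m)(n)\tensor_{S_m}W\bigr)\cong H^\ast(\Omega(\mathsf{Inj}_m)(n))\tensor_{S_m}W=\Phi[n,m]\tensor_{S_m}W.$$
Cohomology also commutes with the finite direct sum over $m\le n$. Here $\mathsf{Inj}_m(k)=0$ for $k<m$, and every tree with legs $\bar n$ has $|a^{-1}(v)|\le n$ at the distinguished vertex (stability forces each branch to contribute at least one leg), so only $m\le n$ contributes. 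Combining with the first step yields the stated isomorphism of graded $S_n$-modules.

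The main obstacle is not conceptual but checking that the isomorphism in Proposition \ref{takecobar} really is one of complexes. I would verify that the $\mathbf{FS}$-structure on $A^{\tensor}$, with its Koszul signs from Example \ref{gralg}, matches the one on $\bigoplus_m\mathsf{Inj}_m\tensor_{S_m}\tilde H^\ast(R_g)^{\tensor m}$ under the map $\phi\tensor\vec w\mapsto A^{\tensor}(\phi)(\vec w)$ of Lemma \ref{eqfa}. This uses that products of two reduced classes vanish: for surjections the multiplications of classes in $\tilde H^1$ vanish, exactly matching non-injective composites being sent to $0$ in $\mathsf{Inj}_m$. Once that is in place, the rest is formal.
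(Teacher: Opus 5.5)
Your proposal is correct and follows the paper's route: the paper obtains this corollary directly by combining Proposition \ref{takecobar} (itself a consequence of Lemma \ref{eqfa}) with Corollary \ref{DPcor} and the definition $\Phi[n,m]:=H^\ast(\Omega(\mathsf{Inj}_m))(n)$. The details you add are exactly the steps the paper leaves implicit: that $H^\ast(R_g)$ is a valid model, that $\Omega$ commutes with $-\otimes_{S_m}W$, that taking cohomology commutes with $S_m$-coinvariants over $\mathbb{Q}$, and that only $m\le n$ contributes.
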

This result follows from \cite[Theorem 1.5]{GH}, modulo verification that our definition of $\Phi[n,m]$ coincides with theirs.  For this, we may appeal to the uniqueness of coefficients in a polynomial functor.  In particular, given a vector space $W$ we may form a unital graded commutative algebra $A_W:=\mathbb{Q}\oplus \Sigma W$ (with $w_1w_2=0$ forced by degree considerations).  Then the two descriptions of $\Phi[n,m]$ are coefficients of the functor $W\mapsto H^\ast(\Omega((A_W)^{\tensor}))(n)$ by Lemma $\ref{CEthm}$.

\begin{definition} Define the graded $S_n$-module:
$$
\Phi[n,\lambda] := H^\ast(\Omega(\mathsf{C}_\lambda))(n).
$$
\end{definition}
In particular, each  $\Phi[-,\lambda]$ is naturally a module over the Lie operad. 
In general, the cohomology $\Phi[n,\lambda]$ is unknown, but we will make use of the following known computations:
\begin{example}\label{WHex}  Write $F(\mathbb{R}^3,n-1)$ for the configuration space of $n-1$ points in $\mathbb{R}^3$.  Recall this space has cohomology only in even degrees $0\leq 2i \leq 2(n-2)$, and that each $H^{2i}(F(\mathbb{R}^3,n-1))$ may be viewed as an $S_{n}$-module via an identification with $F(S^3,n)/SU(2)$.  Then \cite[Proposition 5.16]{GH} show that $\Phi[n,(1^m)]\tensor sgn_n$ is isomorphic to $H_{2(n-m)}(F(\mathbb{R}^3,n-1))$ in degree $n-m$, is isomorphic to $H_{2(n-m-1)}(F(\mathbb{R}^3,n-1))$ in degree $n-m-1$ (interpreted as $0$ in the case $n=m$) and is $0$ elsewhere.\end{example}


Because it will recur in our results below, let us adopt some notation for the graded $S_n$-representation 
\begin{equation}\label{Wdef}
	\mathsf{W}_{\pm}(n) :=\bigoplus_{i} \Sigma^{2i}\Phi[n,(1^{2i})].
\end{equation} Note that, using the Betti numbers of the configuration space of points in $\mathbb{R}^3$, we see this gives two distinct $(n-1)!/2$-dimensional representations of $S_n$ in degree $n$ and $n-1$ respectively.  Let us denote the representation in degree $n$ as $\mathsf{W}_+(n)$ and the representation in degree $n-1$ as $\mathsf{W}_-(n)$.

We remark that \cite{FW} shows (in the present notation) that $\mathsf{W}_-(n)\cong H_\ast(\Delta_{1,n})$, the tropical moduli space of genus $1$, or equivalently to the homology of the graph complex $\mathsf{GC}_2(1,n)$.  We suspect that $\mathsf{W}_+(n)$ is isomorphic to the unique non-zero homology group of the opposite parity complex $\mathsf{GC}_3(1,n)$, although we won't use this fact.

\section{$\mathbf{FA}$-modules and Lie graph homology.}  

The aim of this section is to first show that $\mathbf{FA}$-modules arise when considering Lie graph homology.  Using work of \cite{CHKV} we arrive at an explicit decomposition over the $\mathbf{FA}$-modules of the form $\mathsf{C}_\lambda$.      
With this decomposition, we can make a direct comparison between the Feynman transform of Lie graph homology in low genus and the compactly supported cohomology of $F(S^1\vee S^1; -)$.  In particular, we prove Statement 1 of Theorem $\ref{ftthm}$ from the introduction.

\subsection{Cobar Construction of $\mathbf{FS}$-modules and the Feynman transform}  To begin, we very briefly recall the notions of modular operads and the Feynman transform, introduced in \cite{GeK2}.  We then record the relationship to modules over operads and their bar construction.  Finally, we specialize to modular operads under the commutative operad, which give examples of $\mathbf{FS}$ and $\mathbf{FA}$-modules, whose (co)bar constructions may be used to probe a portion of the Feynman transform.  Additional references for modular operads and the Feynman transform which are particularly well calibrated for these results include \cite{WardMP} and \cite{CLPW2}.

\subsubsection{Recollection of Modular Operads}
 Define a modular graph to be a connected graph $\gamma$ along with an ordering on its sets of legs and a function $g\colon V(\gamma) \to \mathbb{N}$, such that each vertex satisfies the stability condition $|a^{-1}(v)| + 2g(v) \geq 3$.  We call $g(v)$ the genus of the vertex $v$.  The total genus of a modular graph is defined to be $g(\gamma) = \beta(\gamma) +\sum_v g(v)$, where $\beta(\gamma) = |E|-|V|+1$ is the first betti number of the connected graph.  A modular graph with $n$ legs and total genus $g$ is said to be of type $(g,n)$.  
 
 A modular operad $\op{M}$ is a sequence of symmetric sequences $\op{M}(g,-)$ having contraction operations indexed by modular graphs.  If $\gamma$ is a modular graph of type $(g,n)$ the corresponding operation is an $S_n$-equivariant map
$\mu_\gamma\colon \op{M}(\gamma) := \tensor_v \op{M}(g(v),a^{-1}(v))\to \op{M}(g,n)$.
Each such operation may be generated by a sequence of single edge contractions.  These generating operations come in two types depending on whether a given edge is adjacent to one vertex or two (i.e.\ is or is not a loop):
$$
\op{M}(g,n)\stackrel{\circ_{ij}}\to \op{M}(g+1,n-2) \ \ \ \ \ \   \op{M}(g_1,n_1)\tensor \op{M}(g_2,n_2) \stackrel{\xycirc{i}{j}}\to \op{M}(g_1+g_2,n_1+n_2-2).
$$
We call $\circ_{ij}$ ``self-gluings'' and $\xycirc{i}{j}$ ``non-self gluings''.

If $\op{M}$ is a modular operad, then we may form an associated, underlying (cyclic) operad $\op{O}$ by setting $\op{O}(n):=\op{M}(0,n+1)$ and retaining the relevant non-self gluings of the modular operad $\op{M}$ to give the operad structure on $\op{O}$.  For each $g$, we may furthermore choose to view the symmetric sequence $\op{M}(g,-)$ as a right module over the operad $\op{O}$ via the relevant non-self gluings.

An important variant of the notion of modular operads are $\mathfrak{K}$-twisted modular operads, in which the generating operations are indexed as above, but each is assumed to have degree 1.

\subsubsection{Recollection of the Feynman Transform.}
The Feynman transform is a pair of functors, both denoted by $\mathsf{FT}$, between the categories of modular and $\mathfrak{K}$-twisted modular operads.  For our purposes, it will also be convenient to consider the linear dual notion of the co-Feynman transform, which we denote $\mathsf{cFT}$, taking (twisted) modular operads to cooperads and vice-versa.

If $\op{M}$ is a modular operad we define the coFeynman transform to be the family of chain complexes:
$$
\mathsf{cFT}(\op{M})(g,n) = \ds\bigoplus_{[\gamma]} \op{M}(\gamma)\tensor_{Aut(\gamma)} det(\gamma)
$$
where the sum is indexed by isomorphism classes of modular graphs of type $(g,n)$, and where the differential is given by a sum over edge contractions.  As above, $det(\gamma)$ denotes the top exterior power of the set of edges; the appearance of this factor will ensure the differential is indeed square zero.  In the case where $\op{M}$ is $\mathfrak{K}$-twisted, the formula is the same except the $det(\gamma)$ factor is removed.  

Operating under the simplifying assumption that our graded vector spaces are finite dimensional in each graded component, we define the Feynman transform $\mathsf{FT}$ to be the linear dual of the coFeynman transform.  Grafting legs of the indexing graphs ensures the image of the Feynman transform is itself a modular operad.  This original definition of \cite{GeK2} suits our purposes; if our modular operads were not finite dimensional we could choose to define the Feynman transform to take modular cooperads as inputs and avoid linear dualization.

\subsubsection{Connection to the Cobar Construction of $\mathbf{FS}$-modules}

In the co-Feynman transform, we can identify a subcomplex of each $\mathsf{cFT}(\op{M})(g,n)$ by summing only over those graphs having a vertex $v$ satisfying $g(v)=g$.  This condition forces the graph to be a tree, with only this lone vertex $v$ having non-zero genus.  In other words, the modular graph $\gamma$ is a pointed $n$-tree, with distinguished vertex $v$.  As such, the bar construction of $\op{M}(g,-)$, viewed as a right module over the operad $\op{M}(0,-)$, specifies a sequence of subcomplexes of $\mathsf{cFT}(\op{M})(g,-)$.

Let us now specialize further to the case that the operad $\op{M}(0,-)$ contains the commutative operad $\mathsf{Com}$ as a (cyclic) sub-operad (this includes the case when $\op{M}(0,-) =\mathsf{Com}$ of course).  By considering only those non-self gluings between $\mathsf{Com}$ and $\op{M}(g,n)$ we find:

\begin{lemma}\label{FSbar}
Let $\op{M}$ be a modular operad for which $\mathsf{Com}\subset \op{M}$.  For each $g$, the symmetric sequence $\op{M}(g,-)$ inherits the structure of an $\mathbf{FS}^{op}$-module; applying the cobar construction gives a sequence of subcomplexes
$$	
\mathsf{B}_{\mathbf{FS}}(\op{M}(g,-))\subset \mathsf{cFT}(\op{M})(g,-).
$$
Dually, $\op{M}(g,-)^\ast$ inherits the structure of an $\mathbf{FS}$-module; applying the cobar construction gives a sequence of quotient complexes
$$	
\mathsf{FT}(\op{M})(g,-) \twoheadrightarrow \Omega_{\mathbf{FS}}(\op{M}(g,-)^\ast).
$$
\end{lemma}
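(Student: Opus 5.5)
The plan is to build the $\mathbf{FS}^{op}$-structure on $\op{M}(g,-)$ directly from the commutative suboperad, and then to identify the bar construction with a subcomplex of $\mathsf{cFT}$ by comparing indexing graphs and differentials. The dual statement about $\mathsf{FT}$ and $\Omega_{\mathbf{FS}}$ will then follow by linear dualization.

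First, the module structure. Write $\mu_k\in\mathsf{Com}(0,k+1)=\op{M}(0,k+1)$ for the canonical basis element, $k\geq 2$. Let $f\colon \bar{n}_1\to\bar{n}_2$ be a surjection. Define $\op{M}(g,n_2)\to\op{M}(g,n_1)$ by grafting, for each $j\in\bar n_2$ with $|f^{-1}(j)|\geq 2$, a copy of $\mu_{|f^{-1}(j)|}$ onto the leg $j$ via a non-self gluing. Its remaining legs are labeled by $f^{-1}(j)$, and fibers of size one are left alone. Equivalently, this is the operation $\mu_\gamma$ indexed by the pointed tree $\gamma$ with one genus $g$ vertex and one genus $0$ corolla for each non-trivial fiber.

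Functoriality $(g\circ f)^\ast=f^\ast g^\ast$ reduces to two facts. The first is associativity of the modular operad operations: contracting a two-level tree in either order gives $\mu_\gamma$. The second is that $\mathsf{Com}$ is closed under composition, with $\mu_a\circ_i\mu_b=\mu_{a+b-1}$. This is what forces the composite of the iterated corollas to be the single corolla indexed by the composite surjection. Equivariance under bijections comes from the $S_n$-equivariance of the operations. Nothing here uses the self gluings.

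Next, the subcomplex. For a stable pointed $n$-tree $(\mathsf{t},v)$, declare $g(v)=g$ and $g(w)=0$ for $w\neq v$; this makes $\mathsf{t}$ a modular graph of type $(g,n)$. Since the legs are labeled and the distinguished vertex is the unique vertex of positive genus (when $g>0$), the automorphism group is trivial. Conversely, any graph of type $(g,n)$ with a genus $g$ vertex has $\beta=0$ and arises this way.

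In the summand of $\mathsf{cFT}$ indexed by such a $\mathsf{t}$, we use only $\mathsf{Com}(0,-)\subset\op{M}(0,-)$ at the neutral vertices, which is one dimensional at each. The summand is then canonically $\op{M}(g,a^{-1}(v))\tensor det(\mathsf{t})$, which is exactly the corresponding summand of $\mathsf{B}_{\mathbf{FS}}$. Contracting an edge between two neutral vertices gives $\mu_a\circ\mu_b=\mu_{a+b-2}$, which matches the identity map $f_e$ on the bar side. Contracting an edge adjacent to $v$ is precisely the grafting $f_e$ defined above. The $det$ factors and signs agree by construction.

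Finally, closure. The $\mathsf{cFT}$ differential only contracts edges, so it never lowers the genus of a vertex and never creates a loop from a tree. Hence it preserves the span of these summands, and they form a subcomplex.

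The main obstacle is the case $g=0$, where the distinguished vertex is not picked out by genus. I would restrict to $g\geq1$, or handle $g=0$ by indexing over pointed trees and noting that the map to $\mathsf{cFT}$ is still dg even if it is not injective. Under finite dimensionality, dualizing the inclusion $\mathsf{B}_{\mathbf{FS}}(\op{M}(g,-))\subset\mathsf{cFT}(\op{M})(g,-)$ yields the quotient $\mathsf{FT}(\op{M})(g,-)\twoheadrightarrow\Omega_{\mathbf{FS}}(\op{M}(g,-)^\ast)$. This uses the definition of $\Omega$ as the dual of $\mathsf{B}$ applied to the $\mathbf{FS}^{op}$-module $(\op{M}(g,-)^\ast)^\ast=\op{M}(g,-)$.
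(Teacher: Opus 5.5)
Your proposal is correct and is essentially the paper's argument: the paper notes that graphs of type $(g,n)$ with a genus-$g$ vertex are exactly pointed $n$-trees, so these summands of $\mathsf{cFT}(\op{M})(g,n)$ form the bar construction of $\op{M}(g,-)$ as a right $\op{M}(0,-)$-module. It then restricts the neutral labels to $\mathsf{Com}$, using only non-self gluings, to obtain $\mathsf{B}_{\mathbf{FS}}$, and gets the $\mathsf{FT}$ statement by dualizing. Your extra details are sound refinements rather than a different route: explicit functoriality via composition in $\mathsf{Com}$, trivial automorphism groups, and the caveat that for $g=0$ the distinguished vertex is not determined by genus (a case the paper tacitly sets aside).
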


One example of such an $\op{M}$ is the modular operad formed by the homology of Deligne-Mumford compactifications of the moduli space of curves, for which the associated $\mathbf{FA}$-modules are considered in \cite{CLPW2}.  Our access point came from another example of such a modular operad, namely Lie graph homology, which we now recall. 

\subsection{$\mathbf{FA}$-module decomposition of Lie Graph Homology  for $g=1,2$.}

Recall $\mathsf{sLie}$ denotes a shift of the suspension of the (cyclic) operad $\mathsf{Lie}$.  Extending by $0$ to higher genus, $\mathsf{sLie}$ forms a $\mathfrak{K}$-twisted modular operad and we may take its Feynman transform.  We refer to the homology of this modular operad $H_\ast(\mathsf{FT}(\mathsf{sLie}))$ as Lie graph homology.  Koszul duality between the commutative and Lie opeards implies that the underlying cyclic operad in genus $0$ is the commutative operad $\mathsf{Com}= H_\ast(\mathsf{FT}(\mathsf{sLie}))(0,-)$.

As mentioned in the introduction,
$\mathsf{FT}(\mathsf{sLie})(g,n)$ computes the homology of a group $\Gamma_{g,n}$ which can be described, after \cite{CHKV}, as the homotopy classes of homotopy automorphisms of a wedge of $g$ circles which fix $n$ given points.  
 As such, from here on, we write $H_\ast(\Gamma)$ for the modular operad $$H_\ast(\Gamma):=H_\ast(\mathsf{FT}(\mathsf{sLie})).$$  We also write $H^\ast(\Gamma)$ for the dual modular cooperad.


From the above discussion we know that the for each $d$ and $g$ the symmetric sequence $H^d(\Gamma_{g,-})$ has the structure of an $\mathbf{FS}$-module coming from the modular operadic compositions which compose along trees with a distinguished genus $g$ vertex.   
The purpose of this section is to prove that these $\mathbf{FS}$-modules may be extended to $\mathbf{FA}$-modules when $g=1$ and $g=2$, and to give their explicit decomposition.

Let's first record the case of $g=1$.  Although not presented in this language, this case can be extracted from our earlier work:

\begin{lemma}\cite[Lemma 3.2]{WardW}\label{g1fa}  The $\mathbf{FS}$-module structure on
	$H^m(\Gamma_{1,\ast})$ lifts to an $\mathbf{FA}$-module given by $\widetilde{\mathsf{C}}_{1^{m+1}}$ when $m\geq 2$ is even and 0 otherwise.
\end{lemma}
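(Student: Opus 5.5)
The plan is to compute $H^\ast(\Gamma_{1,n})$ together with its $\mathbf{FS}$-structure directly from the genus one Feynman transform of $\mathsf{sLie}$. Then we show that the structure maps are the restrictions of the $\mathbf{FA}$-structure on $\widetilde{\mathsf{C}}_{1^{m+1}}$.

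First, recall the description of $\Gamma_{1,n}$. The group of homotopy automorphisms of $S^1$ fixing $n$ points is $\mathbb{Z}^{n-1}\rtimes \mathbb{Z}_2$, or equivalently the rational model for $\mathsf{FT}(\mathsf{sLie})(1,n)$ given by one-loop graphs with trees attached. Rationally the cohomology is therefore
$$H^\ast(\Gamma_{1,n}) = \big(\Lambda^\ast H^1(\mathbb{Z}^{n-1})\big)^{\mathbb{Z}_2}.$$
Here the involution acts by $-1$ on $H^1\cong \mathbb{Q}^{n-1}$, the reduced permutation representation $V_{(n-1,1)}$ twisted appropriately. Taking invariants kills odd exterior powers, so only even $m$ survive. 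This gives $\Lambda^m V_{(n-1,1)}\cong V_{(n-m,1^m)}$ up to the sign/shift conventions of $\mathsf{sLie}$. Tensored with the sign coming from the orientation, this is exactly $\widetilde{\mathsf{C}}_{1^{m+1}}(n)$: indeed $\mathsf{C}_{1^{m}}(n)=\Lambda^m P_n$, and quotienting by the image of $\mathsf{C}_{1^{m+1}}$ leaves the hook piece. For $m=0$ the stability condition excludes the trivial term, hence the restriction $m\geq 2$.

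Second, identify the $\mathbf{FS}$-action. A surjection $\bar n_1\to\bar n_2$ acts through gluing $\mathsf{Com}$ corollas onto legs, which corresponds to the map of groups $\Gamma_{1,n_1}\to\Gamma_{1,n_2}$ induced by identifying marked points. On $H^1$ this is the pullback $P_{n_2}\to P_{n_1}$ on differences of points, which is exactly the functoriality of $\Lambda^m$ of reduced permutation modules defining $\mathsf{C}_{1^m}$, and it descends to the quotient. Injections then act by forgetting points, via the map $\Gamma_{1,n_2}\to \Gamma_{1,n_1}$ in the opposite direction of cohomology. The resulting $\mathbf{FA}$-structure is the one on $\widetilde{\mathsf{C}}_{1^{m+1}}$, and it restricts to the given $\mathbf{FS}$-structure.

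The main obstacle is the sign and degree bookkeeping: showing that the shifted $\mathsf{sLie}$ conventions turn $\Lambda^m$ of the reduced permutation representation into the quotient $\widetilde{\mathsf{C}}_{1^{m+1}}$ rather than a sign-twisted variant, with the parity restriction coming out as "$m$ even". I would settle this by checking small $n$ (say $n=m+1,m+2$) against the dimension count $\binom{n-1}{m}$ and the Whitehouse-type identification from Example~\ref{WHex}.
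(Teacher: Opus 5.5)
Your skeleton is the right one: write $\Gamma_{1,n}$ as an extension of $\mathbb{Z}_2$ by $K_n=\mathbb{Z}^n/\mathbb{Z}\Delta\cong\mathbb{Z}^{n-1}$, take $\mathbb{Z}_2$-invariants, and transport the $\mathbf{FS}$-structure along maps of extensions. It is the genus-one version of the argument the paper gives for Proposition~\ref{g2dec}; for genus one the paper itself simply cites \cite{WardW}.

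\textbf{The $\mathbf{FS}$-structure is identified with the wrong map.} This is the step that carries the content. The structure is covariant on cohomology: for $f\colon\bar n_1\twoheadrightarrow\bar n_2$ it gives $H^m(\Gamma_{1,n_1})\to H^m(\Gamma_{1,n_2})$, because it is dual to grafting $\mathsf{Com}$-corollas, which adds legs. Group-theoretically it is pullback along the splitting $\epsilon\colon\Gamma_{1,n_2}\to\Gamma_{1,n_1}$ of the forgetful map from \cite{CHKV}. This map doubles points, and its effect on homology is the operadic composition by \cite[Section 6.1]{CHKV}.

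There is no natural homomorphism $\Gamma_{1,n_1}\to\Gamma_{1,n_2}$ ``identifying marked points''. On the kernels, summing coordinates over fibres does not even preserve $\mathbb{Z}\Delta$ unless all fibres have equal size. The map $P_{n_2}\to P_{n_1}$ you write down has the wrong variance for both the $\mathbf{FS}$-module and $\mathsf{C}_{1^m}$. It is in fact the dual $\mathbf{FS}^{op}$-structure on $H_1(K_\bullet)$.

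What you actually need to check is the following. The map $\epsilon$ commutes with the projections to $\mathbb{Z}_2$ and restricts to $K_{n_2}\to K_{n_1}$, $(x_j)\mapsto(x_{f(i)})_i$; this is the analogue of the commutative diagram in the proof of Proposition~\ref{g2dec}. Then on $H^1(K_n)=P^0_n:=\ker(P_n\to\mathbb{Q})$ the induced map is the pushforward $e_i\mapsto e_{f(i)}$. Forgetful maps give the same formula $x\mapsto x\circ f$ for injections, so $n\mapsto K_n$ is an $\mathbf{FA}^{op}$-group. The $\mathbf{FA}$-axioms for the lift then come for free from naturality of $H^m(\Gamma_{1,n})\cong H^m(K_n)^{\mathbb{Z}_2}$, with $\mathbb{Z}_2$ acting by $(-1)^m$.

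\textbf{The target module is misidentified.} $H^m(K_n)=\Lambda^mP^0_n\subset\Lambda^mP_n=\mathsf{C}_{1^m}(n)$ is the image of the Koszul contraction $\mathsf{C}_{1^{m+1}}\to\mathsf{C}_{1^m}$. This contraction is natural in all set maps, since pushforward preserves the augmentation. By exactness of the Koszul complex, the image is $\mathsf{C}_{1^{m+1}}/\mathrm{im}(\mathsf{C}_{1^{m+2}})=\widetilde{\mathsf{C}}_{1^{m+1}}$ as an $\mathbf{FA}$-module.

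What you wrote, $\mathsf{C}_{1^m}$ modulo the image of $\mathsf{C}_{1^{m+1}}$, is $\widetilde{\mathsf{C}}_{1^m}$. That has type $V_{(n-m+1,1^{m-1})}$, the wrong hook.

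There is also no sign twist. $\Lambda^mV_{(n-1,1)}\cong V_{(n-m,1^m)}$ is already $\widetilde{\mathsf{C}}_{1^{m+1}}(n)$, whereas tensoring with $sgn_n$ would give $V_{(m+1,1^{n-m-1})}$. Your small-$n$ check would catch this, but as stated the argument lands on the wrong module.

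\textbf{The case $m=0$.} Stability does not remove it: $H^0(\Gamma_{1,n})=\mathbb{Q}=\widetilde{\mathsf{C}}_{1^1}(n)$ for $n\geq 1$. The statement's restriction to $m\geq 2$ is a convention, reflecting that degree-zero higher-genus labels are quotiented out in $\overline{\mathsf{FT}}$. It is not a vanishing result you need to prove.
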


Our main interest is then the case $g=2$.   Since our interest is in the multiplicity of the $\mathsf{C}_\lambda$, we give the statement in additive notation, writing  $\omega\mathsf{C}_\lambda$ in place of $\mathsf{C}_\lambda^{\oplus \omega}$. 

\begin{proposition} \label{g2dec}  Let $d>0$.  The $\mathbf{FS}$-module structure on $H^d(\Gamma_{2,\ast})$ lifts to an $\mathbf{FA}$-module given by:	
	\begin{itemize}
		\item  If $d\equiv 2 \text{ mod } 4$ then $H^d(\Gamma_{2,\ast}) = 0$.
		\item  If $d\equiv 0 \text{ mod } 4$ then $H^d(\Gamma_{2,\ast}) = 
		\mathsf{C}_{(2^{d/2})}$.
		\item If $d=q+1$ is odd then 
	$$H^{d}(\Gamma_{2,\ast}) = 
\ds\bigoplus_{0\leq i<q/2} w_{i,q}\mathsf{C}_{(2^i,1^{q-2i})}$$
where 
$$w_{i,q} = \begin{cases}
	 \text{dim}(\op{M}_{q-2i+2}) & \text{ if } i \text{ is odd,} \\ 	 
	 \text{dim}(\op{S}_{q-2i+2}) & \text{ if } i \text{ is even.} 
\end{cases}
$$
the dimensions of the space of modular (resp.\ modular cusp) forms of the given weight.
	\end{itemize}
\end{proposition}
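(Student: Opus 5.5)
The plan is to use the description of $\Gamma_{2,n}$ from \cite{CHKV} as an extension of $\Gamma_{2,0}$, where $\Gamma_{2,0}\cong Out(F_2)\cong GL_2(\mathbb{Z})$. Fixing $n$ points on a graph of rank $2$ gives an exact sequence
$$1\to F_2^{\,n}\to \Gamma_{2,n}\to \Gamma_{2,0}\to 1,$$
up to the usual modifications for the first marked point; for $g=2$ one may use $Aut(F_2)$ and then $F_2^{n-1}$. We then run the Lyndon--Hochschild--Serre spectral sequence. Since $H^\ast(F_2^{\,n};\mathbb{Q}) = H^\ast(R_2)^{\tensor n}$, the $E_2$ page is
$$H^p\big(GL_2(\mathbb{Z}), H^\ast(R_2)^{\tensor n}\big).$$
By Lemma \ref{eqfa} and Equation \ref{permex}, the coefficient module in degree $m$ is $\mathsf{C}((\Sigma P_2)^{\tensor m})$, with $GL_2(\mathbb{Z})$ acting on $H^1(R_2)=\mathbb{Q}^2$ through the standard representation, diagonally and commuting with the $S_m$ action. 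Lemma \ref{pulloutc} then gives
$$E_2^{p,m}=\mathsf{C}\big(H^p(GL_2(\mathbb{Z}), (\mathbb{Q}^2)^{\tensor m}\tensor sgn_m)\big).$$
The $sgn_m$ twist records the degree-one Koszul signs. This is already an $\mathbf{FA}$-module of the form $\oplus\mathsf{C}(\cdot)$.

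Next we decompose the coefficients. Schur--Weyl duality gives $(\mathbb{Q}^2)^{\tensor m}=\bigoplus_{\mu}\mathbb{S}_\mu(\mathbb{Q}^2)\tensor V_\mu$, summed over $\mu$ with at most two rows. After the sign twist, the $S_m$-factor becomes $V_{\mu^\ast}$, a partition with columns of height at most $2$, that is, of shape $(2^i,1^{m-2i})$. The $GL_2$-factor is $\mathbb{S}_{(m-i,i)}(\mathbb{Q}^2)=\det^{i}\tensor \mathrm{Sym}^{m-2i}$.

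For the group cohomology we use the virtual cohomological dimension of $GL_2(\mathbb{Z})$, which is $1$ rationally. So only $p=0,1$ occur, the spectral sequence degenerates at $E_2$, and there are no extension problems because everything is $\mathbf{FS}$-equivariant and $\mathbb{Q}$-semisimple on each arity. The relevant cohomology is classical:
\begin{itemize}
\item $H^0(GL_2(\mathbb{Z}),\det^i\tensor\mathrm{Sym}^k)$ is nonzero only for $k=0$ and $i$ even, where it is $\mathbb{Q}$. This gives the $\mathsf{C}_{(2^{d/2})}$ summand with $m=2i=d$ and $i$ even, hence $d\equiv 0 \bmod 4$, and it forces $d\equiv 2 \bmod 4$ to contribute nothing in even degrees.
\item By Eichler--Shimura, $H^1(GL_2(\mathbb{Z}),\det^i\tensor\mathrm{Sym}^k)$ is the $\pm$ eigenspace of the complex-conjugation involution on $H^1(SL_2(\mathbb{Z}),\mathrm{Sym}^k)\cong S_{k+2}\oplus \overline{S_{k+2}}\oplus E_{k+2}$. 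For one parity of $i$ this yields $\dim\op{M}_{k+2}$ (cusp forms plus Eisenstein), and for the other $\dim \op{S}_{k+2}$.
\end{itemize}
With $k=q-2i$ and $d=q+1$, this produces $w_{i,q}$. The condition $i<q/2$ corresponds to $k>0$, since $H^1$ vanishes for $k=0$.

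The main obstacle is bookkeeping rather than concept. We need to check that the $\mathbf{FS}$-structure from modular operadic gluing with $\mathsf{Com}$ agrees with the $\mathbf{FS}$-structure (the restriction of the $\mathbf{FA}$-structure) induced on the LHS $E_2$ page by the diagonal maps of $R_2$ that merge marked points. We also need to pin down which parity of $i$ receives the Eisenstein class, which depends on the sign twist through $\det^i$ together with the degree shift from $H^1$. For the first point I would identify the gluing with $\mathsf{Com}$ geometrically as the maps $\Gamma_{2,n}\to\Gamma_{2,n'}$ induced by the surjection of marked points, which is exactly $H^\ast(R_2)^{\tensor}(f)$ on fibers. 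For the second I would test small cases, such as $H^\ast(\Gamma_{2,1})$ and $H^\ast(\Gamma_{2,2})$ from \cite{CHKV}, to fix the parities.
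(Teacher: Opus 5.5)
Your overall route is the paper's. You run the Hochschild--Serre spectral sequence of $1\to F_2^{n}\to\Gamma_{2,n}\to GL_2(\mathbb{Z})\to 1$, use naturality along point-doubling maps to match the $\mathbf{FS}$-structure with cup products on $H^\ast(R_2)^{\tensor}$, and then apply Lemma \ref{pulloutc}. The paper cites \cite[Theorem 3.10]{CHKV} for degeneration and \cite[Lemma 3.8]{CHKV} for $H^1(GL_2(\mathbb{Z});\tilde H^\ast(R_2)^{\tensor q})$. You replace these with the rational vcd and with Schur--Weyl plus Eichler--Shimura. The two-column degeneration argument is fine, and your bookkeeping $V_\mu\tensor sgn_m=V_{(2^i,1^{m-2i})}$, $\mathbb{S}_{(m-i,i)}(\mathbb{Q}^2)=\det^i\tensor\mathrm{Sym}^{m-2i}$ is right.

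Two small corrections:
\begin{itemize}
\item Drop the hedge about $Aut(F_2)$ and $F_2^{n-1}$. The kernel over $Out(F_2)$ is exactly $F_2^n$ since $Inn(F_2)\cong F_2$. Moreover $Aut(F_2)$ has vcd $2$, which would break your degeneration argument.
\item The homomorphisms realizing the $\mathbf{FS}$-structure go from fewer to more marked points. This is the splitting $\epsilon$ of \cite{CHKV}, which is the diagonal on kernels, and its identification with operadic gluing is \cite[Section 6.1]{CHKV}.
\end{itemize}

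The first real problem is your reason for ``no extension problems.'' Arity-wise semisimplicity does not split extensions of $\mathbf{FS}$-modules. Already $\mathbb{Q}$ in arities $1$ and $2$, with $\bar 2\to\bar 1$ acting by the identity, is a non-split extension. The correct reason is that $-I$ is central in $GL_2(\mathbb{Z})$ and acts on $(H^\ast(R_2)^{\tensor n})_m$ by $(-1)^m$, so $E_2^{p,m}=0$ for $m$ odd. Hence in each total degree $d$ at most one of $E_2^{0,d}$, $E_2^{1,d-1}$ is nonzero, and $H^d(\Gamma_{2,n})$ is identified with it by a natural edge map. This is what underlies the paper's ``$H^d(\Gamma_{2,n})\cong E^2_{q,1}$.''

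The second gap is that the parity of $i$ receiving the Eisenstein class is part of the statement, and testing $\Gamma_{2,1},\Gamma_{2,2}$ cannot settle it. Those groups only see $q\le 2$, i.e.\ $(i,k)=(0,2)$, and nothing in your sketch shows the answer is independent of $k$. Either cite \cite[Lemma 3.8]{CHKV} as the paper does, or argue at the cusp as follows.
\begin{itemize}
\item For even $k\ge 2$, restriction identifies $H^1(SL_2(\mathbb{Z});\mathrm{Sym}^k)$ modulo parabolic cohomology with $H^1(\langle T\rangle;\mathrm{Sym}^k)\cong(\mathrm{Sym}^k)_T$. This line is spanned by the class of $y^k$, where $Tx=x$, $Ty=x+y$.
\item $\epsilon=\mathrm{diag}(1,-1)$ conjugates $T$ to $T^{-1}$, and $c(T^{-1})=-T^{-1}c(T)$ for a cocycle $c$. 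So $\epsilon$ acts on this line by $-(-1)^k=-1$, uniformly in $k$.
\item The parabolic part contributes $\dim\op{S}_{k+2}$ to each $\epsilon$-eigenspace.
\item Twisting by $\det^i$ multiplies the $\epsilon$-action by $(-1)^i$. So the Eisenstein class is $GL_2(\mathbb{Z})$-invariant exactly when $i$ is odd.
\end{itemize}
With $k=q-2i$ this yields $w_{i,q}=\dim\op{M}_{q-2i+2}$ for $i$ odd and $\dim\op{S}_{q-2i+2}$ for $i$ even, as claimed. The $\det^m$ discrepancy between $H^1(F_2)$ and the standard representation is irrelevant, because only even $m$ contribute.
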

\begin{proof}  The key ingredients are in \cite{CHKV}. 
	
For  $n\geq 1$, define the natural projection $\pi\colon \Gamma_{g,n+1}\to \Gamma_{g,n}$ as follows.  If $X_{g,n+1}$ is a graph of genus $g$ with $n+1$ labeled marked points then there is an inclusion map $hAut_\partial(X_{g,n+1}) \hookrightarrow hAut_{\partial^\prime}(X_{g,n+1}) $ where the subscript $\partial$ indicates that all the boundary points are fixed pointwise, and ${\partial^\prime}$ indicates that only the first $n$ points must be fixed.  Applying $\pi_0$ to this inclusion gives the map $\pi$.  It is shown in \cite[Proposition 2.2]{CHKV} that this map is surjective and admits a splitting, which we denote by $\epsilon\colon \Gamma_{g,n} \to \Gamma_{g,n+1}$.  Upon passage to homology, $\epsilon$ induces the assembly map which coincides with the modular operadic composition $\xycirc{0}{n}$, see \cite[Section 6.1]{CHKV}.

Now specialize to genus $g=2$.	Let $F_2$ denote the free group on two generators. 
Recall there is an isomorphism of groups $GL_2(\mathbb{Z})\cong Out(F_2)\cong \Gamma_{2,0}$, and projection $\Gamma_{2,n}\to \Gamma_{2,0}$ is part of a short exact sequence with kernel $(F_2)^n$.  Consider two such sequences and the following maps between them: 
$$
\xymatrix{	1  \ar[r] & \ar[d]^{diag} (F_2)^{n} \ar[r] &\ar[d]^{\epsilon} \Gamma_{2,n} \ar[r] & GL_2(\mathbb{Z}) \ar[r] \ar[d]^{=}& 1	\\  1 \ar[r]  & (F_2)^{n+1} \ar[r] & \Gamma_{2,n+1} \ar[r]  & GL_2(\mathbb{Z}) \ar[r]  & 1 }
$$
Here diag refers to putting the $n^{th}$ entry of the source into entries $n$ and $n+1$ in the target, with inclusion for all other entries.  One may check directly from the definition of $\epsilon$ (given in the proof of Lemma 2.2 in \cite{CHKV}) that this diagram commutes.


We then consider the Hochschild-Serre spectral sequences associated to these short exact sequences.  To simplify the notation, let us assume that $d=q+1$ is odd; the even degree case follows by the same logic. Invoking \cite[Theorem 3.10]{CHKV}, these spectral sequences degenerate at the $E^2$ page, and moreover $H^d(\Gamma_{2,n}) \cong E^2_{q,1} = H^{1}(GL_2(\mathbb{Z}), H^{q}((F_2)^n))$.  As such, naturality of the spectral sequence tells us that these isomorphisms take the $\mathbf{FS}$-module structure on  $H^d(\Gamma_{2,\ast})$ coming from the modular co-operad $H^\ast(\Gamma)$ to the $\mathbf{FS}$-module structure induced on cohomology from the $\mathbf{FS}^{op}$-module structure on the Cartesian powers of $F_2$ induced by the diagonal maps.

Upon passage to cohomology $H^q((F_2)^n)\cong (H^\ast(R_2)^{\tensor n})_q$, the diagonal map returns the cup product, and so this latter $\mathbf{FS}$-module structure coincides with that in Equation $\ref{permex}$ above (in the case $g=2$). 
After applying the isomorphism 
$$
H^1(GL_2(\mathbb{Z});\mathsf{C}( \tilde{H}^\ast(R_2)^{\tensor q})) =  \mathsf{C}(H^1(GL_2(\mathbb{Z}); \tilde{H}^\ast(R_2)^{\tensor q}))
$$
from Lemma $\ref{pulloutc}$ above, 
the Proposition reduces to the computation of 
$H^1(GL_2(\mathbb{Z}); \tilde{H}^\ast(R_2)^{\tensor q}) $, for which we appeal to \cite[Lemma 3.8]{CHKV}.
\end{proof}

\subsection{Filtration of the Feynman transform.}
Taking the cobar construction of the decomposition in Proposition $\ref{g2dec}$ allows us to study the Feynman transform of Lie graph homology.  To make this precise, we introduce the following filtration on the Feynman transform following \cite{WardMP}.

Given a modular operad $\op{M}$, we filter the Feynman transform: 
$$
F_{h}(\mathsf{FT}(\op{M})) = \ds\bigoplus_{ \sum g(v) \leq h} \op{M}(\gamma)\tensor_{Aut(\gamma)} det(\gamma).
$$
Since the differential expands edges, the statistic $\sum g(v)$ can not increase upon applying the differential, so for each $(g,n)$ we have a filtered chain complex $F_{h-1}(\mathsf{FT}(\op{M})(g,n))\subset F_{h}(\mathsf{FT}(\op{M})(g,n))$ with filtration degree bounded by $0\leq h \leq g$.

We would like to apply this filtration in the case $\op{M}= H_\ast(\Gamma)$.  Before doing so, let us recall the following technical result.  A homogeneous vector in $\mathsf{FT}(H_\ast(\Gamma))$ is given by a graph along with a class in some $H_d(\Gamma_{g(v),n(v)})$ associated to each vertex.  In this context, we say the vertex has degree $d$.  Given an edge adjacent to a single vertex, often called a tadpole, we declare the degree of tadpole to be the degree of the vertex.  Then:

\begin{lemma}\cite[Lemma 3.6]{WardLie}  The subcomplex of $\mathsf{FT}(H_\ast(\Gamma))(g,n)$ spanned by graphs containing any degree $0$ tadpoles and/or degree $0$ vertices of higher genus is acyclic.
\end{lemma}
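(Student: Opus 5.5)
The plan is to use a filtration that kills all edge expansions except the ``loop'' ones. On the associated graded, each degree $0$ vertex then contributes a two-term local complex whose differential is an isomorphism.

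First I would record the degree $0$ part of $H_\ast(\Gamma)$. Each $H_0(\Gamma_{g,n})\cong\mathbb{Q}$ is the trivial $S_n$-representation, since $\Gamma_{g,n}$ is connected as a space of homotopy classes. The self-gluing $\circ_{ij}\colon H_0(\Gamma_{g,n+2})\to H_0(\Gamma_{g+1,n})$ is induced by a group homomorphism, so it sends the generator to the generator and is an isomorphism. Thus degree $0$ classes behave like the commutative modular operad.

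Next I would filter $\mathsf{FT}(H_\ast(\Gamma))(g,n)$ by the number of vertices. The differential is dual to edge contraction, so it expands edges. Expanding a non-loop edge raises $|V|$, while expanding a tadpole (dual to a self-gluing) keeps $|V|$ fixed. Hence on the associated graded only tadpole expansions survive. The subcomplex in the statement is a union of associated graded pieces, since the condition is tested vertex by vertex and tadpole expansion only alters data at one vertex. So it suffices to prove that the associated graded of this subcomplex is acyclic. Filtrations by $|V|$ are bounded for fixed $(g,n)$, so the spectral sequence converges.

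For fixed underlying graph data, I would group summands by their ``core'': the graph with all tadpoles at degree $0$ vertices removed, remembering at each degree $0$ vertex $v$ the integer $k_v=g(v)+t_v$, where $t_v$ is its number of tadpoles. I must check two points.
\begin{itemize}
\item At a degree $0$ vertex, two tadpoles are interchanged by an automorphism that acts by $-1$ on $det(\gamma)$ and trivially on the invariant degree $0$ class. So those summands vanish after taking $Aut(\gamma)$-coinvariants, and $t_v\in\{0,1\}$.
\item Reversing the two flags of a single tadpole acts trivially on both factors, so it imposes no constraint.
\end{itemize}
Hence, before coinvariants, the associated graded is a tensor product over vertices. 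A degree $0$ vertex with $k_v\ge 1$ contributes the complex
$$\mathbb{Q}\cdot(\text{genus } k_v,\ t_v=0)\longrightarrow \mathbb{Q}\cdot(\text{genus } k_v-1,\ t_v=1),$$
whose map is dual to the self-gluing and is therefore an isomorphism. Stability holds because genus $k_v-1$ with two extra flags is stable. The subcomplex consists exactly of summands in which at least one degree $0$ vertex has $k_v\ge1$, i.e.\ carries positive genus or a tadpole. By K\"unneth, each such tensor product is acyclic. Since we work over $\mathbb{Q}$, passing to $Aut$-coinvariants is exact, and the associated graded is acyclic. Therefore the subcomplex is acyclic.

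The main obstacle is the bookkeeping of signs and automorphisms: checking that $Aut(\gamma)$ preserves the decomposition by cores, and that the sign conventions (edges of degree $1$ in $det$, untwisted $H_\ast(\Gamma)$) really force $t_v\le 1$. If the sign instead went the other way, each degree $0$ vertex would give a truncated Koszul-type complex in $(g(v),t_v)$. I would then show acyclicity of that complex by an explicit contracting homotopy that converts one tadpole into one unit of genus.
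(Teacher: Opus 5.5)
The paper does not prove this lemma; it cites it from \cite{WardLie}, so there is no in-paper argument to compare yours against. Your proposal is a correct, self-contained proof in outline. Filtering by $|V|$ is bounded ($|V|\le 2g-2+n$) and leaves only tadpole creation in the associated graded. Two tadpoles at a degree $0$ vertex are killed by the odd swap, and each degree $0$ vertex with $k_v\ge 1$ contributes a two-term complex whose differential is an isomorphism. Künneth and exactness of coinvariants over $\mathbb{Q}$ then finish the argument.

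What this route buys is transparency about the inputs. The only facts about $H_\ast(\Gamma)$ it uses are that the degree $0$ part is the commutative modular operad (one-dimensional, trivial symmetric group actions, invertible self-gluings), together with the $\det(E)$ sign convention. So the lemma is really a statement about the Feynman transform of any modular operad with that degree $0$ part. It is the chain-level analogue of the contractibility of the loop/weight locus in tropical moduli space.

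There are two points to tighten.

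\emph{Definition of the core.} Your core strips tadpoles only at degree $0$ vertices. However, tadpole creation at positive-degree vertices also survives in the associated graded: the dual self-gluing $H^d(\Gamma_{g,m})\to H^d(\Gamma_{g-1,m+2})$ need not vanish (hair-gluing constructions of nontrivial classes use exactly such maps). So the associated graded does not split over cores as you defined them. The fix is to strip tadpoles at every vertex and record $k_v=g(v)+t_v$ at every vertex. First take coinvariants by the local groups $\mathbb{Z}_2\wr S_{t_v}$; this is where $t_v\le 1$ at degree $0$ vertices actually comes from. The summand for a core $c$ is then $\bigl(\bigotimes_v C_v\otimes\det(E(c))\bigr)_{Aut(c)}$. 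The positive-degree factors $C_v$ are irrelevant by Künneth.

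\emph{The inputs about $H_0$.} Invertibility of the self-gluings on $H_0$ is essential, not cosmetic: if some self-gluing vanished on $H_0$, the local complex would have zero differential and the lemma would fail. This paper does not establish it; it only records the match with a group homomorphism for the non-self gluing $\xycirc{0}{n}$. You should cite \cite{CHKV}'s identification of the self-gluing with the homomorphism $\Gamma_{g,m+2}\to\Gamma_{g+1,m}$ that identifies two marked points, rather than assert it.

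Likewise, triviality of the $S_m$-action on $H_0(\Gamma_{g,m})$ is needed; with a sign twist, single tadpoles would die and the argument would collapse. It holds because relabelings act through group isomorphisms, which act trivially on $H_0$, transported through the identification with $H_\ast(\mathsf{FT}(\mathsf{sLie}))$. It does not follow from any connectedness statement.
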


We write $\overline{\mathsf{FT}}(H_\ast(\Gamma))(g,n)$ for the associated quotient complex, and observe that the filtration defined above descends to this quotient.  When $g=2$, let us specify the following notation for the associated graded:

\begin{definition}\label{grdef}  For $h\in\{0,1,2\}$ define a graded symmetric sequence $\mathsf{gr}_h(\mathsf{FT}_\Gamma)$ by
	$$\mathsf{gr}_h(\mathsf{FT}_\Gamma):= F_h \overline{\mathsf{FT}}(H_\ast(\Gamma))(2,-)/F_{h-1} \overline{\mathsf{FT}}(H_\ast(\Gamma))(2,-). $$
\end{definition}
Let's unpack the three cases of this definition:
\begin{itemize}
	\item $\mathsf{gr}_0(\mathsf{FT}_\Gamma)(n)$ is spanned by tadpole free graphs having $\beta(\gamma) = 2$ and $n$ legs, with each vertex labeled by $\mathbb{Q}=\mathsf{Com}(v)$.  The differential is edge expansion. This complex is quasi-isomorphic to the Feynman transform of the commutative operad in biarity $(2,n)$, and after \cite{CGP2}, computes the homology of the tropical moduli space $\Delta_{2,n}$.  
	
	\item $\mathsf{gr}_1(\mathsf{FT}_\Gamma)(n)$ is spanned by graphs having $\beta(\gamma) = 1$ and $n$ legs, along with a distinguished vertex of genus $1$, carrying a label in $H^\ast(\Gamma_{1,v})$ of non-zero degree.  The differential is expansion of non-tadpole edges.
	
	\item $\mathsf{gr}_2(\mathsf{FT}_\Gamma)(n)$ is spanned by $n$-trees ($\beta(\gamma) = 0$) of two types -- those with one distinguished vertex, carrying a label in $H^\ast(\Gamma_{2,v})$ of non-zero degree, and those with two special vertices $v_1$,$v_2$, each carrying labels in $H^\ast(\Gamma_{1,v_i})$ of non-zero degree.  The differential is again expansion of non-tadpole edges, including the possible expansion of the distinguished genus $2$ vertex into two adjacent genus $1$ vertices, preserving the total degree of the vertex label(s).
\end{itemize}

\subsection{Decomposition of the Tree Part $\mathsf{gr}_2(\mathsf{FT}_\Gamma)(n)$.}
The goal of this subsection is to derive an explicit decomposition of $H^\ast(\mathsf{gr}_2(\mathsf{FT}_\Gamma)(n))$ in terms of the $\Phi[n,\lambda]$.  To simplify the notation, in this subsection only, we write
$$\mathsf{T}_\Gamma(n) : =\mathsf{gr}_2(\mathsf{FT}_\Gamma)(n)$$
where $\mathsf{T}_\Gamma$ reminds us that we consider the span of trees, having either 1 or 2 distinguished vertices labeled by cohomology classes of the appropriate $\Gamma$.  Since the differential in the Feynman transform is by edge expansion, the trees with two genus $1$ vertices form a subcomplex.   In addition, since the Feynman transform differential preserves the total degree of the vertex labels, each chain complex $\mathsf{T}_\Gamma(n) $ splits over internal degree.  Let us denote this decomposition by  $\mathsf{T}_\Gamma(n)_d$.  

Thus, for each $d$ we have a short exact sequence of chain complexes:
\begin{equation}\label{ses}
	0 \to \mathsf{T}^{1,1}_\Gamma(n)_d\to \mathsf{T}_\Gamma(n)_d \to \mathsf{T}^{2}_\Gamma(n)_d \to 0 
\end{equation}
such that 
$\mathsf{T}^{2}_\Gamma(n)_d = \Omega(H^d(\Gamma_{2,\ast}))(n)$.

\begin{lemma}\label{uformula}  For $d$ odd there is an isomorphism of graded $S_n$-modules
	$$ 
H^\ast(\mathsf{T}_\Gamma(n)_d)\cong \bigoplus_{a+b=d-1} u_{(a,b)}\Sigma^{d}\Phi[n,(a,b)^\ast],
	$$
where
	$$u_{(a,b)} =
	\begin{cases}
		\left\lfloor \ds\frac{a-b-2}{4}\right\rfloor - \left\lfloor \ds\frac{a-b}{6}\right\rfloor+
		\delta_{a,b} & \text{ if }  a\equiv b \equiv 0 \text{ mod } 2 \\ 
		& \\
		\left\lfloor \ds\frac{a-b-2}{4}\right\rfloor - \left\lfloor \ds\frac{a-b}{6}\right\rfloor+
		1 & \text{ if }  a\equiv b \equiv 1 \text{ mod } 2 \\ 	
		& \\
		0 & \text{ else.}
	\end{cases}
	$$

\end{lemma}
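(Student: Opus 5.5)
Since $d$ is odd, I will first show that the subcomplex $\mathsf{T}^{1,1}_\Gamma(n)_d$ in the short exact sequence \eqref{ses} vanishes. A tree in $\mathsf{T}^{1,1}_\Gamma(n)_d$ has two genus $1$ vertices carrying labels in $H^{m_1}(\Gamma_{1,\ast})$ and $H^{m_2}(\Gamma_{1,\ast})$ with $m_1+m_2=d$. By Lemma \ref{g1fa}, each $H^{m}(\Gamma_{1,\ast})$ is nonzero only when $m\geq 2$ is even. Hence $m_1+m_2$ is even, so no such labelled tree exists in odd total degree $d$, and $\mathsf{T}^{1,1}_\Gamma(n)_d=0$. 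The sequence \eqref{ses} then gives
$$\mathsf{T}_\Gamma(n)_d\cong \mathsf{T}^{2}_\Gamma(n)_d=\Omega(H^d(\Gamma_{2,\ast}))(n),$$
up to the overall shift by the degree $d$ of the vertex label.

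Next I will substitute the decomposition from Proposition \ref{g2dec}. Write $d=q+1$, so $q$ is even. The $\mathbf{FS}$-module is then $\bigoplus_{0\leq i<q/2} w_{i,q}\mathsf{C}_{(2^i,1^{q-2i})}$. The cobar construction is additive and commutes with this direct sum, and $H^\ast(\Omega(\mathsf{C}_\lambda))=\Phi[-,\lambda]$ by definition. This gives
$$H^\ast(\mathsf{T}_\Gamma(n)_d)\cong\bigoplus_{i} w_{i,q}\Sigma^{d}\Phi[n,(2^i,1^{q-2i})].$$
Here I must check that the grading convention for the label, which sits in degree $d$ inside the Feynman transform, produces exactly $\Sigma^{d}$ relative to the cobar grading. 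This is a bookkeeping check against Definition \ref{grdef}.

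Then I reindex. The partition $(2^i,1^{q-2i})$ is the conjugate $(a,b)^\ast$ with $a=q-i$ and $b=i$, so $a+b=q=d-1$, $a-b=q-2i$, and $b\equiv i$ mod $2$. Since $q$ is even, $a\equiv b$ mod $2$, which accounts for the vanishing in the ``else'' case. The condition $i<q/2$ becomes $a>b$. The modular weight is $q-2i+2=a-b+2=:k$.

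The remaining step, which is the only real computation, is the numerical identity $u_{(a,b)}=w_{b,q}$. For even $k\geq 2$ I will use the classical dimension formula $\dim \op{M}_k=\lfloor k/4\rfloor-\lfloor (k-2)/6\rfloor$; I have spot-checked it at $k=2,4,12,14$, and it can be proved from the standard $\lfloor k/12\rfloor$ description by checking residues mod $12$. Rewritten in terms of $a-b$, this is $\lfloor (a-b-2)/4\rfloor-\lfloor (a-b)/6\rfloor+1$, which matches the case $b$ odd. For $b$ even we need $\dim\op{S}_k=\dim\op{M}_k-1$ when $k\geq 4$, which matches the formula without the $+1$. Finally, the boundary case $a=b$ (that is, $k=2$, $i=q/2$) is excluded from the sum. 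I will confirm that the stated formula gives $0$ there in both parities: this is exactly the role of the $\delta_{a,b}$ term in the even case, and of the $+1$ cancelling $\lfloor -2/4\rfloor=-1$ in the odd case.
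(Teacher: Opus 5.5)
Your proposal is correct and follows the paper's proof essentially step for step: you kill $\mathsf{T}^{1,1}_\Gamma(n)_d$ by the parity argument from Lemma \ref{g1fa}, apply $\Omega$ to the decomposition in Proposition \ref{g2dec}, reindex via $(a,b)^\ast=(2^b,1^{a-b})$ so that $u_{(a,b)}=w_{b,a+b}$, and match dimension formulas for modular forms (your $\lfloor k/4\rfloor-\lfloor (k-2)/6\rfloor$ is the paper's expression with $k=a-b+2$). Your explicit check that the formula also gives $0$ when $a=b$ is odd is a small addition that the paper leaves implicit.
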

\begin{proof}
	When $d$ is odd, the contribution from the genus $1$ labels vanish (Lemma $\ref{g1fa}$), and we have $\mathsf{T}_\Gamma(n)_d=\mathsf{T}^{2}_\Gamma(n)_d = \Omega(H^d(\Gamma_{2,\ast}))(n)$.  So it suffices to apply $\Omega$ to the expression in Proposition $\ref{g2dec}$.
	
	Note $(a,b)^{\ast} = (2^{b}, 1^{a-b})$.  Setting $d=q+1$ (so $q$ is even), $i=b$ and $a-b=q-2i$, we determine the coefficient $u_{a,b}$ of $\Phi[-,(a,b)^\ast]$ appearing in $H^\ast(\mathsf{T}_\Gamma(n)_d)$ to be $w_{b,a+b}$.  Since $q=a+b$ is even, this coefficient appears only if $a \equiv b$ mod $2$.  

When $a$ and $b$ are both odd, so $i$ is odd, we find $w_{b,a+b} = \text{dim}(\op{M}_{a-b+2})$.  When $a$ and $b$ are both even but distinct we find $w_{b,a+b}=\text{dim}(\op{S}_{a-b+2})$  Recalling that for $a-b-2\geq 0$, 
$$  dim(\op{M}_{a-b+2})=
1+ dim(\op{S}_{a-b+2}) = 1 + \lfloor (a-b-2)/4 \rfloor - \lfloor (a-b)/6 \rfloor
$$
we see this matches the statement.  Finally, if $a$ and $b$ are even and $a=b$, the partition $(b,b)^{\ast} = (2^b)$ should appear with coefficient $0$ according to Proposition $\ref{g2dec}$.  We add $\delta_{b,b}:=1$ in this case to account for the negative appearing in the floor function $\lfloor -2/4\rfloor +1=0$.
\end{proof}

Next we consider the case $d$ even, which we subdivide into two cases considering $d$ mod $4$.

\begin{lemma}\label{mod2}  If $d\equiv 2$ mod $4$ then there is an $S_n$-equivariant quasi-isomorphism:
	$$
 \mathsf{T}_\Gamma(n)_d \stackrel{\sim}\to \ds\bigoplus_{\substack{ r > s \geq 2\\ r,s \text{ both even }  \\ r+s =d}}
 \Sigma^{d+1}\Omega(\mathsf{C}_{(1^r)}\circ \mathsf{C}_{(1^s)})(n)
	$$	
\end{lemma}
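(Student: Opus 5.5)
We plan to use the short exact sequence (\ref{ses}). When $d\equiv 2$ mod $4$, Proposition \ref{g2dec} gives $H^d(\Gamma_{2,\ast})=0$, so $\mathsf{T}^2_\Gamma(n)_d=0$ and $\mathsf{T}_\Gamma(n)_d=\mathsf{T}^{1,1}_\Gamma(n)_d$. The statement therefore concerns only trees with two distinguished genus~$1$ vertices $v_1,v_2$. By Lemma \ref{g1fa}, these vertices carry labels in $H^{r'}(\Gamma_{1,\ast})=\widetilde{\mathsf{C}}_{1^{r'+1}}$ and $H^{s'}(\Gamma_{1,\ast})=\widetilde{\mathsf{C}}_{1^{s'+1}}$ with $r',s'\geq 2$ even and $r'+s'=d$. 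The differential never changes these labels. It only expands non-tadpole edges, and an expansion at $v_i$ acts through the $\mathbf{FS}$-structure of $\widetilde{\mathsf{C}}_{1^{\ast}}$. Hence the complex splits over unordered pairs $\{r',s'\}$. Since $d\equiv 2$ mod $4$ and both are even, $r'/2+s'/2$ is odd, so $r'\neq s'$. We order the pair as $r>s\geq 2$, which removes any swap automorphism between the two vertices.

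Next we describe the summand for a fixed pair $(r,s)$. A tree with two special vertices has a unique path between them. Collapsing that path to a single vertex $w$ produces a pointed $n$-tree. The distinguished vertex of this pointed tree is labeled by
\[
\widetilde{\mathsf{C}}_{1^{r+1}}(a^{-1}(v_1))\tensor\widetilde{\mathsf{C}}_{1^{s+1}}(a^{-1}(v_2))
\]
together with the path data. Our main idea is to identify this path piece, i.e.\ the two genus~$1$ vertices joined by a chain of neutral vertices, with a bar/cobar-type resolution of an external product of $\mathbf{FS}$-modules. Concretely, one end of the chain carries a distinguished flag at each $v_i$ that is attached to the other vertex, and one sums over chains.

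We will use an explicit chain map. On the summand where $v_1$ and $v_2$ are adjacent by a single edge $e$, we contract $e$. The flags of $v_1$ and $v_2$ other than $e$'s half-edges then form $a^{-1}(w)$. We realize
\[
\widetilde{\mathsf{C}}_{1^{r+1}}\tensor\widetilde{\mathsf{C}}_{1^{s+1}}
\]
as unlabeled markings: $r+1$ markings on the flags at $v_1$ and $s+1$ markings at $v_2$, where the cokernel relation lets us move a marking off the edge flag. Discarding the markings on the edge, one of each color, leaves $r$ markings of one color and $s$ of the other on $a^{-1}(w)$. This is exactly a basis element of $\mathsf{C}_{(1^r)}\circ\mathsf{C}_{(1^s)}$ in the two-color description of Definition \ref{2colordef}, dually. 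All other summands, where $v_1,v_2$ are non-adjacent, map to zero. The degree bookkeeping gives the shift $\Sigma^{d+1}$: $d$ comes from the labels, and $1$ comes from the edge $e$ that is lost.

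To check that the map is a quasi-isomorphism we filter by the length of the path between $v_1$ and $v_2$, or equivalently by the number of edges not on this path. On the associated graded, the branches away from the path contribute the same cobar-type complex on both sides. The path part reduces to a complex indexed by chains of neutral trivalent-or-higher vertices between the two genus~$1$ vertices, with fixed external flags. We expect this to be acyclic except in length one, by the same Koszulity argument as in Lemma \ref{CEthm}: it is the cobar resolution of a composite along a single edge. One must also verify that the residual marking relation identifies the length-one piece with $\mathsf{C}_{(1^r)}\circ\mathsf{C}_{(1^s)}$ rather than with its $\widetilde{\ }$ quotient.

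We expect the main obstacle to be exactly this identification. It requires showing that the map is dg, since expanding edges at $v_i$ redistributes markings onto the path edge, and that the cokernel relations defining $\widetilde{\mathsf{C}}_{1^{m}}$ match, through the edge, the free $\mathsf{Inj}$-type module on the other side. We would first check it by hand in low arity, e.g.\ $n=r+s$, comparing Euler characteristics as $S_n$-characters.
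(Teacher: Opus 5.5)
Your outline is the paper's own argument. You reduce to $\mathsf{T}^{1,1}_\Gamma$, split over $r>s$, read the labels as markings on non-path flags, contract the path edge when $v_1,v_2$ are adjacent (zero otherwise), and filter the cone by distance. The gap is the step you single out as the main obstacle, and it is not merely unverified. On the cobar side, where both differentials expand edges, this contraction map $\eta$ is not a chain map.

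Take a length-one tree $t$ with an unmarked flag $u$ at $v_1$ and a flag $y$ at $v_2$. For example: $r=4$, $s=2$, $n=7$, legs $1,\dots,5$ at $v_1$ with $1,\dots,4$ marked, legs $6,7$ at $v_2$, and $u=5$, $y=6$.
\begin{itemize}
\item In $\Omega(\mathsf{C}_{(1^r)}\circ\mathsf{C}_{(1^s)})$, the expansion of the collapsed vertex that pulls $\{u,y\}$ onto a new neutral vertex is nonzero, since it moves only one marked flag. So it occurs in $d\eta(t)$.
\item Nothing in $dt$ collapses to it, because in $\mathsf{T}^{1,1}_\Gamma$ an off-path neutral vertex hangs from only one of $v_1,v_2$.
\end{itemize}
The paper writes down the same $\eta$ and calls compatibility immediate, but the check only works on the dual side.

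The repair is to dualize. On linear duals, where differentials contract edges, ``contract the path edge if the path has length one'' is a chain map. The two contractions of a length-two path give trees with the same collapse and opposite signs. Contracting a length-one path lands in $H_d(\Gamma_{2,\ast})=0$. Transposing, the chain map on the cobar side goes the other way, $\Omega(\mathsf{C}_{(1^r)}\circ\mathsf{C}_{(1^s)})(n)\to\mathsf{T}^{1,1}_\Gamma(n)_{r,s}$. It splits the distinguished vertex into an edge $v_1 - v_2$, with red flags at $v_1$, blue flags at $v_2$, and the unmarked flags distributed in all possible ways. Since $\mathbb{Q}[S_n]$ is semisimple, a quasi-isomorphism in this direction yields one in the stated direction.

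For the quasi-isomorphism check, filtering by path length is not equivalent to filtering by the number of off-path edges. Every differential term adds exactly one edge. The first filtration keeps the off-path expansions in the associated graded; the second keeps only the path-lengthening ones. You need the second, and the map preserves the number of off-path edges.

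A summand of the associated graded is then fixed by a collapsed tree with red, blue and unmarked flags $R\sqcup B\sqcup U$ at its distinguished vertex. It consists of ordered partitions $(B_0,\dots,B_p)$ of $U$, recording the unmarked flags at $v_1$, at the intermediate path vertices, and at $v_2$. Here $B_1,\dots,B_{p-1}$ are nonempty and $B_0,B_p$ may be empty. The differential splits (dually, merges) adjacent blocks.

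This is the cellular (co)chain complex of the cube $[0,1]^U$ in its standard triangulation, and the map is its (co)augmentation. The summand is therefore acyclic because the cube is contractible. This is the paper's ``subdivided cube,'' and it should replace your appeal to Koszulity as in Lemma \ref{CEthm}.

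Finally, the cokernel relation moves a marking \emph{onto} the path flag, not off it. So $H^r(\Gamma_{1,a^{-1}(v_1)})$ has basis the $r$-subsets of the non-path flags, with no residual relation. Your worry about landing in a $\widetilde{\mathsf{C}}$ quotient therefore does not arise.
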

\begin{proof}  We recall (Proposition $\ref{g2dec}$) that when $d\equiv 2$ mod $4$, $\mathsf{T}^2_\Gamma(n)_d=0$, and so 	$\mathsf{T}^{1,1}_\Gamma(n)_d =	\mathsf{T}_\Gamma(n)_d $.  The chain complex $\mathsf{T}^{1,1}_\Gamma(n)_d$ splits over the labels of the degrees of the special vertices; for a pair of numbers $r\geq s$, denote the associated summand by $\mathsf{T}^{1,1}_\Gamma(n)_{r,s}$.  By Lemma $\ref{g1fa}$, this summand is non-zero when $r$ and $s$ are both even, and since $r+s=d \equiv 2$ mod $4$, we must have $r>s$.  To prove the claim, it thus suffices to construct a quasi-isomorphism of degree $-d-1$, 
$$\eta\colon\mathsf{T}^{1,1}_\Gamma(n)_{r,s} \stackrel{\sim}\to \mathsf{T}(n,r,s)^\ast,$$
where here $\mathsf{T}(n,r,s)^\ast$ is the linear dual of the complex $\mathsf{T}(n,r,s)$ defined above (Definition $\ref{2colordef}$).

To give the construction it will help to first look closer at $\mathsf{T}^{1,1}_\Gamma(n)_{r,s}$.  This chain complex is spanned by $n$-trees with two special vertices, each carrying a label by the appropriate $H^r(\Gamma_{1,a^{-1}(v_1)})$ and $H^s(\Gamma_{1,a^{-1}(v_2)})$.  The set $a^{-1}(v_1)$ has a distinguished element, namely the leg adjacent to $v_1$ in the direction of $v_2$.  In the presence of a such a base point, the cohomology $H^r(\Gamma_{1,a^{-1}(v_1)})$ has a basis given by subsets of $a^{-1}(v_1)$ of size $r$ which don't contain the base point (see \cite[Lemma 3.6]{WardW}).  So we may specify the cohomology class labeling this vertex by marking $r$ legs adjacent to $v_1$, which don't lie on the path connecting $v_1$ and $v_2$.  These markings are indistinguishable and may formally be considered as carrying degree $1$.  Similarly we may mark $s$ of the legs around $v_2$, but not on the path connecting $v_1$ to $v_2$.

To define $\eta$, we first count the number of edges on the unique shortest path connecting the two special vertices on the tree $(\mathsf{t},v_1\,v_2)$.  If there is more than 1 edge, we define the restriction of $\eta$ to this summand to be $0$.  If there is exactly one edge, we contract it to produce a tree with one distinguished vertex, having a set of $r+s$ markings which are further subdivided into two sets of size $r$ and $s$ (by different colors say).  This is exactly the description of the homogeneous elements in (the linear dual of) $\mathsf{T}(n,r,s)$ in Definition $\ref{2colordef}$.

The fact that this assignment is compatible with the differentials is immediate, and so it remains to show that it is indeed a quasi-isomorphism.  For this, filter the mapping cone so that the associated graded permits only expansion of edges which increase the distance between $v_1$ and $v_2$ (measured by the number of edges on the shortest possible path connecting them).  The associated graded splits into summands which can be identified with the augmented chains on a simplicial subdivision of a cube, hence has no homology.
\end{proof}

Finally we compute the homology in the case of $\mathsf{T}_\Gamma(n)_d$ when $4|d$.  First we consider the subcomplex $\mathsf{T}_\Gamma^{1,1}(n)_d$.  The analysis is exactly the same as in Lemma $\ref{mod2}$, except here we must consider the case when $r=s$, both even.  For this we define $\mathsf{C}_{(d/2)}\circ^{S_2} \mathsf{C}_{(d/2)}$ to be the sub-$\mathbf{FA}$-module of $\mathsf{C}_{(d/2)}\circ \mathsf{C}_{(d/2)}$ consisting of summands whose corresponding partitions have only even entries.  
We then define $\mathsf{C}_{(d/2)^\ast}\circ^{S_2} \mathsf{C}_{(d/2)^\ast}$  to be sum of the $\mathbf{FA}$-modules corresponding to the adjoint partitions; explicitly
$$\mathsf{C}_{(d/2)^\ast}\circ^{S_2} \mathsf{C}_{(d/2)^\ast} := \bigoplus_{i=0}^{d/4} \mathsf{C}_{(2^{2i}, 1^{d-4i})}.$$
With this definition we prove:
\begin{lemma}\label{0mod4}
	If $d\equiv 0$ mod $4$ then there exists an $S_n$-equivariant quasi-isomorphism
	$$
 \mathsf{T}^{1,1}_\Gamma(n)_d  \stackrel{\sim}\to\Sigma^{d+1}\Omega(\mathsf{C}_{(d/2)^\ast}\circ^{S_2} \mathsf{C}_{(d/2)^\ast})(n) \oplus \ds\bigoplus_{\substack{ r > s \geq 2\\ r,s \text{ both even }  \\ r+s =d}}\Sigma^{d+1}\Omega(\mathsf{C}_{(r)^\ast}\circ \mathsf{C}_{(s)^\ast}) (n).
	$$	
\end{lemma}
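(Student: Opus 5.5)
The plan is to follow the proof of Lemma \ref{mod2} almost verbatim, handling separately the one new feature, the summand with $r=s=d/2$. First, $\mathsf{T}^{1,1}_\Gamma(n)_d$ splits over the unordered pair of degrees $\{r,s\}$ labeling the two genus-one vertices. By Lemma \ref{g1fa} only even $r,s\geq 2$ contribute. For $r>s$ the two special vertices are distinguished by their degrees, and the construction $\eta$ of Lemma \ref{mod2} applies unchanged. Thus $\eta$ kills trees whose special vertices are at distance greater than one, and contracts the edge joining them when the distance is exactly one. This produces a quasi-isomorphism of degree $-d-1$ onto $\mathsf{T}(n,r,s)^\ast=\Omega(\mathsf{C}_{(1^r)}\circ\mathsf{C}_{(1^s)})(n)$, and $(r)^\ast=(1^r)$ gives the second summand of the statement. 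The cube-subdivision filtration argument for the mapping cone is identical.

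For $r=s=d/2$ I would run the same construction. The two vertices now carry labels of the same degree, so there is an automorphism of the underlying tree interchanging $v_1,v_2$, and the chain complex is spanned by trees with unordered special vertices. After contracting the joining edge, the target is a single vertex with $r$ markings of one color and $r$ of another, modulo the swap of colors. The swap acts on the edge determinant, on the two blocks of degree-one markings, and on the ordering of the vertex labels. The same filtration argument then gives a quasi-isomorphism onto the $S_2$-invariants (or anti-invariants, according to the sign) of $\Omega(\mathsf{C}(V_{(1^r)}\circ V_{(1^r)}))(n)$. Since $\Omega$ is additive and $\mathsf{C}$ commutes with taking $S_2$-isotypic parts, it remains to identify the relevant isotypic piece of $V_{(1^r)}\circ V_{(1^r)}=\mathrm{Ind}_{S_r\wr S_2}^{S_{2r}}$ as $\bigoplus_{i=0}^{d/4}V_{(2^{2i},1^{d-4i})}$.

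This identification is where I expect the real work. The plan is to use the plethysm decomposition: $\mathrm{Ind}_{S_r\wr S_2}$ of $V_{(r)}^{\boxtimes 2}$ with the trivial $S_2$ action is $h_2[h_r]=\bigoplus_{j \text{ even}} V_{(2r-j,j)}$. Conjugating, with the sign twists produced by swapping two odd- or even-sized blocks of degree-one markings, yields the partitions $(2r-j,j)^\ast=(2^{j},1^{2r-2j})$ with $j$ even. Writing $j=2i$ gives exactly $\mathsf{C}_{(2^{2i},1^{d-4i})}$, $0\le i\le d/4$, which is the definition of $\mathsf{C}_{(d/2)^\ast}\circ^{S_2}\mathsf{C}_{(d/2)^\ast}$. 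The main obstacle is bookkeeping the signs, namely the edge orientation, the Koszul signs of $2r$ degree-one markings with $r$ even, and the transposition of the labels. These must combine to the symmetric plethysm after conjugation, and not the antisymmetric one. I would check this sign in the smallest case $d=4$, where it should give $\mathsf{C}_{(1^4)}\oplus\mathsf{C}_{(2,2)}$, before writing the general argument.
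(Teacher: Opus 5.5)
Your proposal is correct and follows the paper's route. The paper's proof simply reruns the argument of Lemma~\ref{mod2} and observes that for $r=s=d/2$ the two special vertices are indistinguishable, which yields $\Sigma^{d+1}\Omega(\mathsf{C}_{(d/2)^\ast}\circ^{S_2}\mathsf{C}_{(d/2)^\ast})$; here $\circ^{S_2}$ is defined as the conjugate of the even-entry part $h_2[h_{d/2}]$ of $\mathsf{C}_{(d/2)}\circ\mathsf{C}_{(d/2)}$. Your plethysm step ($h_2[e_r]=\omega(h_2[h_r])$ for $r$ even) supplies the justification the paper leaves implicit. The sign comes out symmetric, as you expected, though not for all the reasons you list. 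These leg-labeled trees have no automorphism exchanging $v_1,v_2$, and the swap permutes no edges, so the edge determinant plays no role; the only sign is the Koszul sign $(-1)^{r^2}=1$ from exchanging two degree-$r$ labels.
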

\begin{proof}  Copy the proof of Lemma $\ref{mod2}$ with the additional observation that when $r=s$ the special vertices $v_1$ and $v_2$ are indistinguishable, hence $\mathsf{T}_\Gamma(n)_{d/2,d/2}\sim \Sigma^{d+1}\Omega(\mathsf{C}_{(d/2)^{\ast}}\circ^{S_2} \mathsf{C}_{(d/2)^{\ast}})$.\end{proof}


Combining the previous two lemmas we find:

\begin{corollary}\label{tformula2} 
 Let $d$ be even and let $a\geq b\geq 0$ be integers such that $a+b=d$.  Define $t_{(a,b)}$	by
 	$$	t_{(a,b)} = 
 \begin{cases}
 	\left\lfloor\ds\frac{(a-b)}{4} \right\rfloor + 1 -\delta_{a,b}-\delta_{b,0} & \text{ if } a\equiv b \equiv 0 \text{ mod 2} \\ 
 	& \\ 
 	\left\lfloor\ds\frac{(a-b)}{4} \right\rfloor  -\delta_{b,0} & \text{ if } a\equiv b \equiv 1 \text{ mod 2} \\
 	& \\ 
 	0 & \text{ else.}
 \end{cases}	
 $$	
Then
		$$ 
		H^\ast(\mathsf{T}_\Gamma(n)_d)= \bigoplus_{\substack{ a\geq b \\ a+b=d }} \Sigma^{d+1}t_{(a,b)}\Phi[n,(a,b)^\ast]
		$$ 
\end{corollary}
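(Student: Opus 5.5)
The plan is to compute $H^\ast(\mathsf{T}_\Gamma(n)_d)$ from the short exact sequence $(\ref{ses})$. The two pieces are controlled by Lemma $\ref{mod2}$ (resp.\ Lemma $\ref{0mod4}$) and by Proposition $\ref{g2dec}$. After that, the remaining task is to convert every $\mathbf{FA}$-module $\mathsf{C}_{(1^r)}\circ\mathsf{C}_{(1^s)}$ into a sum of simple modules $\mathsf{C}_{(a,b)^\ast}$ and count multiplicities.

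For the conversion, note that $V_{(1^r)}\circ V_{(1^s)} = (V_{(r)}\circ V_{(s)})\tensor sgn$. By the special case of the Littlewood--Richardson rule recorded after $(\ref{circnotation})$, for $r\geq s$ this gives
$$\mathsf{C}_{(1^r)}\circ\mathsf{C}_{(1^s)}=\bigoplus_{j=0}^s\mathsf{C}_{(r+j,s-j)^\ast}.$$
Since $\Omega$ is additive and each summand gives $\Phi[n,\lambda]$ by definition, we obtain:
\begin{itemize}
\item For $d\equiv 2$ mod $4$, $H^\ast(\mathsf{T}_\Gamma(n)_d)=\bigoplus\Sigma^{d+1}\Phi[n,(r+j,s-j)^\ast]$, the sum taken over even $r>s\geq 2$ with $r+s=d$ and over $0\leq j\leq s$.
\item For $d\equiv 0$ mod $4$, the subcomplex $\mathsf{T}^{1,1}_\Gamma(n)_d$ contributes the same sum. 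It also contributes the diagonal term $\bigoplus_{i=0}^{d/4}\Sigma^{d+1}\Phi[n,(2^{2i},1^{d-4i})]$, whose entries are the $(d/2+j,d/2-j)^\ast$ with $j$ even.
\end{itemize}

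For $d\equiv 0$ mod $4$, the quotient $\mathsf{T}^2_\Gamma(n)_d=\Omega(\mathsf{C}_{(2^{d/2})})(n)$ has cohomology $\Phi[n,(d/2,d/2)^\ast]$, suitably shifted. I expect the main obstacle to be showing that the connecting homomorphism of $(\ref{ses})$ is an isomorphism from this class onto the summand $\Sigma^{d+1}\Phi[n,(2^{d/2})]$ in the diagonal term. This isomorphism is exactly what produces the $-\delta_{a,b}$ correction.

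To prove it, I would observe that the connecting map is induced by expanding the genus-$2$ vertex into two genus-$1$ vertices joined by an edge. After the edge contraction used in the quasi-isomorphism $\eta$ of Lemma $\ref{mod2}$, this becomes a map of $\mathbf{FA}$-modules $\mathsf{C}_{(2^{d/2})}\to\mathsf{C}_{(d/2)^\ast}\circ^{S_2}\mathsf{C}_{(d/2)^\ast}$. That map is induced by the modular cooperad structure map $H^d(\Gamma_{2,\ast})\to H^{d/2}(\Gamma_{1,\ast})\tensor H^{d/2}(\Gamma_{1,\ast})$, and it is functorial in $\Omega$. Since $\mathsf{C}_{(2^{d/2})}$ is simple and appears exactly once in the target, Schur's lemma reduces the claim to showing that the map is nonzero. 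The first thing I would try is checking nonvanishing in the minimal arity $n=d$ via the Hochschild--Serre description in the proof of Proposition $\ref{g2dec}$. There, the self-gluing corresponds to restricting from $GL_2(\mathbb{Z})$-invariants of $(\tilde H^\ast(R_2)^{\tensor d})$ to a product of two genus-$1$ pieces. The image of the $(2^{d/2})$ isotypic invariant class should be visibly nonzero there. Once this map is a nonzero multiple of the identity on $\Phi[n,(2^{d/2})]$, the long exact sequence cancels the two copies and leaves all other summands intact.

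The final step is bookkeeping. For even $a\geq b$, the copies of $\Phi[n,(a,b)^\ast]$ are indexed by even $s$ with $\max(b,2)\leq s<d/2$, plus one copy from the diagonal term when $a-b\equiv 0$ mod $4$, minus the cancelled copy when $a=b$. Counting these gives $\lfloor (a-b)/4\rfloor+1-\delta_{a,b}-\delta_{b,0}$, where the $\delta_{b,0}$ term reflects the missing value $s=0$. For odd $a,b$, the index $j=s-b$ is odd and the diagonal term contributes nothing, which yields $\lfloor (a-b)/4\rfloor-\delta_{b,0}$. Mixed parities cannot occur because $a+b=d$ is even.
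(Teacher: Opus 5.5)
Your outline is the paper's own proof. You split $H^\ast(\mathsf{T}_\Gamma(n)_d)$ via (\ref{ses}), feed in Lemmas \ref{mod2} and \ref{0mod4}, expand $\mathsf{C}_{(1^r)}\circ\mathsf{C}_{(1^s)}=\bigoplus_{j}\mathsf{C}_{(r+j,s-j)^\ast}$, and count. Your count is correct and reproduces $t_{(a,b)}$ in every parity case: even $s$ with $\max(b,2)\le s<d/2$, plus one diagonal copy when $a,b$ are even and $4\mid d$, minus one when $a=b$. You diverge only on the claim that the connecting homomorphism cancels exactly one copy of $\Phi[n,(2^{d/2})]$. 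The paper does not prove this; it takes it from \cite[Section 5.4]{CHKV}. You try to prove it, and as written that step is not yet an argument.

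There are two problems with that step. First, the Schur step is mis-justified.
\begin{itemize}
\item The chain-map property of $\eta\circ\partial$ only shows that the induced label map $f\colon\mathsf{C}_{(2^{d/2})}\to\mathsf{C}_{(d/2)^\ast}\circ^{S_2}\mathsf{C}_{(d/2)^\ast}$ commutes with surjections, since the cobar differential never uses injections. You assert $f$ is an $\mathbf{FA}$-map without argument.
\item $\mathsf{C}_\lambda$ is not simple as an $\mathbf{FS}$-module: the arity truncations $\bigoplus_{k\le N}\mathsf{C}_\lambda(k)$ are $\mathbf{FS}$-submodules. So simplicity cannot be invoked directly.
\item This is repairable. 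For $\lambda\neq(1^m)$, the kernel of $\mathsf{C}_\lambda(n)\to\prod_{s\colon\bar n\twoheadrightarrow\bar m}\mathsf{C}_\lambda(m)$ is stable under all set maps, because a non-surjective map $\bar n\to\bar m$ kills $\mathsf{Inj}_m(n)$. It is therefore an $\mathbf{FA}$-submodule vanishing in arity $m$, hence zero.
\item Consequently every $\mathbf{FS}$-endomorphism of $\mathsf{C}_{(2^{d/2})}$ is the scalar it induces in arity $d$. If the $(2^{d/2})$-component of $f$ is nonzero there, the long exact sequence removes exactly one copy, whatever the other components do. In arity $d$ the source is $V_{(2^{d/2})}$, so a nonzero $f_d$ automatically lands in that component.
\end{itemize}

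Second, and more seriously, the nonvanishing of $f$ in arity $d$ is only asserted (``should be visibly nonzero''). This is exactly the mathematical content the paper imports from \cite{CHKV}. You need either to cite it, or to carry out the computation. That means restricting the $GL_2(\mathbb{Z})$-invariant classes in $H^1(F_2)^{\otimes d}$ along the assembly map $\Gamma_{1,d/2+1}\times\Gamma_{1,d/2+1}\to\Gamma_{2,d}$ and exhibiting a nonzero image. Note that this map is dual to a non-self gluing, not a self-gluing as you wrote.
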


\begin{proof}  Let us first consider the case that $d\equiv 2$ mod $4$.  From Lemma $\ref{mod2}$, the coefficient $t_{(a,b)}$ is the same as the number of copies of $\mathsf{C}_{(a,b)}$ appearing in the expression	
		$$
		\ds\bigoplus_{\substack{ r > s \geq 2\\ r,s \text{ both even }  \\ r+s =d}}	\mathsf{C}_{(r)}\circ \mathsf{C}_{(s)} =
		\ds\bigoplus_{\substack{ r > s \geq 2\\ r,s \text{ both even }  \\ r+s =d}} \bigoplus_{j=0}^s \mathsf{C}_{(r+j,s-j)}
		$$
		A given $\mathsf{C}_{(a,b)}$ will appear in this expression for each solution to $a=r+j$ and $b=s-j$ arising from $(r,s,j)$ as indexed.  This in turn requires $a+b=r+s=d$ and $r\leq a$.  Since $d$ is even, we must have $a\equiv b$ mod $2$ for such a solution to exist.  When $a\equiv b$ mod $2$, we can set $s=a+b-r$ to find one solution for each even $r$ such that 
		$(a+b)/2 < r\leq a$, except in the case that $b=0$, when we must require $(a+b)/2 < r < a$, since $s$ can't be $0$, 
		accounting for the $\delta_{b,0}$ in the formula.  Counting the number of even integers within this interval gives the result.

	In the case $d\equiv 0$ mod $4$, the analysis is similar, but by Lemma $\ref{0mod4}$,  we also have a contribution when $r=s$, which in turn requires $a$ and $b$ to be even. We appeal to
	 \cite[Section 5.4]{CHKV} to conclude that the connecting homomorphism of the short exact sequence in Equation $\ref{ses}$ is non-zero only if $a=b=r=s$, in which case it sends the cohomology of $\mathsf{T}^{2}_\Gamma(n)_{d} = \Omega(\mathsf{C}_{(d/2,d/2)^\ast})$ to the corresponding summand in $\Omega(\mathsf{C}_{(d/2)^\ast}\circ^{S_2} \mathsf{C}_{(d/2)^\ast})$.  Hence the connecting homomorphism cancels one copy of $\Phi[n,(r,r)^\ast]$, accounting for the $-\delta_{a,b}$ term.

\end{proof}

\begin{proposition} \label{yab} There is an isomorphism of graded $S_n$-modules:
$$H^\ast(\mathsf{T}_\Gamma(n)) \cong \bigoplus_{\substack{0\leq b \leq a \\ a+b\leq n }} y_{(a,b)}\Sigma^{a+b+1}\Phi[n,(a,b)^\ast]$$
where 
$$
y_{(a,b)} = \begin{cases}
	\left\lfloor\ds\frac{a-b+2}{3} \right\rfloor -\delta_{b,0}	&	a \equiv_2 b \\  & \\
	0	&	a \not\equiv_2 b
\end{cases}
$$

\end{proposition}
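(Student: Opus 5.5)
The plan is to assemble the two previous computations. The complex $\mathsf{T}_\Gamma(n)$ splits as a direct sum over the internal degree $d$ of the vertex labels, so
$$H^\ast(\mathsf{T}_\Gamma(n))=\bigoplus_{d>0}H^\ast(\mathsf{T}_\Gamma(n)_d).$$
For $d$ odd, Lemma \ref{uformula} gives the summand. For $d$ even, Corollary \ref{tformula2} gives it. (The case $d=0$ does not occur, since degree-zero labels were quotiented out in $\overline{\mathsf{FT}}$.)

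The first step is to reindex both formulas by the partition $(a,b)$ and compare the degree shifts. In Lemma \ref{uformula} the partitions satisfy $a+b=d-1$ and carry the shift $\Sigma^{d}=\Sigma^{a+b+1}$. In Corollary \ref{tformula2} they satisfy $a+b=d$ and carry the shift $\Sigma^{d+1}=\Sigma^{a+b+1}$. So a fixed $(a,b)$ receives exactly two contributions: one from the odd degree $d=a+b+1$ and one from the even degree $d=a+b$. Both appear in the same shift $\Sigma^{a+b+1}$, which is the shift in the statement. Both coefficients vanish unless $a\equiv b \bmod 2$, which gives the second case of $y_{(a,b)}$. The restriction $a+b\leq n$ holds because $\Phi[n,\lambda]=0$ when $|\lambda|>n$, as $\Omega(\mathsf{C}_\lambda)(n)$ is then zero.

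It therefore remains to verify the numerical identity $y_{(a,b)}=u_{(a,b)}+t_{(a,b)}$ when $a\equiv b$. Write $k=a-b$, which is even.

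If $a,b$ are both even, the sum is
$$\lfloor (k-2)/4\rfloor-\lfloor k/6\rfloor+\delta_{a,b}+\lfloor k/4\rfloor+1-\delta_{a,b}-\delta_{b,0}.$$
The two $\delta_{a,b}$ terms cancel.

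If $a,b$ are both odd, the sum is
$$\lfloor (k-2)/4\rfloor-\lfloor k/6\rfloor+1+\lfloor k/4\rfloor-\delta_{b,0}.$$

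Either way one must show
$$\lfloor (k-2)/4\rfloor+\lfloor k/4\rfloor+1-\lfloor k/6\rfloor=\lfloor (k+2)/3\rfloor .$$
Set $k=2m$. Then $\lfloor (m-1)/2\rfloor+\lfloor m/2\rfloor=m-1$, so the left side is $m-\lfloor m/3\rfloor$. Both this and $\lfloor (2m+2)/3\rfloor$ increase by $2$ when $m$ increases by $3$. They also agree for $m=0,1,2$, taking the values $0,1,2$. This proves the identity.

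The main obstacle is bookkeeping at the boundary rather than any new homological input. The floor formulas in Lemma \ref{uformula} and Corollary \ref{tformula2} were each derived under implicit range conditions ($d>0$, $s\geq 2$, $a-b-2\geq 0$, plus the $\delta$ corrections). I would double-check each small case directly against Proposition \ref{g2dec} and Lemmas \ref{mod2} and \ref{0mod4}: these are $a=b$, $b=0$, and especially the degenerate partition $a=b=0$. The latter only arises from $d=1$, where the true contribution is $u_{(0,0)}=0$ (and $\Phi[n,\emptyset]$ plays no role in the statement). That case should be treated separately or excluded by convention. Once these edge cases are confirmed, the proposition follows by summing over $d$.
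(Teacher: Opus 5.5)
Your proposal is correct and follows the paper's argument. The paper matches the shifts $\Sigma^{d}$ for odd $d=a+b+1$ (Lemma \ref{uformula}) and $\Sigma^{d+1}$ for even $d=a+b$ (Corollary \ref{tformula2}), asserts $y_{(a,b)}=u_{(a,b)}+t_{(a,b)}$, and leaves the rest as ``routine simplification''. Your reduction, with $k=a-b=2m$, to the identity $m-\lfloor m/3\rfloor=\lfloor (2m+2)/3\rfloor$ correctly supplies that omitted step.

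The boundary case you flag is a real discrepancy, not a cosmetic one. The stated formula gives $y_{(0,0)}=-1$, which comes from the formal value $t_{(0,0)}=-1$ attached to a nonexistent $d=0$ summand, whereas the true contribution is $u_{(0,0)}=0$. So the $(0,0)$ term must be excluded. This is harmless for $n\geq 2$, since there $\Phi[n,\emptyset]=H^\ast_c(F(\mathrm{pt},n))=0$.
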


\begin{proof}  Combining Lemma $\ref{uformula}$ and Corollary $\ref{tformula2}$ we see that  $y_{(a,b)}=u_{(a,b)}+t_{(a,b)}$.  Routine simplification then gives the stated formula. 
\end{proof}

We remark that this result shows the cohomology $H^\ast(\mathsf{T}_{\Gamma}(n))$ is concentrated in degrees $n+1$ and $n$.

\subsubsection{Character computation for $H^\ast_c(F(S^1\vee S^1,n))$}

To prove Statement (1) of the Theorem $\ref{ftthm}$, it remains to compare the decomposition above with the associated decomposition of the configuration space.  For this we let $X =S^1\vee S^1$ and  adapt the arguments in \cite[Theorem 6.1]{GH} to our setting.  We start with
\begin{equation}\label{deceq} 
	H^\ast_c(F(X,n))\cong \bigoplus_m \Phi[n,m]\tensor_{S_m}\tilde{H}^\ast(X)^{\tensor m},
\end{equation} viewed as an equality of $GL_2(\mathbb{Z})\times S_n$ representations 
with $GL_2(\mathbb{Z})$ action is induced from matrix multiplication (diagonally) on $\tilde{H}^\ast(X)^{\tensor m}$.  

We will consider only a restriction of the $GL_2(\mathbb{Z})$ action.
Define matrices $$X = \begin{bmatrix} 1 & 1 \\ -1 & 0 \end{bmatrix} \ \text{ and } \ 
Y = \begin{bmatrix} 0 & 1 \\ 1 & 0
 \end{bmatrix}.$$  
The subgroup $\langle X, Y\rangle \subset GL_2(\mathbb{Z})$ is $D_{12}$, the dihedral group of order 12.  This group is in turn isomorphic to $\mathbb{Z}_2\times S_3$ via $Y\mapsto (0,(12))$ and $X \mapsto (1,(123))$.  Consider the irreducible $D_{12}=\mathbb{Z}_2\times S_3$ representation given by $Id\boxtimes V_{2,1}$.

\begin{lemma}\label{qlambda} There is an isomorphism of graded $S_n$-representations $$H^\ast_c(F(S^1\vee S^1,n))\tensor_{\mathbb{Z}_2\times S_3}( Id\boxtimes V_{2,1})\cong\bigoplus_\lambda \Sigma^{|\lambda|} q_\lambda\Phi[n,\lambda^\ast]$$ where
$$
q_{\lambda} = \begin{cases}
	\left\lfloor \ds\frac{a-b+2}{3} \right\rfloor & \text{ if } \lambda = (a,b) \text{ and } a\equiv b \text{ mod } 2, \\ \\
	0 & \text{ else.}
\end{cases} 
$$
\end{lemma}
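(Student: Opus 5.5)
The plan is to start from the $GL_2(\mathbb{Z})\times S_n$-equivariant decomposition in Equation $(\ref{deceq})$, restrict it to $D_{12}=\mathbb{Z}_2\times S_3$, and apply $-\tensor_{D_{12}}(Id\boxtimes V_{2,1})$. This functor only touches the factor $\tilde{H}^\ast(X)^{\tensor m}$, so
$$H^\ast_c(F(X,n))\tensor_{D_{12}}(Id\boxtimes V_{2,1})\cong\bigoplus_m \Phi[n,m]\tensor_{S_m}\Big(\tilde{H}^\ast(X)^{\tensor m}\tensor_{D_{12}}(Id\boxtimes V_{2,1})\Big).$$
Write $\tilde{H}^\ast(X)=\Sigma W$ with $W=\mathbb{Q}^2$ the defining representation of $GL_2(\mathbb{Z})$.

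\textbf{Step 1: Schur--Weyl duality.} Apply Schur--Weyl duality for $GL(W)\times S_m$ to $W^{\tensor m}=\bigoplus_{\lambda\vdash m,\ \ell(\lambda)\le 2}\mathbb{S}_\lambda(W)\tensor V_\lambda$. Placing the factors in degree $1$ introduces the Koszul sign, which twists the $S_m$-action by $sgn_m$. Hence $(\Sigma W)^{\tensor m}\cong\Sigma^m\bigoplus_\lambda \mathbb{S}_\lambda(W)\tensor V_{\lambda^\ast}$.

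\textbf{Step 2: identify the coefficients.} Over $\mathbb{Q}$, $\Omega(\mathsf{C}(V))=\Omega(\mathsf{Inj}_m)\tensor_{S_m}V$, and taking $S_m$-coinvariants is exact. Therefore $\Phi[n,m]\tensor_{S_m}V_{\lambda^\ast}=\Phi[n,\lambda^\ast]$. This reduces the lemma to showing that, for $\lambda=(a,b)$,
$$q_\lambda=\dim\big(\mathbb{S}_{(a,b)}(W)\tensor_{D_{12}}(Id\boxtimes V_{2,1})\big),$$
which is the multiplicity of $Id\boxtimes V_{2,1}$ in $\mathbb{S}_{(a,b)}(W)$, since all representations involved are self-dual.

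\textbf{Step 3: rewrite the Schur functor.} Use $\mathbb{S}_{(a,b)}(W)\cong \det^{\tensor b}\tensor \mathrm{Sym}^{a-b}(W)$. Under the identification $D_{12}\cong\mathbb{Z}_2\times S_3$, compute the relevant characters:
\begin{itemize}
\item $\det(X)=1$ and $\det(Y)=-1$, so $\det$ corresponds to $Id\boxtimes sgn$.
\item $X^3=-Id$ generates the $\mathbb{Z}_2$ factor and acts on $\mathrm{Sym}^k W$ by $(-1)^k$. So the $\mathbb{Z}_2$ factor acts trivially exactly when $a\equiv b$ mod $2$; otherwise $q_\lambda=0$.
\item On the quotient $S_3$, $W$ is the reflection representation $V_{2,1}$, and $V_{2,1}\tensor sgn\cong V_{2,1}$. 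Hence the $\det^{\tensor b}$ factor is irrelevant.
\end{itemize}
What remains is the multiplicity of $V_{2,1}$ in $\mathrm{Sym}^k(V_{2,1})$ for even $k=a-b$.

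\textbf{Step 4: invariant theory for $S_3$.} By Chevalley's theorem, $\mathrm{Sym}(V_{2,1})$ is free over its invariant ring, which has generators in degrees $2$ and $3$, tensored with the coinvariant algebra, which is the regular representation. In the coinvariant algebra $V_{2,1}$ occurs in degrees $1$ and $2$. The generating function for the multiplicity is therefore
$$\frac{t+t^2}{(1-t^2)(1-t^3)}.$$
One then checks that its coefficient of $t^k$ for even $k$ is $\lfloor (k+2)/3\rfloor$. For example, the values at $k=0,2,4$ are $0,1,2$; in general this is a routine count of solutions to $k=1+2i+3j$ or $k=2+2i+3j$.

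\textbf{Main obstacle.} I expect the difficulty to lie in the sign bookkeeping, not in the counting. There are two places to get this right. First, the Koszul sign in Step 1 must genuinely produce the conjugate partition $\lambda^\ast$ rather than $\lambda$. Second, the degree shift must be $\Sigma^{|\lambda|}$, and it must be compatible with the grading convention for $\tilde{H}^\ast$ as an $\mathbf{FA}$-module used in Lemma $\ref{eqfa}$.
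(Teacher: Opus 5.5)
Your proposal is correct. The reduction in Steps 1--2 is the same one the paper uses: you restrict Equation~(\ref{deceq}), apply Schur--Weyl with the Koszul sign turning $V_\lambda$ into $V_{\lambda^\ast}$ and producing $\Sigma^{|\lambda|}$, and use exactness of $S_m$-coinvariants. The paper only cites Gadish--Hainaut for this reduction, whereas you spell it out.

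The genuine difference is in how you compute the multiplicity of $Id\boxtimes V_{2,1}$ in $\mathbb{S}_{(a,b)}(\mathbb{Q}^2)$. The paper evaluates the character inner product directly. It tabulates the six elements of $D_{12}$ on which $V_{2,1}$ has non-zero character and evaluates $\chi_{\mathbb{S}_{(a,b)}}$ there from eigenvalues via $\sum_{i=b}^a x^iy^{a+b-i}$. It then simplifies $\frac{1}{12}\bigl(4(a-b+1)-4((a-b+1)\bmod 3)\bigr)$ to $\lfloor (a-b+2)/3\rfloor$.

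You instead factor $\mathbb{S}_{(a,b)}=\det^{\otimes b}\otimes\mathrm{Sym}^{a-b}$. You get the parity condition from the central element $X^3=-I$, discard $\det^{\otimes b}$ using $V_{2,1}\otimes sgn\cong V_{2,1}$, and read the multiplicity off the $S_3$ fake-degree generating function. Your route explains structurally why only $a-b$ and the parity of $a+b$ matter. The paper's route is more mechanical but transfers verbatim to other target representations, which is how it is reused for $V_2\boxtimes V_{1,1}$ in proving statement (2) of Theorem~\ref{ftthm}.

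Two small remarks:
\begin{enumerate}
\item $W$ itself does not factor through $S_3$. Rather, $W\cong sgn\boxtimes V_{2,1}$, so $\mathrm{Sym}^kW$ factors through $S_3$ as $\mathrm{Sym}^kV_{2,1}$ exactly when $k$ is even; your Step 3 should be phrased this way.
\item Your ``routine count'' becomes immediate once you note $\frac{t+t^2}{(1-t^2)(1-t^3)}=\frac{t}{(1-t)(1-t^3)}$. Its coefficient of $t^k$ is $\lfloor (k-1)/3\rfloor+1=\lfloor (k+2)/3\rfloor$ for every $k\geq 0$.
\end{enumerate}
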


\begin{proof}

For this, we take the scalar product of $\mathbb{Z}_2\times S_3$-representations, viewed as a (virtual) $S_n$-representation
$$
\left\langle  Id\boxtimes V_{2,1}, Res^{GL_2(\mathbb{Z})}_{\mathbb{Z}_2\times S_3}(H_c^{\ast}(F(S^1\vee S^1), n)) \right\rangle = 
\sum_\lambda \langle  Id\boxtimes V_{2,1}, Res^{GL_2(\mathbb{Z})}_{\mathbb{Z}_2\times S_3}(\mathbb{S}_\lambda(\mathbb{Q}^2) \rangle \cdot \Phi[n,\lambda^\ast]
$$
where $\mathbb{S}_\lambda(\mathbb{Q}^2)$ denotes the Schur functor associated to the partition $\lambda$. 
As in \cite[Section 6.1]{GH}, it suffices to compute $$q_\lambda:=\left\langle  Id\boxtimes V_{2,1},Res^{GL_2(\mathbb{Z})}_{\mathbb{Z}_2\times S_3}(\mathbb{S}_{\lambda}(\mathbb{Q}^2))\right\rangle $$
Note $q_\lambda=0$ unless $\lambda=(a,b)$, which we assume from this point.  
The potential non-zero contributions come from the remaining conjugacy classes, whose data is summarized as follows:

\begin{center}
	\begin{tabular}{c|c|c|c|c|c}
		Conj. Class & size of & Matrix & Eigenvalues & $\chi( Id\boxtimes V_{2,1})$ & $\chi(Res (\mathbb{S}_\lambda(\mathbb{Q}^2)))$ \\ \hline 
		&&&&& \\
		$id \times id$ & 1 & $\begin{bmatrix} 1 & 0 \\ 0 & 1\end{bmatrix}$ & $1,1$ & 2 & $a-b+1 $   \\ 
		&&&&& \\
		$id \times (123)$ & 2 & $\begin{bmatrix}-1 & -1 \\ 1 & 0\end{bmatrix}$ & $\zeta_3,\zeta_3^2$& -1 & $(a-b+1)$ mod 3 	\\
	&&&&& \\	
		$(12) \times (123)$ & 2 & $\begin{bmatrix} 1 & 1 \\ -1 & 0 \end{bmatrix}$ & $\zeta_6,\zeta_6^{-1}$& -1 & $(-1)^{a+b}(a-b+1)$ mod 3 	\\
			&&&&& \\			
		$(12) \times id$ & 1 & $\begin{bmatrix}-1 & 0 \\ 0 & -1\end{bmatrix}$ & $-1,-1$& 2 & $(-1)^{a+b} (a-b+1)$	\\
	\end{tabular}	
\end{center}

The character of $\mathbb{S}_{\lambda}(\mathbb{Q}^2)$ is determined, as in \cite{GH}, via the eigenvalues; a matrix with eigenvalues $x$ and $y$ has character $\sum_{i=b}^a x^iy^{a+b-i}$.  The last column takes residues valued in $\{-1,0,1\}$.  We thus see that the scalar product will be $0$ unless $a \equiv b$ mod $2$, in which case it's  equal to $$\ds\frac{1}{12}\left(
4(a-b+1) -4(a-b+1 \text{ mod } 3)    \right). $$

Simplifying this expression we find   
$$
\frac{(a-b+1)}{3} -\frac{(a-b+1) \text{ mod } 3}{3} = \left\lfloor \ds\frac{a-b+2}{3} \right\rfloor$$
as desired.
\end{proof}




\subsubsection{Finishing the Proof}

Comparing $q_{(a,b)}$ in Lemma $\ref{qlambda}$ with $y_{(a,b)}$ in Proposition $\ref{yab}$, we see they coincide except when $a$ is even and $b=0$, in which case $y_{a,b} = \lfloor \frac{a-1}{3} \rfloor$ and $q_{(a,b)} = \lfloor \frac{a+2}{3} \rfloor$, i.e.\ $q_{(a)} = y_{(a)}+1$.  Summing over all such $a$ we find
$$
\Sigma^{-1}H^\ast(\mathsf{T}_\Gamma(n))\oplus (\oplus_i\Sigma^{2i}\Phi[n,(2i)^\ast])\cong H^\ast_c(F(S^1\vee S^1,n))\tensor_{\mathbb{Z}_2\times S_3} (Id\boxtimes V_{2,1})
$$
Recalling $\mathsf{W}_{\pm}:=\oplus_i\Sigma^{2i}\Phi[n,(2i)^\ast]$ (Equation $\ref{Wdef}$), and verifying $V_{120} = Id\boxtimes V_{2,1}$ concludes the proof of Statement (1) in Theorem $\ref{ftthm}$.

\section{$\mathbf{FA}$-decomposition of $B(2,n,r)$.}

In this section we derive a decomposition of a graph complex denoted $B(g,n,r)$ in terms of the cobar construction of $\mathbf{FA}$-modules.  This graph complex was introduced \cite{PW} in the case $r=11$ to model the weight 11 component of the compactly supported cohomology of the moduli space $\op{M}_{g,n}$.  In the case $g=1$, the cohomology of $B(1,n,r)$ was computed in \cite{FW} in terms of Whitehouse modules. 
The theorem we prove here can be thought of as a generalization of this result to genus 2.

 With enough set-up, the graph complex $B(g,n,r)$ can be efficiently described via the Feynman transform, see Remark $\ref{BFTrmk}$, but we find it more convenient for our purposes to simply give a hands-on description, which we do now.

\subsection{Recollection of $B(g,n,r)$}
Recall that basic notation and terminology related to graphs was given in sub-section $\ref{graphs}$.  We now specialize to what we call marked graphs.  By a marked graph $(\gamma, v, R)$ we refer to a connected, leg-labeled graph along with a choice of a distinguished vertex $v$ and a subset $R\subset a^{-1}(v)$.  We call the elements of $R$ marked flags, or markings, and denote them graphically by tic marks (see the top line of Figure $\ref{fig:fig1}$).  We impose the stability condition $a^{-1}(w) \geq 3$ for all vertices $w\in V(\gamma)\setminus v$, which we call neutral vertices, but impose no stability condition at $v$ itself.  We also impose the condition that marked graphs have no tadpoles adjacent to neutral vertices.  We say a marked graph is of type $(g,n,r)$ if $\beta(\gamma) = g-1$, $\gamma$ has $n$ legs and  $|R|\geq r$.  We remark that the $|R|\geq r$ condition means that one can't determine $r$ from looking at the graph alone, rather we first fix the parameters $(g,n,r)$ and then consider the associated marked graphs.

Given a marked graph we write $det(\gamma)$ for the top exterior power of the span of the set $E(\gamma)$.  We write $det^{-1}(R)$ for the top exterior of the set $R$, shifted down in degree by $2R$, so-as to be concentrated in degree $-|R|$.  We then define $det(\gamma, R):= det(\gamma)\tensor det^{-1}(R)$, a 1-dimensional vector space concentrated in degree $|E(\gamma)|-|R|$.  The graded vector space underlying the chain complex $B(g,n,r)$ is then defined to be $$B(g,n,r) = \ds\bigoplus_{[\gamma,v,R]} det(\gamma, R)_{Aut(\gamma,R)}.$$
Here morphisms, and hence automorphisms, of marked graphs are presumed to preserve the data $v$ and $R$.  

The differential on $B(g,n,r)$ can be described informally as the sum over all edge expansions and ways to erase a marking on a marked flag (i.e.\ remove an element of $R$ provided $|R|>r$).  The edge expansions at the distinguished vertex are non-zero only if at most one marked flag is being expanded away from the distinguished vertex.  If there is one such marked flag it marks the expanded edge.

We refer to \cite{PW} and \cite{FW} for a greater level of detail.  To make a direct comparison, note that the complex $\tilde{B}(g,n)$ of \cite{PW} coincides with our $B(g,n,11)$. 

\begin{remark} \label{BFTrmk} It is possible to describe the graph complex $B(g,n,r)$ directly in terms of the Feynman transform.  For this, one should first resolve the $\mathbf{FA}$-module $\widetilde{\mathsf{C}}_{1^r}$ via the truncation of the Koszul complex $\dots \to\mathsf{C}_{1^{r+1}} \to \mathsf{C}_{1^{r}} \to 0 $ and then form a modular cooperad from this data as in \cite{CLPW2}.    
\end{remark}

\subsection{Main Theorem}\label{secmain}

We now give the main theorem of this section.  We recall the notation $\lambda^\ast$ indicates the partition conjugate to $\lambda$.

\begin{theorem}\label{decompthm}  For all $n$ and $r\geq 3$, there is an $S_n$-equivariant quasi-isomorphism

		
		
					$$\Sigma^{r-2}B(2,n,r)  \sim
		\Omega(\mathsf{C}_{(r-2,1)^\ast})(n) \oplus \ds\bigoplus_{i=1}^{\lfloor(n-r+1)/2\rfloor}\Sigma^{2i}\Omega(\mathsf{C}_{(2i+1)^\ast}\circ\mathsf{C}_{(r-2)^{\ast}} )(n)  $$	
\end{theorem}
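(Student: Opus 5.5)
The plan is to follow the template of Lemma \ref{mod2}. We filter $B(2,n,r)$ by the shape of the unique cycle of the underlying graph ($\beta(\gamma)=1$). We then use contraction arguments (augmented chains on subdivided cubes) to collapse each shape to a tree complex with one distinguished vertex, and read off the decorations at that vertex as an $\mathbf{FA}$-module of the form $\mathsf{C}(V)$.

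\emph{Step 1: fix the number of markings.} First we replace $B(2,n,r)$ by the quotient (or subcomplex) in which exactly $r$ flags are marked. The marking-erasure part of the differential is the truncated Koszul resolution of $\widetilde{\mathsf{C}}_{1^r}$ (Remark \ref{BFTrmk}). Filtering by $|R|$ and using acyclicity of this resolution should reduce us to marked graphs with a decoration in $\widetilde{\mathsf{C}}_{1^r}$ at $v$, or equivalently in $\mathsf{C}_{1^{r-1}}$ modulo the image of $\mathsf{C}_{1^r}$. I would keep the version with an $r$- or $(r-1)$-element marking set, whichever makes later identifications cleanest.

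\emph{Step 2: split according to the position of the cycle.} Marked graphs in $B(2,n,r)$ fall into two types: those whose cycle passes through $v$ (including tadpoles at $v$), and those whose cycle consists only of neutral vertices, joined to $v$ by a path. Edge expansion can only move the cycle off $v$, never onto it, so the second type spans a subcomplex $B'$ and the first type is the quotient $B''$.
\begin{itemize}
\item For $B''$, filter so that the associated graded only expands edges lying on the cycle. Each summand is then the augmented chains of a subdivided cube, unless the cycle is a tadpole at $v$. The result is quasi-isomorphic to pointed trees whose distinguished vertex carries $r$ (or $r-1$) markings together with a tadpole, which uses two flags of $a^{-1}(v)$.
\item The interaction of the markings with the two tadpole flags, which are swapped by an automorphism and carry the sign from $det(\gamma)$, should produce the $S_{r-1}$-representation $V_{(r-2,1)^\ast}=V_{(2,1^{r-3})}$ inside $\mathrm{Ind}(sgn_{r-2}\boxtimes\cdots)$. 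This yields the summand $\Omega(\mathsf{C}_{(r-2,1)^\ast})$ with shift $\Sigma^{r-2}$.
\end{itemize}

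\emph{Step 3: the subcomplex $B'$.} As in Lemma \ref{mod2}, we contract the path from $v$ to the cycle to length one and kill longer paths by a cube argument. The cycle component is then a genus-one commutative graph complex with one leg pointing to $v$. By \cite{FW} (equivalently Example \ref{WHex}) its homology is the Whitehouse-type module $\mathsf{W}_-$, and we would use it in the form where the legs of the cycle carry the $\mathsf{C}_{(1^{2i+1})}$ decoration for odd arities $2i+1$. Attaching this to $v$ with its $r-2$ surviving markings gives $\Sigma^{2i}\Omega(\mathsf{C}_{(2i+1)^\ast}\circ\mathsf{C}_{(r-2)^\ast})$. The bound $i\le\lfloor (n-r+1)/2\rfloor$ comes from counting legs.

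\emph{Main obstacle.} The hardest step is the long exact sequence for $0\to B'\to B\to B''\to 0$. The connecting map must be shown to vanish, or to cancel exactly the pieces not appearing in the statement. I expect this is where the shift from $r$ to $r-2$ markings is explained: two markings are absorbed when a tadpole at $v$ expands into a cycle through a neighbouring vertex. I would first compute the connecting map explicitly on the tree models, as in the use of \cite[Section 5.4]{CHKV} in Corollary \ref{tformula2}. If it proves nonzero, I would instead build a direct zig-zag map from the right-hand side into $\Sigma^{r-2}B(2,n,r)$, placing marked cycle representatives at $v$, and check that it is a quasi-isomorphism via the same filtrations. Careful bookkeeping of signs between $det(\gamma)$ and $det^{-1}(R)$ is needed throughout to get conjugate partitions rather than the partitions themselves.
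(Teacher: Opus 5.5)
There is a genuine gap. Splitting by whether the cycle passes through $v$ does not match where the cohomology lives, and the identifications you propose for both pieces are false. Already $n=4$, $r=3$ is a counterexample.
\begin{itemize}
\item In your $B'$, every graph with a $2$-cycle vanishes, because the automorphism exchanging the parallel edges acts by $-1$ on $det(\gamma)$. So $B'$ is spanned by the six graphs in which a trivalent, fully marked $v$ with two legs is joined to a triangle of trivalent neutral vertices.
\item Hence $H^\ast(B')=B'$ is $6$-dimensional in degree $1$; it is $V_{(1,1)}\circ V_{(1,1)}$.
\item The summand you want $B'$ to produce is $\mathsf{C}_{(1^3)}\circ\mathsf{C}_{(1)}(4)=V_{(2,1,1)}\oplus V_{(1^4)}$. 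This is only $4$-dimensional, and it is all of $H^1(B(2,4,3))$, since $\Phi[4,(2)]$ sits in degree $1$.
\item So the connecting map $H^0(B'')\to H^1(B')$ has rank at least $2$. In particular $H^\ast(B'')$ is strictly larger than the tadpole contribution $\Omega(\mathsf{C}_{(r-2,1)^\ast})$ you assign to it.
\end{itemize}
Your tadpole analysis is also off. A tadpole at $v$ only ever yields $\mathsf{C}_{(1^{r-1})}$: one tadpole flag is marked and there are $r-1$ alternating markings elsewhere. It never yields $V_{(2,1^{r-3})}$.

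What you are missing is the mechanism that produces $\mathsf{C}_{(1^{2i+1})}$ and the shift $r\mapsto r-2$.
\begin{itemize}
\item The cohomology is carried by graphs in which $v$ lies on a cycle of even length $2i+2$, both cycle edges at $v$ are marked, and $|R|=r$. The ``$r-2$'' counts the markings off the cycle.
\item The arrangements of the $2i+1$ neutral cycle branches around a cycle pinned at $v$ form, dually, the permutohedral complex $\Pi(2i+1)$ modulo reversal. Its homology is the sign representation in top degree; for an even number of neutral cycle vertices (odd cycles) the complex is acyclic.
\item This is a permutohedron, not a cube. It is the source of the alternating ``blue'' markings.
\end{itemize}

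To isolate these graphs, the paper shows that the span of graphs with $v$ off the cycle, or $|R|>r$, or an unmarked cycle edge at $v$, is an acyclic subcomplex of $B(2,n,r)^\ast$. It is acyclic only as a whole. In the associated graded, a graph with $v$ off the cycle, joined by a single marked edge to a trivalent cycle vertex, sits in an acyclic summand together with graphs having $v$ on the cycle and only one cycle edge at $v$ marked. This is exactly the pairing your on/off-the-cycle split breaks.

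On the quotient, the paper contracts even cycles, summing over a choice of one input per neutral cycle vertex, onto the two-colour tree complex $\mathsf{TT}(n,r)$. There the recolouring map $\mathsf{T}(n,1,r-2)\to\mathsf{T}(n,0,r-1)$ between $2$-cycles and tadpoles leaves $\mathsf{C}_{(2,1^{r-3})}$ from $\mathsf{C}_{(1)}\circ\mathsf{C}_{(1^{r-2})}=\mathsf{C}_{(1^{r-1})}\oplus\mathsf{C}_{(2,1^{r-3})}$.

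Two smaller points on Step 1. $\widetilde{\mathsf{C}}_{1^r}$ is the cokernel of $\mathsf{C}_{1^{r+1}}\to\mathsf{C}_{1^r}$, not $\mathsf{C}_{1^{r-1}}$ modulo the image of $\mathsf{C}_{1^r}$. Also, restricting to exactly $r$ markings is not a quasi-isomorphism on its own: the complement is $B(2,n,r+1)$, which is not acyclic.
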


We remark that the shifts in the statement imply that possible degrees of the homology of each summand align.  In particular, an immediate corollary is:

\begin{corollary}
	$H^j(B(2,n,r)) = 0$ unless $j=n-2r+3$ or $j=n-2r+2$.
\end{corollary}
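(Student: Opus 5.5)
The corollary follows from Theorem~\ref{decompthm} by degree bookkeeping. The plan is to establish that every coefficient $\Phi[n,\lambda]$ with $|\lambda|=m$ is concentrated in cohomological degrees $n-m-1$ and $n-m$, and then to check that the shifts in Theorem~\ref{decompthm} put every summand into the same two degrees.

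\emph{Step 1: the degrees of $\Phi[n,\lambda]$.} First, $\Phi[n,m]=\bigoplus_{|\lambda|=m}\Phi[n,\lambda]\tensor V_\lambda$, because $\mathsf{Inj}_m\tensor_{S_m}-$ and $\Omega$ are additive. So it suffices to bound the degrees of $\Phi[n,m]$. Use Corollary~\ref{checkGH} with $g$ large, so that every $\lambda$ with $|\lambda|=m$ occurs in $\tilde H^\ast(R_g)^{\tensor m}$. The summand indexed by $m$ sits in total degree $j$ exactly when $\Phi[n,m]$ contributes in degree $j-m$, since $\tilde H^\ast(R_g)=H^1$. By \cite{GH}, $H_c^j(F(R_g,n))$ vanishes unless $j\in\{n-1,n\}$. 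Hence $\Phi[n,\lambda]$ lives in degrees $n-m-1$ and $n-m$.

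This matches Example~\ref{WHex} for $\lambda=(1^m)$. It also matches the remark after Proposition~\ref{yab}, where $\Sigma^{a+b+1}\Phi[n,(a,b)^\ast]$ lands in degrees $n,n+1$. If one prefers to avoid citing \cite{GH} for the vanishing, the alternative is to bound the degrees directly in the tree complex $\Omega(\mathsf{C}_\lambda)(n)$, counting edges against the valence $\geq m$ at the distinguished vertex. I expect this direct count to be the main technical point if done by hand, because the lower bound $n-m-1$ requires an acyclicity argument rather than a pure count. For that reason I would rely on the known vanishing for configuration spaces of graphs.

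\emph{Step 2: apply the decomposition.} By the paper's shift convention $(\Sigma^kV)_{k+i}=V_i$, we have $H^j(B(2,n,r))=H^{j+r-2}(\Sigma^{r-2}B(2,n,r))$. Theorem~\ref{decompthm} identifies this with the cohomology of the right-hand side, and each $\Omega(\mathsf{C}(V))$ splits as a direct sum of $\Phi[n,\lambda]$'s over the irreducible constituents of $V$.

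\emph{Step 3: degree count for each summand.}
\begin{itemize}
\item The first summand has $|(r-2,1)^\ast|=r-1$. By Step 1 it contributes in degrees $n-r$ and $n-r+1$.
\item The $i$-th summand involves partitions of size $(2i+1)+(r-2)=r+2i-1$. Before the shift it sits in degrees $n-r-2i$ and $n-r-2i+1$. The shift $\Sigma^{2i}$ moves it to degrees $n-r$ and $n-r+1$.
\end{itemize}
So the right-hand side has cohomology only in degrees $n-r$ and $n-r+1$. Setting $j+r-2\in\{n-r,n-r+1\}$ gives $j\in\{n-2r+2,\,n-2r+3\}$, which is the claim.
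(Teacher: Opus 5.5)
Your proposal is correct and is essentially the paper's argument, which is just the remark that the shifts in Theorem~\ref{decompthm} align degrees: each $\Phi[n,\lambda]$ lives in degrees $n-|\lambda|-1$ and $n-|\lambda|$, and $\Sigma^{2i}$ exactly compensates for $|\lambda|=r+2i-1$, so every summand lands in degrees $n-r$ and $n-r+1$. Your large-$g$ argument, which extracts this degree bound for every $\lambda$ from the concentration of $H_c^\ast(F(R_g,n))$ in degrees $n-1$ and $n$, makes explicit a fact the paper uses tacitly; note only that, like the paper, it applies in the range $r\ge 3$ (or $r=2$) where the theorem holds, since for example $B(2,1,1)$ has cohomology in degree $0$.
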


We have stated Theorem $\ref{decompthm}$ when $r \geq 3$.  We remark that the theorem also holds when $r=2$, provided that the term corresponding to $(2,1^{r-3}) = (r-2,1)^\ast$ in the statement is interpreted as $0$, and the term  $\mathsf{C}_{(2i+1)^\ast}\circ\mathsf{C}_{(r-2)^{\ast}}$ is interpreted as $\mathsf{C}_{(2i+1)^\ast}$.  When $r=0,1$ the cohomology of $B(2,n,r)$ is essentially trivial, 
see Corollary $\ref{lowr}$.

The proof of Theorem $\ref{decompthm}$ will occupy the rest of this subsection. 
We find it slightly more natural to give the proof of the linear dual statement, in which the cobar construction is replaced by the bar construction.  For this we define $L(n,r):=B(2,n,r)^\ast$.  The proof will proceed in four steps:
\begin{enumerate}
	\item  Identify a class of genus 2 graphs forming an acyclic subcomplex of $L(n,r)$ (we call these ``class-{\ttfamily b}'').   Reduce to the associated quotient complex of ``class-{\ttfamily a}'' graphs, which we denote by $\mathsf{L}(n,r)$.

\item Construct a complex of trees, the total tree complex $\mathsf{TT}(n,r)$, which is a slight variation and quasi-isomorphic to the (linear dual of the) right hand side of Theorem $\ref{decompthm}$.

\item Define a chain map $\eta\colon\mathsf{L}(n,r) \to \mathsf{TT}(n,r)$, roughly by contracting even length cycles.

\item Verify that $\eta$ is a quasi-isomorphism by filtering the mapping cone.  The associated graded splits into summands which can be compared to chains on permutohedra.
\end{enumerate}

\subsubsection{Reduction to ``type-a'' graphs} 

As above $L(n,r):=B(2,n,r)^\ast$.  After identifying the invariants of $Aut(\gamma)$ with the coinvariants (via the isomorphism $x\mapsto [x]$), we may consider $L(n,r)$ as a span of isomorphism classes of marked graphs of type $(2,n,r)$, modulo the action of automorphisms, with differential given as a sum of (non-tadpole) edge contractions and ways to mark an unmarked flag at the distinguished vertex (DV for short).  When an edge with a marked flag (also called a ``marked edge'') is contracted, the marking is distributed over the flags newly adjacent to the new DV. 
 Our degree conventions for $L(n,r)$ are homological with edges still having degree $1$ and markings degree $-1$.

Before proceeding, let us fix some terminology related to the graphs appearing in the argument.  First, note that a marked graph $\gamma$ of type $(2,n,r)$ necessarily has $\beta(\gamma)=1$, hence a unique cycle.  This in turn partitions the sets of vertices and edges into those which lie ``off'' and ``on'' the cycle respectively.  

Second, if we were to erase the edges on the cycle we'd be left with a disconnected graph whose components are rooted trees, with root vertices given by the cycle vertices.   We call these components the ``branches'' of the graph. We call a branch ``distinguished'' if the DV belongs to it, else it is ``neutral''. We use the terminology ``input'' and ``output'' for flags on a branch, viewing the branch as oriented toward its root/cycle vertex.  In particular, every flag not on the cycle is classified as either an input or an output.  Flags on the cycle are classified as neither.  See Figure $\ref{fig:f2}$.

\begin{definition}  A marked graph of type $(2,n,r)$ is defined to be ``class-{\ttfamily a}'' if all of the following conditions are satisfied:
	
	\begin{enumerate}
		\item[({\ttfamily a1})] the distinguished vertex (DV) lies on the cycle,
		\item[({\ttfamily a2})] the number of marked flags $|R_\gamma|$ is exactly $r$,
		\item[({\ttfamily a3})] both (or in the case of a tadpole, the) adjacent cycle edges are marked.
	\end{enumerate}
Such a marked graph is defined to be of ``class-{\ttfamily b}'' if it's not of class-{\ttfamily a}, i.e.\ if one or more of the above conditions is not satisfied.
\end{definition}

We invoke the above definition for all $r\geq 0$.  When $r=0$, the conditions {\ttfamily (a1),(a2)} and {\ttfamily (a3)} can not be simultaneously satisfied, so there are no class-{\ttfamily a} graphs in that case.  When $r=1$, the conditions can be satisfied only if the cycle has one edge (so is a tadpole).

\begin{lemma}\label{bgraphs}  The span of class-{\ttfamily b} graphs is an acyclic subcomplex of $L(n,r)$.
\end{lemma}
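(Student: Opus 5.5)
The plan is to check closure under the differential first, and then show acyclicity with a filtration whose associated graded splits into augmented chain complexes of simplices (Boolean cubes). This is the same mechanism used in the proofs of Lemma \ref{mod2} and Lemma \ref{CEthm}.

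\textbf{Step 1: closure.} Recall the differential on $L(n,r)$ is a sum of two kinds of terms: contracting non-tadpole edges, and marking an unmarked flag at the DV. A marking term strictly increases $|R|$, so its output violates ({\ttfamily a2}) and is class-{\ttfamily b}. For contraction terms I would go through the three failure modes one at a time.
\begin{itemize}
\item If $|R|>r$, contractions never remove markings, so ({\ttfamily a2}) still fails.
\item If ({\ttfamily a3}) fails with the DV on the cycle, contractions keep the DV on the cycle. An unmarked cycle flag at the DV either stays unmarked, or is the flag of the contracted edge; in that case only the markings of the contracted flag get redistributed, so an unmarked flag still yields no marking.
\item If ({\ttfamily a1}) fails, contracting the edge joining the DV's branch to the cycle moves the DV onto the cycle. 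The marking on the contracted flag (if any) is distributed to one of the newly adjacent flags. Those include the two cycle flags, and at most one marking is available, so at most one of the two cycle flags can receive it.
\end{itemize}
The third case is the delicate one. The tadpole case must be checked separately: there a single marking could land on the tadpole flag and produce a class-{\ttfamily a} graph. If that happens I would need either to show the terms cancel, or to enlarge the definition slightly (the paper's remark about $r=1$ suggests tadpoles are handled specially).

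\textbf{Step 2: filtration.} Filter the span of class-{\ttfamily b} graphs by a statistic that only sees the cycle and the distance from the DV to it. Concretely, I would use the length of the cycle together with the number of edges off the path from the DV to the cycle. The associated graded should then retain only two kinds of operations:
\begin{itemize}
\item contractions of edges on the geodesic from the DV to the cycle, together with markings of the flag heading toward the cycle;
\item markings or unmarkings of the DV's flags that are relevant to ({\ttfamily a2}) and ({\ttfamily a3}).
\end{itemize}

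\textbf{Step 3: acyclicity of each summand.} Each associated graded summand is indexed by a fixed ``core'' graph: the cycle, the neutral branches, and the unmarked-versus-excess-marked data. I would identify each summand with the augmented chains on a simplex or cube. The coordinates are the following independent binary choices:
\begin{itemize}
\item whether each edge on the DV-to-cycle path is present or contracted;
\item whether each optional flag at the DV carries a marking beyond the $r$ forced ones (equivalently a Koszul complex on the free extra marks);
\item whether each cycle-adjacent flag is marked.
\end{itemize}
In each class-{\ttfamily b} summand at least one of these coordinates genuinely varies. Precisely because the graph is not class-{\ttfamily a}, its class-{\ttfamily a} completion is excluded, so the summand is a full augmented cube with no missing vertex, which has no homology. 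A convergent spectral sequence argument then finishes the proof, since the filtration is bounded for fixed $n$.

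\textbf{Main obstacle.} I expect the hard part to be choosing the filtration so that the associated graded really decouples into cubes. The interaction between distributing a marking upon contraction and the marking operation is the subtle point, and one must check the orientation signs coming from $det(\gamma,R)$ and from automorphisms. The automorphism issue arises in particular for short cycles of length $1$ or $2$, where $Aut(\gamma,R)$ may act by sign and kill summands. I would first attempt this with a discrete-Morse-style pairing: match each class-{\ttfamily b} graph with the graph obtained by toggling the marking of a canonically chosen flag, namely the first unmarked cycle flag at the DV, or else the first extra marking. I would then verify that this matching is acyclic and compatible with the differential.
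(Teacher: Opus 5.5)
Your overall strategy matches the paper's: first closure, then a filtration whose associated graded splits into acyclic pieces. However, both steps have concrete problems, and the second is a genuine gap.

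\textbf{Step 1.} Your second bullet fails when the cycle has length $2$. Suppose exactly one of the two parallel edges is marked and $|R|=r$. Contracting the \emph{unmarked} edge turns the marked one into a marked tadpole at the DV, and that graph is class-{\ttfamily a}. Closure holds only because this term cancels against the term obtained by contracting the marked edge and distributing its marking to the other parallel edge. A $2$-cycle with both edges unmarked vanishes anyway, by the edge-swapping automorphism. Conversely, the tadpole worry in your third bullet cannot arise. If the DV is off the cycle, every cycle vertex is neutral, and neutral tadpoles are excluded. So $c\geq 2$, and the single available marking can reach at most one of two distinct cycle edges.

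\textbf{Steps 2--3: the real gap.} Toggling markings cannot cancel everything. Fix an underlying graph with the DV off the cycle. The span of marking sets $R$ at the DV with $|R|\geq r$, under the marking differential, is a truncated Koszul complex (cf.\ Remark~\ref{BFTrmk}). It is in general not acyclic: its homology sits at $|R|=r$. There is also no canonical set of ``$r$ forced markings'' relative to which the remaining markings form a full cube. So graphs with exactly $r$ markings must be cancelled using an edge contraction.

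For instance, take the DV on a cycle of length $c\geq 3$ with $r-1$ marked inputs. The two graphs with one cycle edge marked, together with the one with both marked, form a complex of ranks $2,1$, which is not acyclic. The missing vertex is the graph whose DV lies off the cycle, joined by a single edge to a trivalent cycle vertex, with marked output. Contracting that edge distributes the marking onto either cycle edge, completing the complex.

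Your filtration (cycle length plus off-path edges) retains all path contractions and all marking terms, and its graded pieces are not cubes. Contracting the DV's output edge spreads its marking over all newly adjacent flags and changes which flags are available for marking. Moreover, a given graph arises by contraction from many different expansions.

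The paper's filtration $q=e+c+\nu-2I$ is built to fix exactly this:
\begin{itemize}
\item the term $-2I$ drops all markings of input flags from the associated graded;
\item the term $\nu$ retains a single contraction: the distinguished-branch edge at the cycle, when the DV is off the cycle with marked output, $|R|=r$, and a root vertex with a unique input.
\end{itemize}
The graded pieces are then:
\begin{itemize}
\item pairs, either marking the DV's output or contracting that root edge;
\item the $1$--$2$--$1$ interval complex just described (smaller when $c=2$);
\item the four cycle-edge marking configurations when the DV is on the cycle with at least $r$ marked inputs (two when $c\leq 2$).
\end{itemize}
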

\begin{proof} 
	First, we confirm class-{\ttfamily b} graphs indeed form a subcomplex.  A graph is class-{\ttfamily b} if one or more of the following hold:

	\begin{enumerate}
		\item[(\ttfamily{b1})] the DV is not on the cycle
		\item[(\ttfamily{b2})] the number of markings is more than $r$
		\item[(\ttfamily{b3})] the DV is on the cycle, but an adjacent cycle edge is not marked.
	\end{enumerate}

Differential terms which add a marking will always satisfy ({\ttfamily b2}).  Differential terms which contract an edge preserve the number of markings, hence preserve property ({\ttfamily b2}).  It thus remains to examine the impact of contracting an edge of a graph satisfying properties ({\ttfamily b1}) or ({\ttfamily b3}).  If a marked graph satisfies condition ({\ttfamily b1}), contracting an edge not connecting the cycle to the DV will satisfy ({\ttfamily b1}), and contracting an edge which connects the cycle to the DV (if such an edge exists) will satisfy ({\ttfamily b3}).   In this last case, note that the cycle must have length $\geq 2$ (since there are no neutral tadpoles) hence the differential would be distributing one marking over two distinct edges.  

We thus reduce to considering the case of edge contraction for a graph satisfying property  ({\ttfamily b3}).  Clearly, contraction of an edge not on the cycle preserves property ({\ttfamily b3}).  Also, if the cycle has length $\neq 2$, contracting a cycle edge preserves property ({\ttfamily b3}) -- note here contracting a tadpole is 0 by definition.  In the case where the cycle has length 2, if both edges are unmarked the graph vanishes in $L(n,r)$ due to the automorphism exchanging the edges.  The last remaining case is a cycle of length $2$ with one marked edge.  In this case, contracting the unmarked edge leaves a marked tadpole, while contracting the marked edge has one term corresponding to the same marked tadpole, distributing the marking to the formerly unmarked edge.  We thus see a type-{\ttfamily a} graph (the marked tadpole) but it appears twice with opposite sign.  Hence type-{\ttfamily b} graphs form a subcomplex.

We then proceed to prove this subcomplex is acyclic.  For this we will introduce a filtration defined in terms of the following statistics associated to a marked graph $\gamma$ of type $(2,n,r)$:
\begin{itemize}
	\item $c=c(\gamma)$ is the length (i.e.\ the number of edges)  of the unique cycle of $\gamma$,
	\item $e=e(\gamma)$ is the number of edges of $\gamma$,
	\item $I= I(\gamma)$ is the number of marked flags of $\gamma$ which are inputs,
	\item $\nu=\nu(\gamma)$ is defined to be $0$ unless the DV is off the cycle, the output of the DV is marked, the number of markings is exactly $r$, and the root vertex of the distinguished branch has a unique input flag. If all such conditions are satisfied, define $\nu(\gamma)=-1$.   
\end{itemize} 	
See Figure $\ref{fig:f2}$.

\begin{figure}
	\centering
	\includegraphics[width=0.8\linewidth]{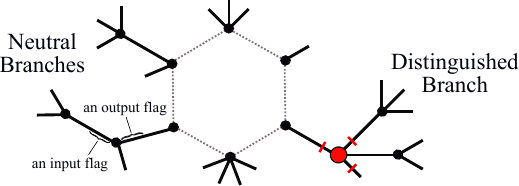}
	\caption{A class-{\ttfamily b} graph in $B(2,21,3)$ with $|R|=3$, $c=6$, $I=2$ and $\nu=-1$.  The distinguished vertex (DV) in red has marked output.  The cycle edges are drawn as dotted lines.  The leg labeling is suppressed.}
	\label{fig:f2}
\end{figure}

We then define a filtration $q$ on the subcomplex of class-{\ttfamily b} graphs by $$q(\gamma) = e+c+\nu-2I.$$  
To see that $q$ is indeed a filtration, note that $q$ can not increase when considering the terms in the differential.  More precisely, let us consider which terms in the differential fix versus decrease the filtration degree.

First, regarding edge contraction: $q$ decreases when contracting a cycle edge or an edge on a neutral branch; $q$ also decreases when contracting an edge on the distinguished branch unless $\nu$ increases.  There are two ways $\nu$ can increase: either we contract the (necessarily unique) edge of the distinguished branch which is adjacent to the root, or we contract the output edge of the DV and distribute the output marking to be an input.  In this latter case $I$ also increases, hence $q$ decreases.  So the only edge contraction which doesn't lower the filtration degree is contraction of the unique edge adjacent to the cycle and on the distinguished branch of $\gamma$, provided $\nu(\gamma)=-1$.

Second, regarding marking flags at the DV, the filtration degree $q$ decreases when marking an input flag and is fixed by marking non-input flags.  If the DV is off the cycle there is a unique such flag (the output).  If the DV is on the cycle there are (at most) two such flags (corresponding to the adjacent cycle edges(s)).

We aim to show the associated graded of this filtration has no homology.  By the above analysis, the differential terms in the associated graded are quite sparse, 
and we use this to argue that the associated graded splits into acyclic summands as follows:
\begin{itemize}
	\item If the DV is off the cycle and its output is unmarked, we mark it, resulting in marked output and more than $r$ markings.  Such a pair receives or supports no other differential terms in the associated graded, so supports an acyclic summand. 
	\item If $\nu(\gamma)=-1$ and the path connecting the DV to the cycle has more than one edge, 
	we contract the unique branch edge adjacent to the root.  This results in a graph $\gamma^\prime$ for which $\nu(\gamma^\prime)=0$, with a DV which is off the cycle, having marked output and $r$ markings. 
	This pair also supports an acyclic summand in the associated graded.
	\item Suppose $\nu(\gamma)=-1$ and the path connecting the DV to the cycle has only one edge, which in turn requires $c(\gamma)\geq 3$.  Contracting said edge gives a sum of two terms in the associated graded, distributing the output marking to either cycle edge.  Both differential terms have exactly $r-1$ marked input flags.  Each of these terms is itself the source of a differential term in the associated graded which marks the opposite, unmarked cycle edge.  The result is an acyclic summand, coinciding with the augmented simplicial chain complex of an interval. When $c=2$, automorphisms reduce the analogous summand to two terms (one or both edges marked, $r-1$ input markings) but the summand is still acyclic.
	
	\item Finally, if the DV is on the cycle and the number of markings off the cycle is at least $r$, then such a graph  belongs to a summand of the associated graded consisting of all four configurations of the cycle edges marked and unmarked, (unless $c\leq 2$ in which case there are only two configurations).  In either case the resulting summand is the augmented simplicial chain complex of an interval (or when $c\leq 2$, a point) and hence is acyclic.
	\end{itemize}
We emphasize that each class-{\ttfamily b} graph belongs to exactly one of the above summands, so this is indeed a splitting of this subcomplex into acyclic summands.  Since the associated graded has no homology, neither does the subcomplex of class-{\ttfamily b} graphs.
\end{proof}

We thus look to compute the homology of the quotient complex of class-{\ttfamily a} graphs.  We give this complex a name:
\begin{definition} Define $\mathsf{L}(n,r)$ to be the quotient complex of $L(n,r)$  by the subcomplex of class-{\ttfamily b} graphs.
\end{definition}

Before moving to the general case, let us use Lemma $\ref{bgraphs}$ to handle the following slightly simpler, borderline cases.
\begin{corollary}\label{lowr}
	The complex $B(2,n,r)$ has no cohomology when $r=0,1$ except $B(2,1,1)$ and $B(2,0,1)$, each of which have cohomology of rank 1 in degree $0$.

\end{corollary}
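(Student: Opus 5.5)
By Lemma \ref{bgraphs}, the dual complex $L(n,r)=B(2,n,r)^\ast$ is quasi-isomorphic to the quotient $\mathsf{L}(n,r)$ of class-{\ttfamily a} graphs. So it suffices to compute $\mathsf{L}(n,r)$ for $r=0,1$ and then dualize.

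For $r=0$ there are no class-{\ttfamily a} graphs, as noted after their definition: conditions ({\ttfamily a2}) and ({\ttfamily a3}) together force at least one marking. Hence $\mathsf{L}(n,0)=0$, and $B(2,n,0)$ is acyclic for every $n$.

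For $r=1$, conditions ({\ttfamily a1})--({\ttfamily a3}) force the following shape.
\begin{itemize}
\item The cycle is a tadpole at the DV carrying the unique marking on one of its two flags.
\item The two choices of flag give isomorphic marked graphs, so no orientation-reversing automorphism kills the generator.
\item Every other flag at the DV is unmarked, and the rest of the graph is a forest of stable trees hanging off the DV.
\end{itemize}
Forgetting the marked tadpole identifies such graphs with stable pointed $n$-trees $(\mathsf{t},v)$. The degree of a generator of $\mathsf{L}(n,1)$ equals the degree of the corresponding tree, $|E(\mathsf{t})|$, since the tadpole's edge and its marking cancel.

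Next I check the differential on $\mathsf{L}(n,1)$. Adding a marking produces $|R|>1$, which is class-{\ttfamily b}, so it vanishes in the quotient. Contracting the tadpole is zero by definition. Contracting an off-cycle edge carries no marking and is just ordinary edge contraction of the pointed tree. Thus $\mathsf{L}(n,1)$ is isomorphic, up to this degree bookkeeping, to $\mathsf{B}(\mathbb{Q}^\ast)(n)$. Here $\mathbb{Q}=\mathsf{Inj}_0$ is the constant $\mathbf{FS}$-module, i.e.\ $A^{\tensor}$ for the unital algebra $A=\mathbb{Q}$. The step needing the most care is the sign check that the tadpole's $det$ and $det^{-1}(R)$ factors contribute a uniform sign compatible with the edge-contraction signs. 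I expect this to be harmless, since those factors are untouched by every surviving differential term.

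Dually, $B(2,n,1)$ is then quasi-isomorphic, up to a shift, to $\Omega(\mathbb{Q}^{\tensor})(n)$. I compute this with Corollary \ref{DPcor}, taking $X$ to be a point, whose compactly supported cochains are modeled by the cdga $\mathbb{Q}$. This gives $H^\ast(\Omega(\mathbb{Q}^{\tensor})(n))\cong H_c^\ast(F(\mathrm{pt},n))$, which is $\mathbb{Q}$ in degree $0$ for $n=0,1$ and zero for $n\geq 2$, since $F(\mathrm{pt},n)=\emptyset$ for $n\ge 2$. As a sanity check independent of Petersen's theorem, one can instead use Lemma \ref{CEthm}: $\mathsf{CE}_\ast(\mathbb{Q}\tensor\mathsf{sLie})$ is the Chevalley--Eilenberg complex of the free Lie algebra on one odd generator, which has the same homology.

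Finally I track degrees in the surviving cases. For $n=0$ the only generator is the DV with the marked tadpole: one edge and one marking, so degree $0$. For $n=1$ it is the DV with one leg and the marked tadpole, again of degree $0$. This gives rank one in degree $0$ exactly for $B(2,0,1)$ and $B(2,1,1)$, and acyclicity in all other cases with $r=0,1$.
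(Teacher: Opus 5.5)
Your proof is correct. It shares the paper's first two steps: the reduction to class-{\ttfamily a} graphs via Lemma~\ref{bgraphs}, the empty case $r=0$, and the observation that for $r=1$ the class-{\ttfamily a} graphs are pointed trees carrying a singly marked tadpole at the DV. It departs from the paper only in how you show acyclicity when $r=1$ and $n\ge 2$.

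The paper argues directly. It expands or contracts the edge that separates the marked tadpole from the DV's remaining flags. This pairs off all generators, giving an explicit contracting homotopy. The cases $n=0,1$ are then read off, since only one graph survives.

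You instead recognize $\mathsf{L}(n,1)$ as $\mathsf{B}(\mathsf{Inj}_0^\ast)(n)$, so that $H^\ast(B(2,n,1))\cong\Phi[n,0]$. You then evaluate this through Corollary~\ref{DPcor} (equivalently Corollary~\ref{checkGH} with $g=0$) as $H_c^\ast(F(\mathrm{pt},n))$.

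Your route is more conceptual. It exhibits $r=1$ as the degenerate instance of the pattern in Theorem~\ref{decompthm}. It also explains the two exceptions $n=0,1$ as exactly the nonempty configuration spaces of a point. The cost is that it spends Petersen's theorem (or Koszulity) on a fact the paper's one-edge pairing settles in a few lines. That pairing is just the standard proof that the complex of stable pointed $n$-trees is acyclic for $n\ge 2$.

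Two small remarks. First, no degree shift is actually needed when dualizing, as your own degree count shows. Second, your CE sanity check should be read in symmetric sequences. There $\mathbb{Q}\tensor\mathsf{sLie}$ is the free (shifted) Lie algebra generated by the unit in arity $1$, and its CE homology is concentrated in arities $0$ and $1$. It is not literally the free Lie algebra on a single odd generator.
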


\begin{proof}
		When $r=0$ there are no class-{\ttfamily a} graphs and so $\mathsf{L}(n,0)$ (hence $B(2,n,0)$) is acyclic.
	
	When $r=1$ and $n\geq 2$ the complex $\mathsf{L}(n,1)$ always has a marked tadpole (which can't be double marked).  We can always expand/contract an edge separating this marked tadpole from adjacent input flags to show $\mathsf{L}(n,1)$ (hence $B(2,n,1)$) is acyclic.
	
	When $r=1$ and $n=0$ or $1$, $\mathsf{L}(n,1)$ consists of a marked tadpole and no other graph, hence the claim.
\end{proof}

\subsubsection{Forming the total complex of trees.}  

We now impose the condition $r \geq 2$, the cases $r=0,1$ having been treated above.

Recall (Definition $\ref{2colordef}$) that $\mathsf{T}(n,b,r)$ denotes the chain complex
	$$\mathsf{T}(n,b,r) := 
\mathsf{B}(\mathsf{C}_{(b)^\ast}\circ\mathsf{C}_{(r)^\ast})(n).$$
This chain complex may be described combinatorially as pointed $n$-trees along with a choice of $b+r$ markings which are partitioned into an ordered pair of subsets of size $b$ and $r$.  We call the set of size $b$ the ``blue markings'' and the set of size $r$ the ``red markings''.  The degree in this chain complex is given by the number of edges and ranges from $0$ to $n-(b+r)$.  Note that when $b=r$ we still retain the information of the order of the subsets; so for example $\mathsf{T}(2,1,1)$ has dimension 2.

\begin{definition}\label{ttdef} Given $n$ and $r$ we define the chain complex $(\mathsf{TT}(n,r), d_{\mathsf{TT}})$ as follows.  The underlying graded vector space is

$$ \mathsf{TT}(n,r) = \left(\ds\bigoplus_{i=0}^{\lfloor (n-r+1)/2\rfloor} \Sigma^{2i-r+2}\mathsf{T}(n,2i+1,r-2)\right)\oplus \Sigma^{1-r}\mathsf{T}(n,0,r-1).$$	
The differential $d_{\mathsf{TT}}$ is of the form $d_{\mathsf{TT}}= d_{\mathsf{T}} + (-1)^{E} d_R$ 
where $d_\mathsf{T}$ is the differential on each summand, and where $d_R\colon \mathsf{T}(n,1,r-2) \to \mathsf{T}(n,0,r-1)$ is defined to take the unique blue marking and turn it red. 
The fact that $d_{\mathsf{TT}}^2 = 0$ follows immediately from the fact that edge contraction and changing the color of a marking commute; the sign ensures they anti-commute. 
\end{definition}
We call $\mathsf{TT}(n,r)$ the ``total tree'' complex.  To prove Theorem $\ref{decompthm}$, it's now sufficient to construct a quasi-isomorphism, call it $\eta$,
$$
\mathsf{L}(n,r) \stackrel{\eta}\to \mathsf{TT}(n,r).
$$
Indeed the terms with $i\geq 1$ in Definition $\ref{ttdef}$ match those in the direct sum in Theorem $\ref{decompthm}$.  The remaining $\Omega(\mathsf{C}_{(r-2,1)^\ast})$ term in the theorem arises from the homology of $d_R\colon \mathsf{T}(n,1,r-2) \to \mathsf{T}(n,0,r-1)$.  This includes the case of $r=2$, where $d_R$ is an isomorphism. 

\subsubsection{Defining the map by even cycle contraction}
We now define this putative quasi-isomorphism.  Informally, $\eta$ is defined by ``contracting'' even length cycles and tadpoles, while leaving behind a new color (blue say) of marking, one for each neutral vertex that was contracted.

To give the formal definition we define $\eta$ on the summand $det(\gamma,R)_{Aut(\gamma,R)}$ associated to a class-{\ttfamily a} marked graph $\gamma=(\gamma,v,R)$ of type $(2,n,r)$, and then extend linearly. 
Write $\eta_\gamma$ for the restriction to the $\gamma$ summand.  Recall $c=c(\gamma)$ is the length of the unique cycle in $\gamma$.  If $c$ is odd and not equal to 1, we define $\eta_\gamma = 0$.  If $c=1$, i.e. $\gamma$ has a marked tadpole, then $\eta_\gamma$ is defined to remove the tadpole and the marking to form a tree supporting a summand in $\mathsf{T}(n,0,r-1)$, and hence in $\mathsf{TT}(n,r)$.  Since the edges and markings of this tree are in canonical bijective correspondence with the edges and markings of $\gamma\setminus \{\text{tadpole}\}$, these summands are isomorphic.  We choose the convention that $\gamma_{\eta}$ is the isomorphism which removes the tadpole edge in the last position and removes its marking in the first position of the wedge product $det(\gamma,R)$.

Finally, suppose $c=2i+2$. Enumerate the neutral vertices $v_1\cdc v_{2i+1}$ in either order around the cycle, starting from the distinguished vertex (call it $v_0$, say).  Define $G_i$ to be the input flags adjacent to $v_i$.  Define $G= G_1\times ... \times G_{2i+1}$. We will define $\gamma_\eta$ to be a sum indexed over the elements of $G$, which will correspond to the new (blue) markings. 

To make this precise, define a linear map 
$$\ell \colon \text{span}\{G\}\tensor det(\gamma,R) \to \mathsf{T}(n,c-1,r-2)\subset \mathsf{TT}(n,r)$$ 
as follows.  
 Given $\vec{g}=(g_1\cdc g_{2i+1})\in G$, we form a pointed $n$-tree, call it $\mathsf{t}_\gamma$ by contracting the cycle, forming a new (distinguished) vertex.  Specify two subsets of the flags at the distinguished vertex as follows.  The first subset (the blue markings) are the elements $\vec{g}$ (which clearly correspond to flags of $\mathsf{t}_{\gamma,\vec{g}}$).  The second subset (the red markings) are the flags of $\gamma$ that were marked but not on the cycle (see the left hand side of Figure $\ref{fig:fig1}$).  This tells us the summand of $\mathsf{T}(n,2i+1,r-2)$ on which $\ell(\vec{g},-)$ is supported.  It thus suffices to fix the signs which define the specific map
$$det(\gamma,R) \to det(\mathsf{t}_{\gamma,\vec{g}}).$$

For this, choose a representative of a basis element of the source for which the edges of the cycle are in last position.  Place the marked edges in the last two positions (in either order) and their corresponding markings in the first two marking positions (mimic the chosen edge order).  For the remaining edges we traverse the cycle in the same order as that of the vertices chosen above.  The order of the other (red) markings is immaterial.  Define the map to take this basis element to that in the target given by keeping the remaining red markings and non-cycle edges in the given order, and taking the blue markings in the chosen order $g_1\wedge ... \wedge g_{2i+1}$.  Observe this definition is independent of choice -- if we had chosen the other orientation of the cycle both the source and target would differ by $(-1)^i$, hence the map would be the same.

Finally, define $\eta_\gamma = \sum_{\vec{g}\in G} \ell(\vec{g},-)$;  in words, we sum over ways to assign one blue marking to an input flag at each neutral vertex, and then contract the cycle to form a tree with a distinguished vertex having two distinguished subsets of markings.

\begin{lemma}  $d_{\mathsf{TT}}\eta=\eta d_{\mathsf{L}}$.
\end{lemma}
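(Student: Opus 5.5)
We check the identity one basis element at a time. Fix a class-{\ttfamily a} graph $\gamma$ of type $(2,n,r)$ with cycle length $c$, and compare the coefficients of each target tree in $d_{\mathsf{TT}}\eta(\gamma)$ and in $\eta d_{\mathsf{L}}(\gamma)$. In the quotient $\mathsf{L}(n,r)$, the differential $d_{\mathsf{L}}$ consists only of non-tadpole edge contractions. Adding a marking produces $|R|>r$, hence a class-{\ttfamily b} graph, which is zero in the quotient. Contracting edges produces several kinds of terms, and we sort them as follows.
\begin{enumerate}
\item[(i)] Contracting an edge off the cycle. The resulting graph has the same cycle, and this contraction commutes with cycle contraction. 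We check that the sum over $G$ is preserved. If the contracted edge joins a neutral cycle vertex $v_j$ to a branch vertex, then the inputs of $v_j$ in the new graph are the old inputs of $v_j$ other than that edge, together with the inputs of the absorbed vertex. In $\mathsf{TT}$, contracting the corresponding edge at the DV redistributes a blue marking on that flag over exactly these new flags. This is precisely the $\mathbf{FS}$-structure of $\mathsf{C}_{(b)^\ast}\circ\mathsf{C}_{(r)^\ast}$, so the two sides agree up to a sign, which the fixed orderings (cycle edges last) make uniform.
\item[(ii)] Contracting an edge on the cycle between two neutral vertices. This sends an even cycle to an odd one of length $\ge 3$, where $\eta=0$. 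Symmetrically, the odd graphs with $c\ge 3$ contribute nothing on the $\eta$ side.
\item[(iii)] Contracting a marked cycle edge adjacent to the DV. This absorbs a neutral vertex $v_1$ into the DV and distributes the marking over the newly adjacent flags, namely the inputs of $v_1$ and the next cycle edge.
\end{enumerate}

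Case (iii) is the main obstacle, and we organize it by parity. When $c$ is odd, $\eta(\gamma)=0$. If $c=2i+3$, then contracting either marked adjacent edge and placing its marking on the next cycle edge gives an even-cycle class-{\ttfamily a} graph of length $2i+2$. Placing it instead on an input gives a graph that fails ({\ttfamily a3}), hence zero. The absorbed vertex's inputs are then unmarked in the new graph. We must show that the two terms coming from the two sides of the cycle cancel after applying $\eta$, using the orientation independence noted at the end of the definition and the $(-1)^i$ comparison.

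When $c=2i+2$, the terms of type (iii) have odd cycle, hence are killed by $\eta$, unless $c=2$. In that case the double edge collapses to a marked tadpole, which $\eta$ sends to $\mathsf{T}(n,0,r-1)$. Meanwhile the marking lands either on the remaining cycle edge or on an input of $v_1$. Here the terms with marking on an input are class-{\ttfamily b}, while the tadpole term should match $d_R$ applied to $\eta(\gamma)\in\mathsf{T}(n,1,r-2)$, which recolors the blue marking on an input of $v_1$ red. This match is still incomplete, because $\eta$ of the tadpole graph only removes the tadpole. We therefore also need the terms in which the marking goes to an input: these are not zero, since the DV stays on a tadpole whose edge is marked by ({\ttfamily a3}) of the other flag. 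Concretely, we enumerate the distributions of the marking and verify that they produce exactly $\sum_{g\in G_1}$ of recolored trees.

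The remaining and delicate part is the global sign bookkeeping. It involves the $(-1)^E$ in $d_{\mathsf{TT}}$, the positions chosen for the cycle edges and their markings in $\det(\gamma,R)$, and the order $g_1\wedge\dots\wedge g_{2i+1}$. We expect this sign check, together with the $c=2$ versus $c=1$ boundary interaction above, to require the most care. We would first verify it on the smallest examples, $c=1,2,3$, and then argue inductively on $c$.
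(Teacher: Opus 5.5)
\textbf{There is a genuine gap in the odd-cycle case.} Your treatment of off-cycle contractions, of even cycles with $c\geq 4$, and of the tadpole case matches the paper. The problem is $c$ odd with $c\geq 3$.

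You set aside the contractions of cycle edges joining two neutral vertices. But for odd $c$, contracting such an edge $e_i$ (between $v_{i-1}$ and $v_i$) produces a class-{\ttfamily a} graph with an \emph{even} cycle of length $c-1\geq 2$. On that graph $\eta$ is nonzero, and the merged neutral vertex has input set $G_{i-1}\sqcup G_i$.

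Your substitute claim, that the contractions of the two marked edges $e_1$ and $e_c$ cancel each other, is false. $\eta(\gamma/e_1)$ sums over blue markings in $G_2\times\cdots\times G_{c-1}$, while $\eta(\gamma/e_c)$ sums over $G_1\times\cdots\times G_{c-2}$. So the two sums are supported on different marked trees. Already for $c=3$ you get ``blue on $G_2$'' versus ``blue on $G_1$''. These can only cancel against the middle term $\eta(\gamma/e_2)$, whose blue marking ranges over $G_1\sqcup G_2$. Since the terms you keep have disjoint supports, no sign bookkeeping, orientation independence, or induction on $c$ can repair this.

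\textbf{The missing idea is the pairing the paper uses.} All $c$ cycle contractions contribute. For $1<i<c$, split the sum for $\gamma/e_i$ according to whether the blue marking on the merged vertex comes from $G_{i-1}$ or from $G_i$. The end terms need no splitting: $\gamma/e_1$ never uses $G_1$, and $\gamma/e_c$ never uses $G_{c-1}$.

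Now fix $1\leq i\leq c-1$ and take a list of blue recipients that omits $G_i$. It occurs exactly twice: once in $\gamma/e_i$ and once in $\gamma/e_{i+1}$. Both occurrences give the same contracted tree with the same ordered blue markings. The two terms come from contracting the adjacent edges $e_i,e_{i+1}$ in the two possible orders, so they carry opposite signs and cancel. This involution on the whole index set is what yields $\eta d_{\mathsf{L}}(\gamma)=0$.

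\textbf{Your $c=2$ discussion contradicts itself and should be cleaned up.} The terms placing the marking of $e_1$ on some $g\in G_1$ are class-{\ttfamily a}, since the new tadpole is still marked by the marking of $e_2$. These are exactly the terms that match $d_R\eta(\gamma)$. The term placing the marking on the other half of the new tadpole is a doubly marked tadpole. It vanishes because the flip automorphism acts by $-1$ on $det^{-1}(R)$.
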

\begin{proof}
It's enough to prove the result holds when restricting the input to a summand indexed by a graph $\gamma$.  We perform a case analysis based on the cycle length $c$.

{\bf Case 1:} $c$ is even and $c>2$. In the quotient complex $\mathsf{L}(n,r)$, the differential terms which add a marking vanish, so we need only consider edge contractions.  Contracting an edge of $\gamma$ which lies on the cycle, then applying $\eta$ gives $0$.  The edges off the cycle index the differential terms in $d_{\mathsf{TT}}\eta$.  This comparison requires permuting the non-cycle edge past the cycle edges, which is an even permutation. It remains to observe that contracting a non-cycle edge which is adjacent to a neutral cycle vertex introduces new adjacent flags which index terms in $\eta$ as recipients for the blue markings.  On the other hand, contraction of this edge after applying $\eta$ distributes the marking over all possible newly adjacent flags.  See Figure $\ref{fig:fig1}$.

\begin{figure}
	\centering
	\includegraphics[scale=1.25]{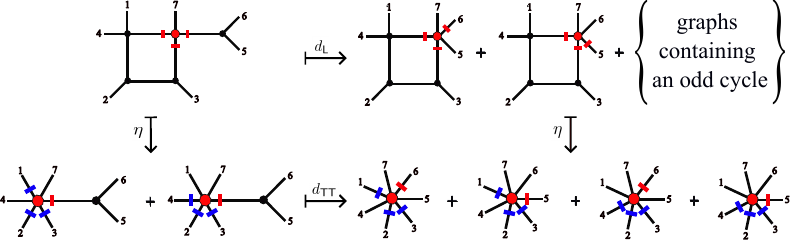}
	\caption{An example of $d_{\mathsf{TT}}\eta=\eta d_{\mathsf{L}}$.  }
	\label{fig:fig1}
\end{figure}

{\bf Case 2:} $c=2$.  The analysis is similar to the previous case, except contracting along the cycle via $d_\mathsf{L}$ doesn't vanish.  If we contract one of the parallel edges via $d_\mathsf{L}$, we distribute the (red) marking, and then can still apply $\eta$ to remove the tadpole.  On the other hand, if we first contract the parallel edges via $\eta$, we sum over choices for the unique blue marking, then apply $d_{R}$ to change the color to red.

{\bf Case 3:} $c=1$.
Terms in $d_{\mathsf{TT}}\eta$ and $\eta d_\mathsf{L}$ both  correspond to contractions of edges of $\mathsf{t}_\gamma$.

{\bf Case 4:} $c$ is odd and $c>1$.   In this case $d_\mathsf{TT}\eta(\gamma) = 0$, so we want to show that $\eta d_\mathsf{L}=0$.  Any terms appearing in $d_\mathsf{L}(\gamma)$ which don't correspond to contracting a cycle edge are mapped to zero by the definition of $\eta$.  So it remains to consider the sum of contractions of edges around the cycle.

As above, we denote the DV as $v_0$ and we enumerate the neutral vertices $v_1\cdc v_{c-1}$, in either orientation, traversing the cycle starting from $v_0$.  For $1\leq i \leq c-1$ we continue with the notation $G_i$ for the input flags adjacent to $v_i$.  In particular $|G_i| = a^{-1}(v_i)-2$.

For $1\leq i\leq c$ we define $e_i$ be the edge which connects $v_{i-1}$ to $v_i$ (mod $c$). Define $d_i$ to be the differential term which contracts $e_i$.  To keep track of the potential recipients for blue markings in each such term, we introduce the following notation:
\begin{itemize}
\item $G[1] = G_2 \times ...  \times G_{c-1}$,

\item $G[i] = G_1 \times ... \times (G_{i-1}\sqcup G_{i})\times G_{i+1}\times ... \times G_{c-1}$ for $1<i<c$,

\item $G[c] = G_1 \times ...  \times G_{c-2}$,

\item $G[\bullet] = \ds\coprod_{i=1}^c G[i]$
\end{itemize}
Notice the set $G[\bullet]$ has a natural involution since each element appearing in $G[i]$ misses either $G_{i-1}$ or $G_i$, and hence the same list appears in either $G[i-1]$ or $G[i+1]$.  For the end points, each list appearing in $G[1]$ appears in $G[2]$ and each list appearing in $G[c]$ appears in $G[c-1]$.  Define this involution (taking one instance of a given list to the other) by $\iota$.

We want to confirm that $\sum_i \eta d_i(\gamma)=0$.  For this, let us abuse notation by identifying the graph $\gamma$ with an element in $\text{det}(\gamma)$, fixing an order of the edges/markings of $\gamma$.  We then compute
$$
\sum_i \eta d_i(\gamma) = \sum_i \eta_{\gamma/e_i} (\gamma/e_i) = \sum_i \ds\sum_{g\in G[i]} \ell(\vec{g},\gamma/e_i).
$$
To conclude, we observe that 
$
\ell(\vec{g},\gamma/e_i) + \ell(\iota\vec{g},\gamma/e_{i\pm 1}) =0,
$
simply because the terms arose from contraction of adjacent edges in either order.  
\end{proof}

\subsubsection{Filtration of the mapping cone.}

To complete the proof it suffices to show the following complex (the mapping cone of the morphism $\eta$) is acyclic.   

\begin{definition}  Let $r\geq 2$.  
Define a chain complex $\mathsf{A}(n,r)$ as follows.  The underlying graded vector space is $$\mathsf{A}(n,r)= \mathsf{L}(n,r)\oplus  \Sigma^{-1}\mathsf{TT}(n,r)$$		

	The differential $d_\mathsf{A}$ has the form $d_{\mathsf{L}}+d_{\mathsf{TT}}+\eta$ where the first two are the component-wise differentials on the direct sum and where $\eta$ contracts even cycles, as above. \end{definition}

We introduce a filtration, call it $\zeta$, of the complex $\mathsf{A}(n,r)$ designed to isolate two types of differential terms -- those which contract neutral cycle edges and those which collapse even length cycles (or remove a tadpole) to produce a tree, provided that each neutral cycle vertex of the original graph was trivalent.

To define the filtration, we start with the case of $\gamma$, a marked graph of type $(2,n,r)$.  We let $e$ be the number of edges, let $w$ be the (valence of) the DV, and let $v_i$ for $i=1,...,c-1$ be the (valence of the) neutral cycle vertices.  By abuse of notation we let $\zeta(\gamma)$ denote the filtration degree of the summand supported by $\gamma$.   We define this filtration degree by:
$$\zeta(\gamma) = 2w-e-c+\sum_{i=1}^{c-1}v_i.$$
This defines $\zeta$ on the summand $\mathsf{L}(n,r)$.  Next we define $\zeta$ on the summand $\mathsf{TT}(n,r)$.  For this we let $\mathsf{t}$ be a pointed $n$-tree along with a choice of $b$ blue and $r$ red marked flags at the DV.  Let $E$ be the number of edges of the tree and let $W$ be the valence of the distinguished vertex.  We again abuse notation by letting $\zeta(\mathsf{t})$ denote the filtration degree of the summand supported by $\mathsf{t}$, and define:
$$\zeta(\mathsf{t}) = 2W-E-b+2$$

Let us verify both that $\zeta$ is indeed an increasing filtration (the filtration degree can't decrease upon applying differential) while also considering which differential terms do or do not fix the filtration degree.

\begin{itemize}
\item Upon contraction of a neutral cycle edge, the sum $\sum_i v_i$ decreases by 2, $c$ decreases by $1$, $e$ decreases by $1$ and $w$ is fixed.  Thus the filtration degree is fixed. 
\item  Upon contraction of a marked cycle edge, $e$ and $c$ each decrease by 1, the sum $w+\Sigma_iv_i$ decreases by $2$, but $w$ itself must increase, so the filtration degree increases. 
\item Upon contracting an edge not on the cycle, $e$ decreases, $c$ is fixed and $2w+\sum_iv_i$ can not decrease.  Thus the filtration degree must increase. 

\item Regarding $d_{\mathsf{TT}}$, upon contracting a tree edge $E$ must go down and $2W-b+2$ can't decrease, hence the filtration degree must increase.  The $d_R$ piece of the differential decreases the number of blue markings, so also increases the filtration degree.

\item If $c(\gamma)=1$, the differential term coming from $\eta$ removes the tadpole. The filtration degree of the source is $2w-e-1$ and of the target is  $2(w-2) - (e-1)-0+2$, so is preserved.  

\item Finally, suppose that $\gamma$ has an even length cycle of length $c\geq 2$.  Upon contraction (application of the map $\eta$ above) we get a sum of trees with $E=e-c$ edges, $b=c-1$ blue markings and with a distinguished vertex of valence $W$ = $w+\sum_iv_i - 2c$.  Thus, the filtration degree of each such tree is  $\zeta(\mathsf{t}) 
= 2w + 2\sum_iv_i - 4c - e+3.$  
Hence, 
$$\zeta(\mathsf{t}) - \zeta(\mathsf{\gamma}) = \sum_{i=1}^{c-1} v_i -3c+3 \geq 0.$$
The last inequality holds due to stability at the neutral vertices.  We conclude that the filtration degree increases unless each neutral cycle vertex is trivalent, in which case the filtration degree is preserved.
\end{itemize}
Generate an equivalence relation on the set of graphs appearing in $\mathsf{A}(n,r)$ by saying that two graphs are equivalent if one appears as a differential term of the other in the associated graded of $\zeta$.   Define the ``order'' of an equivalence class to be the longest cycle length of any graph belonging to it.  By construction, the associated graded splits over such equivalence classes.  The isomorphism type (and hence the homology) of a summand in this splitting depends only on the order of the equivalence class.  

To see this, we choose a standard representative for each order $c$.  Given $c$, consider the polygon in $\mathsf{L}(c,3)$ having $c$ trivalent vertices, having the leg adjacent to the DV labeled by the number $c$ and with neutral vertices adjacent to the legs $1,...,n-1$ in order around the cycle.  Given another equivalence class of order $c$, we choose a graph $\gamma$ which  represents this class and has a cycle of length $c$.  We then choose an order on the set of $c-1$ neutral branches of $\gamma$.  The isomorphism from this summand to the standard summand simply replaces the branch $i$ with a leg labeled by $i$, and replaces the distinguished branch by the leg labeled by $n$.

The summand associated to such a graph has a basic combinatorial description, closely related to the coinvariants of the cellular chains on the Permutohedron, denote $\Pi(c-1)$ by the action of reflecting the word labeling a cell.  We verify this complex is acyclic in the following section.

\subsubsection{Permutohedron Lemma}

Let's prove the core combinatorial argument that will be needed to describe the complex of neutral edge contractions around the cycle.

For $n\geq 1$ define a rational chain complex $\Pi(n)$ as follows.  An ordered partition of $\{1 \cdc n\}$ is a partition of this set along with a total order on the set of blocks of said partition.   Define $\Pi(n)_i$ to be the span of the set of ordered partitions of $\{1 \cdc n\}$ having $i$ blocks.  Define a differential $\Pi(n)_i\to \Pi(n)_{i-1}$ by
$$
d(B_1\cdc B_i) = \sum_{j=1}^{i-1} (-1)^{j-1} (B_1\cdc B_j\sqcup B_{j+1}\cdc B_i),
$$
where $(B_1\cdc B_j\sqcup B_{j+1}\cdc B_i)$ denotes the ordered partition formed by combining the elements of adjacent blocks $B_j$ and $B_{j+1}$ to form a single new block, while preserving the remaining blocks and the given order.

The chain complex $\Pi(n)$ is isomorphic, after suitable shift in degree, to the cochains on a permutohedron, and the following is well known.  

\begin{lemma}  $H_\ast(\Pi(n)) = \Sigma^n \mathbb{Q}$
\end{lemma}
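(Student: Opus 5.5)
The plan is to prove the statement by induction on $n$, using a short exact sequence of chain complexes that splits off the element $n$. It may be cleaner to work in the shifted convention, where ordered partitions with $i$ blocks sit in degree $i$. The differential merges adjacent blocks, so it lowers $i$. The claim is then that the homology is one-dimensional and concentrated in the top degree $i=n$. There it is spanned by the alternating sum of all ordered partitions into singletons, i.e.\ the signed sum over all permutations.

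For the base case, $n=1$ gives $\Pi(1)$, which is one-dimensional in degree $1$, so the claim holds. For the inductive step, consider the position of $n$.
\begin{enumerate}
\item Let $S\subset\Pi(n)$ be the span of ordered partitions in which $\{n\}$ is not a singleton block. This is a subcomplex, since merging blocks can never create a singleton $\{n\}$.
\item The quotient $Q=\Pi(n)/S$ is spanned by ordered partitions containing the singleton block $\{n\}$. It decomposes according to the blocks before and after $\{n\}$. The induced differential is the Koszul-signed sum of the differentials on the ordered partitions of the subset $X$ to the left and of the subset $Y$ to the right, where $X\sqcup Y=\{1\cdc n-1\}$. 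Merges involving $\{n\}$ land in $S$, so they vanish in $Q$. Hence $Q\cong\bigoplus_{X\sqcup Y}\tilde\Pi(X)\otimes\tilde\Pi(Y)$, where $\tilde\Pi(\emptyset)=\mathbb{Q}$ in degree $0$.
\end{enumerate}

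Next compute the homology of the subcomplex $S$. There are two routes.
\begin{enumerate}
\item The direct route is the map $S\to\Pi(n-1)$ that deletes $n$ from its block. This map is a chain map, and an isomorphism on underlying sets, except that deleting $n$ can change which merges are possible; I expect it to be an isomorphism of complexes outright.
\item If that fails, filter $S$ by the block containing $n$ and compare with $\Pi(n-1)$ by a contracting homotopy. The homotopy would split $n$ off into a singleton adjacent to its block.
\end{enumerate}

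The step I expect to be the main obstacle is the case where an entire side is empty, together with checking the sign conventions in the tensor decomposition and the connecting map. This is routine but fiddly, since $Q$ has a large homology of its own. Note that $Q$ is a sum of tensor products, and by induction each nonempty factor is one-dimensional in top degree. So the homology of $Q$ is $\bigoplus_{X\sqcup Y}\mathbb{Q}$, one class for each pair $(X,Y)$, in degree $|X|+|Y|+1=n$. That is $2^{n-1}$ classes, and the long exact sequence cannot absorb them unless the connecting map is large.

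For that reason I now realize that the decomposition should not be $S$ versus $Q$ as above. Instead I would use the cleaner standard argument, a contracting homotopy on the full complex. Define $h$ by splitting the block containing $n$ into $\{n\}$ followed by the rest of that block, whenever the rest is nonempty. One checks that $dh+hd=\mathrm{id}$ on every ordered partition, except those in which $\{n\}$ is already a singleton that is immediately followed by a block. Iterating this, a standard discrete Morse matching with $n$ paired to its neighbours, leaves exactly one unmatched critical cell in each top degree. Comparing ranks, $\Pi(n)$ has Euler characteristic $\sum_i(-1)^i i!\,S(n,i)=(-1)^n$. Together with the Morse matching, this gives homology $\mathbb{Q}$ in degree $n$, i.e.\ $\Sigma^n\mathbb{Q}$. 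This last identity is the cellular chain complex of the permutohedron, a contractible polytope, read cohomologically, and it could alternatively be cited as well known.
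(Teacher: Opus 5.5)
\textbf{There is a genuine gap in your final argument, at its central step.} The homotopy $h$ splits $\{n\}$ off immediately to the left of the rest of its block. It does \emph{not} satisfy $dh+hd=\mathrm{id}$ away from the partitions in which $\{n\}$ is a singleton followed by a block.

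Already for $n=3$ and $x=(\{1\},\{2,3\})$ we have $h(x)=\pm(\{1\},\{3\},\{2\})$. Then $dh(x)+hd(x)$ contains, besides $\pm x$, two further basis vectors, which cannot cancel:
\begin{itemize}
\item $(\{1,3\},\{2\})$, from merging $\{n\}$ with its left neighbour in $h(x)$;
\item $(\{3\},\{1,2\})$, from applying $h$ after merging the left neighbour into the block of $n$.
\end{itemize}
Conversely, the identity holds on $(\{3\},\{1,2\})$, which you list as exceptional, and fails on $(\{1,2\},\{3\})$.

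In matching terms, your pairing leaves unmatched exactly the ordered partitions whose last block is $\{n\}$. These are in bijection with the ordered partitions of $\{1,\dots,n-1\}$, not a single cell. So ``exactly one critical cell'' requires an iteration whose acyclicity and Morse differential you never check. The Euler characteristic $\sum_i(-1)^i i!\,S(n,i)=(-1)^n$ is correct, but it only helps once concentration in degree $n$ is known. In your abandoned first route, deleting $n$ gives a map $S_i\to\Pi(n-1)_i$ that is $i$-to-one, since $n$ can sit in any of the $i$ blocks, so it is not an isomorphism either.

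\textbf{The idea is easily repaired, and then gives an honest elementary proof.} The paper itself gives no proof. It identifies $\Pi(n)$, after regrading, with cellular cochains on the permutohedron and cites the result as well known; this is your fallback. Strictly, a sign twist is also needed there, since in $\Pi(n)$ both endpoints of an edge receive the same sign.

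For the repair, let $D\subset\Pi(n)$ be the span of ordered partitions whose last block is not $\{n\}$. Merging cannot create a last block equal to $\{n\}$, so $D$ is a subcomplex. The quotient $\Pi(n)/D$, with basis $(B_1,\dots,B_{i-1},\{n\})$, is $\Sigma\Pi(n-1)$ with identical signs, because the only term lost is the merge into $\{n\}$.

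To see that $D$ is acyclic, filter by the number of blocks preceding the block of $n$. This number cannot increase under $d$. The associated graded discards every merge involving a block to the left of the block of $n$, in particular the two offending terms above. Each summand of the associated graded is then, up to a global sign, the complex of ordered partitions of a set $Z\ni n$ with $n$ in the first block. The summand with $Z=\{n\}$ is empty by the definition of $D$. On the summands with $|Z|\geq 2$, your $h$ (placing $\{n\}$ first) genuinely satisfies $dh+hd=\mathrm{id}$.

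Hence $H_\ast(\Pi(n))\cong H_\ast(\Sigma\Pi(n-1))=\Sigma^n\mathbb{Q}$ by induction. The class is represented by the alternating sum over permutations, as you said.
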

	
	

Now define an action of the group $\mathbb{Z}_2$ on $\Pi(n)_i$ by declaring that the generator reverses the order of the blocks: 
$$
(B_1, B_2 \cdc B_{i-1}, B_i)\mapsto (-1)^{\lfloor (i-1)/2 \rfloor} (B_i,  B_{i-1}\cdc B_2, B_1)
$$

Let us check that this $\mathbb{Z}_2$ action is compatible with the differential: 
\begin{lemma} $\tau d = d\tau$
\end{lemma}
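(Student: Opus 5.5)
The plan is a direct comparison of the two sides on a basis element $(B_1\cdc B_i)\in\Pi(n)_i$. Both $\tau d$ and $d\tau$ produce signed sums of ordered partitions with $i-1$ blocks. Each such term is obtained by merging one adjacent pair of the original blocks $B_k, B_{k+1}$ and reversing the order of the blocks. So I will index the terms on both sides by the same $k\in\{1\cdc i-1\}$ and show that the signs agree term by term. Write $s_i:=(-1)^{\lfloor (i-1)/2\rfloor}$. The case $i=1$ is trivial, since both sides vanish.

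\emph{Computing $\tau d$.} The term of $d$ merging $B_k$ and $B_{k+1}$ carries the sign $(-1)^{k-1}$ and has $i-1$ blocks. Applying $\tau$ then multiplies by $s_{i-1}$ and reverses the order, giving
$$(-1)^{k-1}s_{i-1}\,(B_i\cdc B_{k+1}\sqcup B_k\cdc B_1).$$

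\emph{Computing $d\tau$.} First $\tau$ gives $s_i(B_i\cdc B_1)$. In this reversed list, position $j$ holds $B_{i+1-j}$. Merging positions $j$ and $j+1$ therefore merges the original blocks $B_{k+1}$ and $B_k$ with $k=i-j$, and it carries the sign $(-1)^{j-1}=(-1)^{i-k-1}$. The resulting ordered partition is exactly the one appearing in $\tau d$, because the merged block is an unordered union.

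\emph{Reduction to a parity condition.} Comparing coefficients, it suffices to check
$$s_i(-1)^{i-k-1}=s_{i-1}(-1)^{k-1}\quad\text{for every }k,\qquad\text{i.e.}\qquad s_is_{i-1}=(-1)^i.$$
This holds by a parity check on $\lfloor (i-1)/2\rfloor+\lfloor (i-2)/2\rfloor$:
\begin{itemize}
\item for $i=2m$ the sum is $2(m-1)$, which is even, matching $(-1)^i=1$;
\item for $i=2m+1$ the sum is $m+(m-1)$, which is odd, matching $(-1)^i=-1$.
\end{itemize}

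The only real obstacle is bookkeeping: correctly matching the merge index $j$ in the reversed word with the merge index $k$ in the original word. Once that is done, the check is the one-line parity computation above.
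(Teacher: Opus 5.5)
Your proposal is correct and follows essentially the same route as the paper. Both arguments expand $\tau d$ and $d\tau$ on a basis element $(B_1\cdc B_i)$ and match terms using $B_k\sqcup B_{k+1}=B_{k+1}\sqcup B_k$ together with the reindexing of merge positions in the reversed word. Both then reduce the claim to the parity of $\lfloor (i-1)/2\rfloor+\lfloor (i-2)/2\rfloor$, which the paper records as the identity $\lfloor (i-1)/2\rfloor+\lfloor (i-2)/2\rfloor=i-2$.
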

\begin{proof} A straight-forward computation: $\tau d(B_1\cdc B_i) = $
$$
\sum_{j=1}^{i-1} (-1)^{j-1} \tau(B_1\cdc B_j\sqcup B_{j+1}\cdc B_i) 
= \sum_{j=1}^{i-1} (-1)^{j-1} (-1)^{\lfloor (i-2)/2 \rfloor} (B_i, B_{i-1}\cdc B_j\sqcup B_{j+1}\cdc B_1)
$$
and likewise
$$
d\tau (B_1\cdc B_i) =  (-1)^{\lfloor (i-1)/2 \rfloor} d(B_i\cdc B_1)
 = (-1)^{\lfloor (i-1)/2 \rfloor} \sum_{j=1}^{i-1} (-1)^{i-j+1} (B_i\cdc B_{j+1}\sqcup B_{j}\cdc B_1)
$$
since $B_j\sqcup B_{j+1} = B_{j+1} \sqcup B_{j}$ it's enough to check the signs.  The comparison follows immediately from
$ \lfloor  \frac{i-1}{2} \rfloor + \lfloor  \frac{i-2}{2} \rfloor = i-2. $
\end{proof}

\begin{corollary}  
	$$H_\ast(\Pi(n)_{\mathbb{Z}_2}) \cong \begin{cases}
	\Sigma^n sgn_n& \text{ if } n \text{ is odd}, \\
	                0 & \text{ if } n \text{ is even}.
	\end{cases}
$$
\end{corollary}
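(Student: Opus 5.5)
The plan is to reduce to the action of $\tau$ on the one-dimensional homology computed in the preceding lemma. We work over $\mathbb{Q}$, and $\mathbb{Z}_2$ is finite, so the coinvariants functor $(-)_{\mathbb{Z}_2}$ is exact. It therefore commutes with homology, giving
$$H_\ast(\Pi(n)_{\mathbb{Z}_2})\cong H_\ast(\Pi(n))_{\mathbb{Z}_2}.$$
Here $\mathbb{Z}_2$ acts on $H_\ast(\Pi(n))$ through $\tau$, which is a chain map by the previous lemma. The lemma $H_\ast(\Pi(n))=\Sigma^n\mathbb{Q}$ tells us the homology is one-dimensional and concentrated in the top degree $n$. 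So the coinvariants are either that line or $0$, according to whether $\tau$ acts on it by $+1$ or $-1$.

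The first step is to write down an explicit generator of the top homology. In top degree the ordered partitions all have singleton blocks, so they correspond to permutations. Consider the element
$$z=\sum_{\sigma\in S_n}\mathrm{sgn}(\sigma)\,(\{\sigma(1)\},\dots,\{\sigma(n)\}).$$
It is a cycle: the term of $dz$ merging positions $j$ and $j+1$ arises from both $\sigma$ and $\sigma\circ(j\ j+1)$. These two terms carry the same sign $(-1)^{j-1}$ but opposite signs $\mathrm{sgn}(\sigma)$, so they cancel. Nothing lies in degree $n+1$, so $z$ is not a boundary and therefore spans $H_n(\Pi(n))$. Relabeling the set $\{1,\dots,n\}$ by $\pi$ sends $z$ to $\mathrm{sgn}(\pi)z$, which accounts for the $sgn_n$ in the statement. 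The $S_n$ action commutes with $\tau$, so this identification passes to the coinvariants.

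The second step, and the only real computation, is to evaluate $\tau(z)$. Reversing the order of blocks replaces $\sigma$ by $\sigma\circ w_0$, where $w_0$ is the longest permutation, with $\mathrm{sgn}(w_0)=(-1)^{n(n-1)/2}$. Combined with the prescribed sign for $i=n$ blocks, this gives
$$\tau(z)=(-1)^{\lfloor (n-1)/2\rfloor + n(n-1)/2}\,z.$$
It remains to check the parity of the exponent modulo $4$ in $n$:
\begin{itemize}
\item for $n=1,2,3,4$ the exponents are $0,1,2,5$;
\item hence the exponent is even exactly when $n$ is odd, and this pattern is $4$-periodic.
\end{itemize}
So $\tau$ acts by $+1$ when $n$ is odd and by $-1$ when $n$ is even. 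Taking coinvariants yields $\Sigma^n sgn_n$ for $n$ odd and $0$ for $n$ even.

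The step most likely to go wrong is the sign bookkeeping in $\tau(z)$. One must use the sign convention for $\tau$ exactly as defined, with $i=n$ blocks, and check the mod $4$ pattern carefully, since an off-by-one in the floor function would swap the two parity cases.
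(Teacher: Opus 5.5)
Your proof is correct, but it takes a different route from the paper.

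\textbf{The paper's argument.} The paper never writes down a cycle representative. It notes that $\mathbb{Z}_2$ acts trivially on the one-dimensional $\Pi(n)_1$. It also notes that $\mathbb{Z}_2$ acts freely on $\Pi(n)_i$ for $i\geq 2$, since reversing a list of distinct blocks has no fixed points. This gives $2(\chi(\Pi(n)_{\mathbb{Z}_2})+1)=\chi(\Pi(n))+1=(-1)^n+1$. Because the homology is concentrated in degree $n$, this determines the rank as $1$ or $0$ according to parity. The paper then identifies the $S_n$-type separately. $\Pi(n)_n$ is the regular representation, and the trivial-isotypic vector $\sum_\sigma \sigma(\{1\},\dots,\{n\})$ is not a cycle. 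So the one-dimensional homology must be the sign representation.

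\textbf{Your argument.} You exhibit $z=\sum_\sigma \mathrm{sgn}(\sigma)(\{\sigma(1)\},\dots,\{\sigma(n)\})$ and compute $\tau(z)$ and $\pi z$ directly. This settles the rank and the representation type in one computation. It also tells you exactly how $\tau$ acts on $H_n(\Pi(n))$, namely by $(-1)^{n-1}$.

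\textbf{Comparison.} The paper's counting argument avoids sign bookkeeping for an explicit representative. The price is a separate, somewhat less direct argument for the representation type.

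\textbf{A small arithmetic slip.} The exponent $\lfloor (n-1)/2\rfloor+n(n-1)/2$ equals $4$ and $7$ for $n=3,4$, not $2$ and $5$. The parities are the same, so your conclusion stands. You can drop the case check entirely: use $n(n-1)/2\equiv\lfloor n/2\rfloor \pmod 2$ together with $\lfloor (n-1)/2\rfloor+\lfloor n/2\rfloor=n-1$.
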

\begin{proof}   
	Since we are working with rational coefficients, it suffices to calculate $ H_\ast(\Pi(n))_{\mathbb{Z}_2}$, which can only be non-zero in the top dimension, dimension $n$.  To determine the rank of the homology in this dimension it suffices to compute the Euler characteristic of $\Pi(n)_{\mathbb{Z}_2}$.  Since $\mathbb{Z}_2$ acts freely on $\Pi(n)_i$ for $i\geq 2$ we can calculate the Euler characteristic of $\Pi(n)_{\mathbb{Z}_2}$ in terms of $\Pi(n)$ via:
	$$
	2(\chi(\Pi(n)_{\mathbb{Z}_2})+1) = \chi(\Pi(n))+1 = (-1)^n+1
	$$
	to conclude that $\chi(\Pi(n)_{\mathbb{Z}_2})$ is $0$ if $n$ is even and is $-1$ if $n$ is odd.
	
It remains to confirm the representation type of the non-trivial class when $n$ be odd.  Here $S_n$ acts on $\Pi(n)_n$ simply by relabeling the lone entry in each block, hence $\Pi(n)_n$ is simply the regular representation.  Its decomposition into irreducibles would contain a unique copy of the trivial representation spanned by $\Sigma_{\sigma\in S_n }\sigma(\{1\},\{2\},\{3\}\cdc \{n\})$.  Taking the differential we see the term $(\{1,2\},\{3\}\cdc \{n\})$ appearing exactly twice and with the same sign, hence can't be a cycle.  Thus it must be case that the lone copy of the alternating representation supports the homology. \end{proof}


We now observe that this result concludes the proof.  Each summand in the $\zeta$ filtration is acyclic -- when the order $c$ is odd and $c\geq 3$, the summand is isomorphic to $\Pi(c-1)_{\mathbb{Z}_2}$.  When the order $c$ is even the summand is isomorphic to $\Pi(c-1)_{\mathbb{Z}_2}\to \mathbb{Q}\mathsf{t}_{\vec{g}}$, and this connecting homomorphism is an isomorphism since the blue markings are alternating.  Similarly when the order is $1$, the only differential term removes the tadpole, hence is an isomorphism.

\subsection{(Non)Vanishing of $\mathsf{gr}_{11}H_c^\ast(\op{M}_{2,n})$.}

Combining our Theorem $\ref{decompthm}$ with the results of \cite{PW} we find:
\begin{theorem}\label{nonzero}
	For every $n\geq 13$ there exists injections of $S_n$-representations
	$$\bigoplus_{\substack{0\leq j \leq n-12 \\ j \ \equiv \ n \text{ mod 2} }} H_{2j}(F(\mathbb{R}^3,n-1))^{\oplus 2}\tensor sgn_n\hookrightarrow \mathsf{gr}_{11} H_c^{n+3}(\op{M}_{2,n})  $$ 
	and
	$$\bigoplus_{\substack{0\leq j \leq n-13 \\ j \ \not\equiv \ n \text{ mod 2} }} H_{2j}(F(\mathbb{R}^3,n-1))^{\oplus 2}\tensor sgn_n\hookrightarrow \mathsf{gr}_{11} H_c^{n+2}(\op{M}_{2,n})  $$ 
\end{theorem}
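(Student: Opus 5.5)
Combine Theorem \ref{decompthm} at $r=11$ with the identification of \cite{PW} of $B(2,n,11)$ as computing $\mathsf{gr}_{11}H_c^\ast(\op{M}_{2,n})$, up to a shift. Payne--Willwacher show $\mathsf{gr}_{11}H^k_c(\op{M}_{g,n})$ is, in our notation, a sum of copies of $H^{\ast}(B(g,n,11))$ tensored with the two-dimensional space of weight $11$ cusp form data, up to a fixed shift. This accounts for the multiplicity $\oplus 2$. So the first step is to record the precise degree shift relating $H^j(B(2,n,11))$ to $\mathsf{gr}_{11}H_c^{k}(\op{M}_{2,n})$. The Corollary to Theorem \ref{decompthm} places $H^\ast(B(2,n,11))$ in degrees $n-19$ and $n-20$. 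These must line up with $n+3$ and $n+2$, which fixes the shift (I expect $k=j+22$, matching the $2r$ type shift, but I will read it off from \cite{PW}).

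Next, locate Whitehouse-type summands inside the right-hand side of Theorem \ref{decompthm}. With $r=11$ the summands are $\Sigma^{2i}\Omega(\mathsf{C}_{(1^{2i+1})}\circ\mathsf{C}_{(1^{9})})(n)$. By the Littlewood--Richardson rule (dual form, on columns), $V_{1^{2i+1}}\circ V_{1^9}$ contains $V_{1^{2i+10}}$ with multiplicity one. Hence $\mathsf{C}_{(1^{2i+1})}\circ\mathsf{C}_{(1^9)}$ contains $\mathsf{C}_{1^{2i+10}}$ as a direct summand of $\mathbf{FA}$-modules, since $\mathsf{C}$ is additive. Because $\Omega$ commutes with direct sums, $\Phi[n,(1^{2i+10})]$, shifted by $\Sigma^{2i}$ and by $\Sigma^{-(r-2)}$, is a direct summand of $H^\ast(B(2,n,11))$.

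Then apply Example \ref{WHex}. Set $m=2i+10$. The group $\Phi[n,(1^m)]\otimes sgn_n$ is $H_{2(n-m)}(F(\mathbb{R}^3,n-1))$ in degree $n-m$ and $H_{2(n-m-1)}(F(\mathbb{R}^3,n-1))$ in degree $n-m-1$. After the shifts $\Sigma^{2i}$ and $\Sigma^{-9}$, these land in degrees $n-19$ and $n-20$ independently of $i$. Thus the top degree receives $H_{2j}$ with $j=n-m=n-10-2i$, and the lower degree receives $H_{2j}$ with $j=n-11-2i$.

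Finally, match the index ranges. The index $i$ runs over $1\le i\le\lfloor (n-10)/2\rfloor$.
\begin{itemize}
\item In the top degree, $j=n-10-2i$ ranges over $0\le j\le n-12$ with $j\equiv n \pmod 2$. The bound $j\ge 0$ comes from $m\le n$, since otherwise $\Phi=0$.
\item In the lower degree, $j=n-11-2i$ ranges over $j\le n-13$ with $j\not\equiv n\pmod 2$. Here the case $n=m$ gives zero, which is automatic since $j\geq 0$ forces $m\le n-1$.
\end{itemize}
The requirement $n\ge 13$ makes both sums nonempty, or at least meaningful.

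I expect the main obstacle to be bookkeeping rather than mathematics. Specifically, I need to pin down the exact degree shift and multiplicity in the \cite{PW} comparison, and confirm that the top degree of $B$ corresponds to $\mathsf{gr}_{11}H_c^{n+3}$ rather than $H_c^{n+2}$. I would check this against the borderline case $n=11$ or $12$, where only the $\Omega(\mathsf{C}_{(2,1^{8})})$ summand survives.
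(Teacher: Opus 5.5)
Your proposal is correct and is essentially the paper's proof. Both arguments isolate the alternating pieces $\Omega(\mathsf{C}_{(1^{2i+10})})(n)$ inside the summands $\Sigma^{2i}\Omega(\mathsf{C}_{(1^{2i+1})}\circ\mathsf{C}_{(1^{9})})(n)$ of Theorem \ref{decompthm} at $r=11$, apply Example \ref{WHex} to land in degrees $n-19$ and $n-20$, reindex exactly as you did, and transfer via the Payne--Willwacher isomorphism $H_c^\ast(\op{M}_{2,n})\cong\Sigma^{22}H^\ast(B(2,n,11))\otimes\Delta$ with $\dim\Delta=2$. One caution: you must take the shift $22$ from that theorem rather than infer it from the target degrees $n+2,n+3$, since the latter would be circular.
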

This result is the weight 11 analog of the weight 0 result \cite[Theorem 1.1]{GH}.  It has the the following immediate corollary:

\begin{corollary}\label{nonzerocor}
	$\mathsf{gr}_{11} H_c^j(\op{M}_{2,n}) \neq 0$ if and only if $n\geq 13$ and $j \in \{n+2,n+3\}$ or $10\leq n \leq 12$ and $j=n+3$.  In particular, $\mathsf{gr}_{11} H_c^{n+1}(\op{M}_{2,n})=0$ for all $n$.
\end{corollary}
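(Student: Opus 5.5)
The plan is to translate the statement into a statement about the graph complex $B(2,n,11)$ and read off the (non)vanishing from Theorem \ref{decompthm}. First I would recall from \cite{PW} the identification of $\mathsf{gr}_{11}H_c^\ast(\op{M}_{2,n})$ with the cohomology of $B(2,n,11)$, shifted so that the degrees $j$ in the statement correspond to specific degrees of $B(2,n,11)$. The corollary to Theorem \ref{decompthm} says that $H^\ast(B(2,n,11))$ is concentrated in the two degrees $n-19$ and $n-20$. Under the dictionary of \cite{PW} these should become $j=n+3$ and $j=n+2$; I would confirm this by matching with Theorem \ref{nonzero}. Granting this, $\mathsf{gr}_{11}H_c^j(\op{M}_{2,n})=0$ for every $j\notin\{n+2,n+3\}$, and in particular for $j=n+1$.

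For $n\geq 13$, nonvanishing in both degrees $n+2$ and $n+3$ is immediate from Theorem \ref{nonzero}, since each source representation is nonzero there. For $n\geq 13$ the index sets contain $j=0$ or $j=1$, and $H_0$ and $H_2$ of $F(\mathbb{R}^3,n-1)$ are nonzero.

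The remaining task is $n\leq 12$, where I would use the decomposition of Theorem \ref{decompthm} with $r=11$:
$$\Sigma^{9}B(2,n,11)\sim \Omega(\mathsf{C}_{(2,1^8)})(n)\oplus\bigoplus_{i=1}^{\lfloor (n-10)/2\rfloor}\Sigma^{2i}\Omega(\mathsf{C}_{(1^{2i+1})}\circ\mathsf{C}_{(1^{9})})(n).$$
\begin{itemize}
\item \textbf{Case $n<10$.} Since $\mathsf{C}_\lambda(n)=0$ when $n<|\lambda|$, every summand vanishes, so $\mathsf{gr}_{11}H_c^\ast(\op{M}_{2,n})=0$.
\item \textbf{Case $10\leq n\leq 12$.} The first summand contributes $\Phi[n,(2,1^8)]$. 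The $i=1$ summand appears only when $n=12$, and its partitions have size $12$.
\end{itemize}
When $n=|\lambda|$, the complex $\Omega(\mathsf{C}_\lambda)(n)$ is just the corolla labelled by $V_\lambda$, so its cohomology is nonzero and concentrated in degree $0$. For $n=10$ this single degree must land at $j=n+3$, and the degree bookkeeping is the same as in Theorem \ref{nonzero}.

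For $n=11,12$ I need two things: nonvanishing in degree $n+3$, and vanishing in degree $n+2$. I would get both from a direct analysis of the short complexes $\Omega(\mathsf{C}_{(2,1^8)})(n)$. These have at most $n-10\leq 2$ edges, so only trees with one distinguished vertex and at most two neutral vertices occur. For the $n=12$, $i=1$ summand the complex is again just a corolla.

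Concretely, the degree corresponding to $n+2$ is the extreme degree of $\Omega(\mathsf{C}_\lambda)(n)$ (either trees with no edges or trees with the maximal number of edges). Its cohomology is a kernel or cokernel of a single expansion map built from the $\mathbf{FS}$-structure of the simple module $\mathsf{C}_{(2,1^8)}$.
\begin{itemize}
\item If it is the kernel on corollas, I would show injectivity of $\mathsf{C}_\lambda(n)\to\bigoplus_{\text{one-edge trees}}$ using simplicity of $\mathsf{C}_\lambda$ for $\lambda\neq(1^m)$. Roughly, no nonzero element of $\mathsf{C}_\lambda(n)$ is killed by all surjection images.
\item Otherwise it is a cokernel, and I would use an Euler characteristic count via the Littlewood–Richardson/branching computation of $\mathsf{C}_\lambda(n)$.
\end{itemize}
Nonvanishing in degree $n+3$ would follow from the same Euler characteristic computation.

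I expect this borderline degree analysis for $n=11,12$ to be the main obstacle, together with pinning down precisely which degree of $\Omega$ corresponds to $j=n+2$ under the \cite{PW} shift. If the structural argument stalls, the fallback is the explicit finite computation of these small complexes, or a comparison with the low-$n$ tables in \cite{PW}.
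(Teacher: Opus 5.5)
\textbf{There is a genuine gap at $n=12$, in showing that degree $j=n+2$ vanishes.} The rest of your proposal is sound.

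\textbf{What works.} Your reduction to $B(2,n,11)$ and your degree bookkeeping are correct: $j=n+3$ and $j=n+2$ correspond to degrees $n-10$ and $n-11$ of $\Omega(\mathsf{C}_{(2,1^8)})(n)$. The cases $n<10$, $n=10$, $n=11$ and $n\geq 13$ all go through. Your simplicity argument is also sound. An element of $\mathsf{C}_\lambda(n)$ with $n>|\lambda|$ killed by all two-point collapses is killed by every non-injective map. It therefore generates an $\mathbf{FA}$-submodule supported in arities $\geq n$, which is proper, so $H^0(\Omega(\mathsf{C}_\lambda)(n))=0$. For $n=11$ this is exactly the vanishing at $j=n+2$. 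So for $n=10,11$ you have a self-contained argument where the paper simply cites \cite{PW}.

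\textbf{The gap.} For $n=12$, $\Omega(\mathsf{C}_{(2,1^8)})(12)$ has three nonzero degrees $0,1,2$, of dimensions $594$, $8514$ and $19305$. The degree corresponding to $j=n+2=14$ is the \emph{middle} degree $1$, not an extreme one; degree $0$ corresponds to $j=13=n+1$. So your claim that the $n+2$ degree is the kernel or cokernel of a single expansion map is false here.

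Simplicity gives only $H^0=0$, which is the already-known vanishing at $j=n+1$. Once $H^0=0$, the Euler characteristic gives only $\dim H^2-\dim H^1=11385$, which cannot exclude $H^1\neq 0$. What you actually need is that $\Phi[12,(2,1^8)]$ vanishes in degree $1$. Nothing in your plan yields this. Your fallback to the tables of \cite{PW} does not help either: at $n=12$ only an Euler characteristic is available there, as the paper itself remarks.

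\textbf{How the paper closes it.} The paper uses two external inputs. First, the representation stability of \cite{FW}: the family $B(2,n,n-1)^\ast$ stabilizes sharply at $n=8$, which reduces $B(2,12,11)$ to $B(2,8,7)$. Second, Theorem~\ref{decompthm} for $B(2,8,7)$, combined with the computer calculation in \cite{GH} that $\Phi[8,(2,1^4)]$ is concentrated in degree $2$. Note that $\Phi[12,(2,1^8)]$ itself lies outside the range $n\leq 10$ computed in \cite{GH}, so the stability reduction is essential.

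Without this input, or an explicit rank computation of the $n=12$ complex, your argument does not prove $\mathsf{gr}_{11}H_c^{14}(\op{M}_{2,12})=0$. Hence the ``only if'' direction of the corollary at $n=12$ is not established.
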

We dedicate the rest of this subsection to the proof of these results.

\subsubsection{Proof of Theorem $\ref{nonzero}$.}
Expand the decomposition of Theorem $\ref{decompthm}$ via the Pieri rule and consider only those terms corresponding to the alternating representation to see that  $\Sigma^{r-2} B(2,n,r) $ contains a summand isomorphic to  $$\bigoplus_{i=1}^{\lfloor (n-r+1)/2 \rfloor} \Sigma^{2i}\Omega(\mathsf{C}_{(1^{r-1+2i})})(n).$$

Applying Example $\ref{WHex}$, 
and accounting for the shifts by $\Sigma^{2i}$ on the RHS and $\Sigma^{r-2}$ on the LHS of the formula in Theorem $\ref{decompthm}$ we thus see that the cohomology of $B(2,n,r)$ contains a copy of 
$$\bigoplus_{i=1}^{\lfloor (n-r+1)/2 \rfloor} H_{2(n-r+1-2i)}(F(\mathbb{R}^3,n-1))\tensor sgn_n \ \  \text{ and } \ \
\bigoplus_{i=1}^{\lfloor (n-r+1)/2 \rfloor} H_{2(n-r-2i)}(F(\mathbb{R}^3,n-1))\tensor sgn_n$$ in degrees
 $n-2r+3$ and $n-2r+2$ respectively.
Reindexing we find:
$$\bigoplus_{\substack{j=0 \\ j\equiv n-r+1 \text{ mod } 2}}^{n-r-1} H_{2j}(F(\mathbb{R}^3,n-1))\tensor sgn_n \ \  \text{ and } \ \
\bigoplus_{\substack{j=0 \\ j\not\equiv n-r+1 \text{ mod } 2}}^{n-r-2} H_{2j}(F(\mathbb{R}^3,n-1))\tensor sgn_n.$$ 
Finally, we specialize to the case $r=11$, we apply the result of \cite{PW} $$H_c^\ast(\op{M}_{2,n}) \cong \Sigma^{22} H^\ast(B(g,n,11)) \tensor \Delta,$$ where $\Delta:= H^{11}(\overline{\op{M}}_{1,11})$, viewed as a rational vector space of dimension 2, to find these summands concentrated in degrees $n+3$ and $n+2$, as desired.

\subsubsection{Proof of Corollary $\ref{nonzerocor}$.}

We note that Theorem $\ref{decompthm}$ implies that $\mathsf{gr}_{11} H_c^j(\op{M}_{2,n}) = 0$ unless $j= n+3$ or $n+2$.  When $n\geq 13$, Theorem $\ref{nonzero}$ implies $\mathsf{gr}_{11} H_c^j(\op{M}_{2,n})$ is not zero for $j=n+2$ and $n+3$.  The cases $n=10$ and $11$ are given in \cite{PW}.  It thus remains to check the case $n=12$.  Using the computations of \cite{GH} we compute the entire cohomology in this case.

We first apply the result of \cite{FW} to conclude that $B(2,12,11)^\ast$ belongs to a sequence of representation stable chain complexes $B(2,n,n-1)^\ast$ which stabilizes sharply at $n=8$, so it's 
sufficient to compute the homology of $B(2,8,7)$.  For this we apply Theorem $\ref{decompthm}$ to find:
$$H^{\ast-5}( B(2,8,7)) = \Phi[8, (2,1^4)] \oplus \Sigma^2(\Phi[8, (2^3,1^2)] +  \Phi[8,(2^2, 1^4)] + \Phi[8, (2,1^{6})] + \Phi[8,(1^{8})]).$$

The homology of the last four terms is immediate since $\Phi[n,\lambda] = V_\lambda$ (in degree 0) when $\lambda$ is a partition of $n$, so it's enough to compute the homology $\Phi[8, (2,1^4)]$.  For this we invoke the computations of \cite{GH}, who computed $\Phi[n, \lambda]$ for $n\leq 10$.  In particular they computed that $\Phi[8,(2,1^4)]$ is concentrated in degree 2 and has $S_8$-representation isomorphic to 
$[6,1,1]+[5,2,1]+[5,1,1,1]+[4,3,1]+2[4,2,1,1]+[3,3,2]+2[3,3,1,1]+[3,2,2,1]+2[3,2,1^3]+[2,2,2,1,1]+[2,2,1^4]$. 
We therefore conclude that $H^{d}(B(2,12,11))$ is non-zero only if $d=-7$, in which case it is isomorphic to
\begin{eqnarray}\label{15calc}
V:=[6,1^6]+[5,2,1^5]+[5,1^7]+[4,3,1^5]+2[4,2,1^6]+[3,3,2,1^4]+2[3,3,1^6]\\+[3,2,2,1^5]+2[3,2,1^7]  +2[2,2,2,1^6]+2[2,2,1^8] + [2,1^{10}] + [1^{12}] \nonumber
\end{eqnarray}
We remark that this is consistent with the Euler characteristic computed in \cite{PW}, but since it is a decomposition in homology we find:

\begin{corollary} If $j\neq 15$ then $\mathsf{gr}_{11}H_c^j(\op{M}_{2,12}) = 0$.  When $j=15$, we have $\mathsf{gr}_{11}H_c^{15}(\op{M}_{2,12}) = V\tensor \Delta$ where $V$ is the $S_{12}$ representation in Equation $\ref{15calc}$. 
\end{corollary}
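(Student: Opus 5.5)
The corollary should follow by transporting the computation of $H^\ast(B(2,12,11))$ just carried out through the Payne--Willwacher identification $\mathsf{gr}_{11}H_c^\ast(\op{M}_{2,n}) \cong \Sigma^{22}H^\ast(B(2,n,11))\tensor\Delta$ used in the proof of Theorem $\ref{nonzero}$. I would carry this out in three steps, in the following order.

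First, I would fix the computation of $H^\ast(B(2,8,7))$ from Theorem $\ref{decompthm}$ with $n=8$ and $r=7$. The summands are $\Omega(\mathsf{C}_{(2,1^4)})(8)$ and $\Sigma^2\Omega(\mathsf{C}_{(1^3)}\circ\mathsf{C}_{(1^5)})(8)$, the latter being the $i=1$ term; this is the only $i$ allowed, since $\lfloor (8-7+1)/2\rfloor=1$. Expanding $\mathsf{C}_{(1^3)}\circ\mathsf{C}_{(1^5)}$ by the Pieri rule gives exactly the four partitions $(2^3,1^2)$, $(2^2,1^4)$, $(2,1^6)$, $(1^8)$. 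Each of these is a partition of $8=n$. For $|\lambda|=n$ the complex $\Omega(\mathsf{C}_\lambda)(n)$ is supported only on the corolla, so $\Phi[n,\lambda]=V_\lambda$ in degree $0$. For $\Phi[8,(2,1^4)]$ I would import the computation of \cite{GH}: it is concentrated in degree $2$ with the listed $S_8$-character. After the shift $\Sigma^2$ on the second group of summands, all contributions sit in the same degree. Accounting for the overall shift $\Sigma^{r-2}=\Sigma^5$ then places all of $H^\ast(B(2,8,7))$ in a single degree.

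Second, I would pass from $n=8$ to $n=12$ using the representation stability of the sequence $B(2,n,n-1)^\ast$ from \cite{FW}, which stabilizes sharply at $n=8$. Here I must make sure the stabilization maps are understood precisely. They should shift cohomological degree in a controlled way, giving the single surviving degree $d=-7$ at $n=12$. On irreducibles they should act by the standard padding rule, which I expect amounts to adjoining four parts equal to $1$ to each partition (e.g.\ $[6,1,1]\mapsto[6,1^6]$ and $[1^8]\mapsto[1^{12}]$). Matching these pieces term by term then yields $H^\ast(B(2,12,11))=V$ concentrated in degree $-7$, with $V$ as in Equation $\ref{15calc}$.

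Third, I would apply the Payne--Willwacher isomorphism. The shift $\Sigma^{22}$ moves degree $-7$ to degree $15$, which gives $\mathsf{gr}_{11}H_c^{15}(\op{M}_{2,12})\cong V\tensor\Delta$, and vanishing in every other degree follows from the concentration established in the previous steps.

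The only delicate point is the second step: correctly reading off how the stabilization from \cite{FW} shifts degrees and pads partitions. As a consistency check I would compare the resulting graded character with the Euler characteristic computed in \cite{PW}. As a fallback, I would apply Theorem $\ref{decompthm}$ directly at $n=12$, $r=11$ and check the resulting $\Phi$'s against \cite{GH}.
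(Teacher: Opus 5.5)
Your proposal is correct and follows the paper's proof essentially step for step. Like the paper, you reduce to $B(2,8,7)$ via the representation stability of $B(2,n,n-1)^\ast$ from \cite{FW}, compute it with Theorem \ref{decompthm}, the Pieri rule, $\Phi[n,\lambda]=V_\lambda$ for $|\lambda|=n$ and the \cite{GH} value of $\Phi[8,(2,1^4)]$, then apply the Payne--Willwacher shift $\Sigma^{22}$ to reach degree $15$; your padding rule (adjoining four parts equal to $1$) is exactly what turns the $n=8$ answer into Equation \ref{15calc}, though your fallback of working directly at $n=12$ would need $\Phi[12,(2,1^8)]$, which lies outside the range $n\leq 10$ computed in \cite{GH}.
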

This finishes the proof of Corollary $\ref{nonzerocor}$. 
We note that the computation of $\mathsf{gr}_{11}H_c^{\ast}(\op{M}_{2,12})$ is of excess $5$ in the parlance of \cite{PW}.  We remark that for the next case, excess 7, representation stability reduces the computation of $\mathsf{gr}_{11}H_c^{j}(\op{M}_{2,13})$ to knowledge of $\Phi[11,\lambda]$ for $\lambda^\ast=(a,b)$. 

\subsection{Relating $\mathsf{gr}_1(\mathsf{FT}_\Gamma)(n)$ to Configuration Space}

To conclude, we prove the middle statement of Theorem $\ref{ftthm}$.  Recall that for each $n$, $\mathsf{gr}_1(\mathsf{FT}_\Gamma)(n)$ is a chain complex spanned by graphs having $\beta(\gamma) = 1$ and $n$ legs, along with a distinguished vertex of genus $1$, carrying a label in $H^\ast(\Gamma_{1,v})$ of non-zero degree.  The differential is expansion of (non-tadpole) edges.

\begin{lemma}\label{tot}  There is an $S_n$-equivariant quasi-isomorphism
	$$ \mathsf{gr}_1(\mathsf{FT}_\Gamma)(n) \sim \ds\bigoplus_{j=1}^{\lfloor n/2 \rfloor} \Sigma^{4j+1}B(2,n,2j+1).$$
\end{lemma}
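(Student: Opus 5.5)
The plan is to compare the two complexes graph-by-graph. A basis element of $\mathsf{gr}_1(\mathsf{FT}_\Gamma)(n)$ consists of three pieces of data: a graph $\gamma$ with $\beta(\gamma)=1$ and $n$ legs, a distinguished genus $1$ vertex $v$, and a label in $H^\ast(\Gamma_{1,a^{-1}(v)})$ of non-zero degree. All other vertices are labeled by $\mathsf{Com}$. By Lemma \ref{g1fa}, $H^m(\Gamma_{1,\ast})$ vanishes unless $m$ is even and $m\geq 2$, and in that case it is the $\mathbf{FA}$-module $\widetilde{\mathsf{C}}_{1^{m+1}}$. The Feynman transform differential preserves the total internal degree of the labels, so the complex splits as
$$\mathsf{gr}_1(\mathsf{FT}_\Gamma)(n)=\bigoplus_{j\geq 1}\mathsf{gr}_1(\mathsf{FT}_\Gamma)(n)_{2j}.$$
It therefore suffices to produce, for each $j$, a quasi-isomorphism
$$\mathsf{gr}_1(\mathsf{FT}_\Gamma)(n)_{2j}\simeq \Sigma^{4j+1}B(2,n,2j+1),$$
and to check that the left-hand side is acyclic (or zero) when $2j+1>n$. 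The range of the sum $j\le \lfloor n/2\rfloor$ should come from this vanishing. Corollary \ref{lowr} and the degree bounds on $\widetilde{\mathsf{C}}_{1^{2j+1}}(k)$ should handle the borderline cases.

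For fixed $j$, I will replace the label module $\widetilde{\mathsf{C}}_{1^{2j+1}}$ by its resolution $\cdots\to\mathsf{C}_{1^{2j+2}}\to\mathsf{C}_{1^{2j+1}}$, i.e.\ the truncated Koszul complex of Remark \ref{BFTrmk}, or rather its dual coresolution on the cohomology side. I first need to check that this is a resolution of $\mathbf{FA}$-modules. Since $\mathsf{C}_{1^k}$ is the $\mathbf{FA}$-module of $k$-element subsets with sign, this complex is the complex of marked flags at the distinguished vertex with $|R|\ge 2j+1$, whose differential adds or erases a marking. Its cohomology is exactly $\widetilde{\mathsf{C}}_{1^{2j+1}}$ (equivalently the explicit basis of \cite[Lemma 3.6]{WardW}, subsets avoiding a base point). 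I then substitute it into the graph complex vertex-wise. The substitution is legitimate because the construction $A\mapsto \bigoplus_\gamma A(a^{-1}(v))\tensor det(\gamma)$ with edge-expansion differential is exact in the $\mathbf{FS}$-module $A$. To see this, filter by number of edges: the associated graded applies $A$ only to individual vertex sets, so a quasi-isomorphism of $\mathbf{FS}$-modules induces one on the graph complexes.

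After the substitution, the resulting double complex is, basis element for basis element, the complex $B(2,n,2j+1)$. Its elements are marked graphs of genus $\beta=1$ (that is, type $(2,n,r)$ in the paper's convention $\beta=g-1$) with a distinguished vertex carrying $|R|\ge r=2j+1$ sign-twisted markings. The differential expands edges and erases markings, and expanding a marked flag away from the DV marks the new edge. That last rule is precisely how the $\mathbf{FS}$-structure of $\mathsf{C}_{1^k}$ acts under expansion, dual to the distribution of markings under contraction.

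The remaining work is the degree shift and signs. The shift collects three contributions: a $\Sigma$ from the twisting convention relating $\mathsf{FT}$ of an untwisted operad to the $det(\gamma)$ conventions of $B$; the internal degree $2j$ of the label; and the shift $det^{-1}(R)$ assigning degree $-1$ to each of the $2j+1$ markings. I expect these to total $4j+1$; checking them against the case $n=1$, $j=... $ using Corollary \ref{lowr} is a sanity test. I expect this bookkeeping to be the main obstacle, together with confirming that the tadpole conventions agree: $\mathsf{gr}_1$ excludes degree-$0$ tadpoles, and $B$ forbids tadpoles at neutral vertices. The degree-$0$ tadpole quotient should match because neutral vertices carry $\mathsf{Com}$, which has degree $0$.
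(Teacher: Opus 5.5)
Your proposal is correct and is essentially the paper's own argument: split by the internal degree $d=2j$ of the genus-one label via Lemma \ref{g1fa}, replace $\widetilde{\mathsf{C}}_{1^{2j+1}}$ by its truncated Koszul resolution (Remark \ref{BFTrmk}) to obtain $B(2,n,2j+1)$, and invoke homotopy invariance of the Feynman transform, which you justify directly via the edge-number filtration where the paper simply cites it. Two small corrections to your bookkeeping: both sides carry the same factor $det(\gamma)$, so there is no separate twisting shift, and the shift is exactly $d+(d+1)=4j+1$ (label degree $E(\gamma)+d$ versus $E(\gamma)-|R|$ with minimal $|R|=d+1$); and Corollary \ref{lowr} plays no role in the range of $j$, since the only borderline case $2j=n+1$ ($n$ odd) is the single graph with a tadpole at the distinguished vertex, whose coinvariants vanish because the half-edge flip acts by $-1$ on the label $\widetilde{\mathsf{C}}_{1^{n+2}}(n+2)\cong sgn_{n+2}$ and trivially on $det(\gamma)$.
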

\begin{proof}  The chain complex $\mathsf{gr}_1(\mathsf{FT}_\Gamma)(n)$ splits over the internal degree of the unique label $H^{\ast}(\Gamma_{1,m})$.  By Lemma $\ref{g1fa}$, the associated summand of $\mathsf{gr}_1(\mathsf{FT}_\Gamma)(n)_d$ is $0$ unless $d$ is even and satisfies $2\leq d \leq n$.  So it's enough to show that, up to a shift in degree, $\mathsf{gr}_1(\mathsf{FT}_\Gamma)(n)_{2j} \sim	B(2,n,2j+1)$ for the indexed $j$.  This follows by homotopy invariance of the Feynman transform, since the respective sides may be viewed as the genus 2 component of the Feynman transform of weakly equivalent modular cooperads taking the commutative operad in genus $0$ and, in genus 1, taking $\widetilde{\mathsf{C}}_{1^{2j+1}}$ on the one hand and its Koszul resolution (Remark $\ref{BFTrmk}$) on the other.

Regarding degrees, recall that our conventions for the Feynman transform give elements on the left hand side degree $E(\gamma)+d$, where-as elements on the right hand side have degree $E(\gamma) - |R|$.  The homotopy equivalence of modular co-operads sends a degree $d$ class to the minimum number of markings which is $d+1$, hence the shift by $2d+1 = 4j+1$.\end{proof}

Combining this Lemma with Theorem $\ref{decompthm}$ we find:

\begin{proposition} There exists an isomorphism of graded $S_n$-modules:
$$H^\ast(\mathsf{gr}_1(\mathsf{FT}_\Gamma))(n)\cong \bigoplus_\lambda  \Sigma^{|\lambda|+2} p_\lambda\Phi[n,\lambda^\ast] $$
where $p_\lambda$ is non-zero only if $\lambda$ is of the form $\lambda=(a,b)$, in which case:
	$$
	p_{(a,b)} = \begin{cases}
		(a-2)/2	&	a\equiv_2 b\equiv_2 0 \text{ and } b=0\\
		(a-b)/2	&	a\equiv_2 b\equiv_2 0 \text{ and } b\neq 0 \\
		(a-b)/2+1	&	a\equiv_2 b \equiv_2 1 \\
		0	&	a \not\equiv_2 b
	\end{cases}
	$$
\end{proposition}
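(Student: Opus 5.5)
The plan is to combine Lemma \ref{tot} with Theorem \ref{decompthm}, expand the resulting $\mathbf{FA}$-modules into simples $\mathsf{C}_\lambda$ via the Pieri rule, and then count multiplicities. Throughout, $\Omega$ commutes with finite direct sums and $\Phi[n,\lambda]=H^\ast(\Omega(\mathsf{C}_\lambda))(n)$ by definition.

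\emph{Step 1: reduce to Theorem \ref{decompthm}.} By Lemma \ref{tot}, $H^\ast(\mathsf{gr}_1(\mathsf{FT}_\Gamma))(n)\cong\bigoplus_{j=1}^{\lfloor n/2\rfloor}\Sigma^{4j+1}H^\ast(B(2,n,2j+1))$. Set $r=2j+1\geq 3$, so Theorem \ref{decompthm} applies, and write $\Sigma^{4j+1}B=\Sigma^{2j+2}(\Sigma^{r-2}B)$. Each summand then becomes
$$\Sigma^{2j+2}\Omega(\mathsf{C}_{(2j-1,1)^\ast})(n)\oplus\bigoplus_{i\geq 1}\Sigma^{2j+2+2i}\Omega(\mathsf{C}_{(2i+1)^\ast}\circ\mathsf{C}_{(2j-1)^\ast})(n).$$
All partitions $\lambda$ occurring here have $|\lambda|=2j$ in the first term and $|\lambda|=2i+2j$ in the others. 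Hence every shift equals $|\lambda|+2$, which explains the uniform $\Sigma^{|\lambda|+2}$ in the statement. The upper limits on $i$ and $j$ just encode $|\lambda|\le n$, and beyond that $\Phi[n,\lambda]=0$ anyway, so I can drop these limits.

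\emph{Step 2: decompose.} Conjugation commutes with induction up to conjugating every constituent, since $V_{\mu^\ast}\cong V_\mu\otimes sgn$. Applying the stated Pieri special case $\mathsf{C}_{(p)}\circ\mathsf{C}_{(q)}=\bigoplus_{k=0}^{q}\mathsf{C}_{(p+k,q-k)}$ for $p\ge q$ and conjugating gives
$$\mathsf{C}_{(p)^\ast}\circ\mathsf{C}_{(q)^\ast}=\bigoplus_{k}\mathsf{C}_{(\max+k,\min-k)^\ast}.$$
Thus only two-row shapes $\lambda=(a,b)$ appear, and so $p_\lambda=0$ otherwise. For a fixed $(a,b)$, the multiplicity coming from the induced terms is the number of ordered pairs $(p,q)=(2i+1,2j-1)$ with $p\ge 3$ and $q\ge 1$ both odd, $p+q=a+b$, and $b\le p,q\le a$. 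Equivalently, it is the number of odd $p\in[b,a]$ with $p\ge3$ and $a+b-p\ge 1$. The extra term $\mathsf{C}_{(2j-1,1)^\ast}$ contributes exactly when $(a,b)=(2j-1,1)$.

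\emph{Step 3: count.} If $a\not\equiv b$ mod $2$, the sum $a+b$ is odd and cannot be a sum of two odd numbers, so $p_{(a,b)}=0$.

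If $a,b$ are both odd, the odd integers in $[b,a]$ number $(a-b)/2+1$. When $b\geq 3$ the constraints $p\ge 3$ and $q\ge 1$ are automatic. When $b=1$ the value $p=1$ is excluded, but the $(2j-1,1)^\ast$ term compensates for it. Either way $p_{(a,b)}=(a-b)/2+1$.

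If $a,b$ are both even, $p$ ranges over the odd integers strictly between $b$ and $a$, giving $(a-b)/2$ values. When $b\ge 2$ no constraint bites, giving $(a-b)/2$. When $b=0$, the value $p=1$ is excluded and nothing compensates (no $(2j-1,1)$ term has $b=0$), giving $(a-2)/2$. These match the stated formula for $p_{(a,b)}$.

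The main care point is Step 2. Specifically, one must check that Pieri applied to conjugate partitions gives exactly the conjugates of the two-row constituents, and one must keep the ordered-versus-unordered count straight: in $\mathsf{T}(n,b,r)$ the colors are ordered, so $p$ and $q$ play distinct roles and I count ordered pairs. The degree bookkeeping is routine once the shift identity $4j+1-(r-2)=2j+2$ is noted.
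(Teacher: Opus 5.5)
Your proposal is correct and follows essentially the same route as the paper. Like the paper, you combine Lemma \ref{tot} with Theorem \ref{decompthm} at $r=2j+1$, expand the induced modules via the conjugated Pieri rule, and count admissible $(i,j)$ for each two-row $(a,b)$: the $i=0$ solution is excluded when $b\in\{0,1\}$, and the extra $\mathsf{C}_{(2j-1,1)^\ast}$ term compensates when $b=1$. Your symmetric parametrization by ordered pairs with $b\le p,q\le a$ is equivalent to the paper's count over the number $k$ of added boxes, and you also make explicit the conjugation step and the degree-shift bookkeeping that the paper leaves implicit.
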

\begin{proof} Theorem $\ref{mainthm}$ tells us
		$$\Sigma^{2j-1}B(2,n,2j+1)  \sim
\Omega(\mathsf{C}_{(2j-1,1)^\ast})(n) \oplus \ds\bigoplus_{i=1}^{\lfloor(n-2j)/2\rfloor}\Sigma^{2i}\Omega(\mathsf{C}_{(2j-1)^\ast}\circ\mathsf{C}_{((2i+1)^\ast} )(n). $$	


Every term in this decomposition is of the form $\mathsf{C}_{(a,b)^\ast}$, so it remains to count the multiplicity in the sum in Lemma $\ref{tot}$.  Let's first treat the case $b\geq 2$.  If we form $(a,b)$ via the Pieri rule from a term in $\mathsf{C}_{2j-1}\circ \mathsf{C}_{2i+1}$ by adding $k$ boxes to the row $2j-1$, then the possible values for $k$ must satisfy $a=2j-1+k $, $k+b-1=2i$ and $k \leq a-b$. 
Any $k$ with parity opposite $a$ and $b$ will specify a unique $i$ and $j$, hence we find $(a-b)/2$ possibilities when $a$ and $b$ are even and $(a-b)/2+1$ when $a$ and $b$ are odd.

When $b=0$ the calculation is similar, except the $k=1$ solution would force $i=0$ which is not valid.  
Finally, when $b=1$, the solution corresponding to $a=k-1$ would force $j=0$ which is not valid.  However there is an extra term $(2j-1,1)$ in the statement of the result, so the multiplicity remains $(a-b)/2+1$. \end{proof}


It thus remains to compare the multiplicity $p_\lambda$ above to the appropriate summand of $H^\ast_c(F(S^1\vee S^1,-))$.   Observe that the $D_{12}$ representation in statement (2) of Theorem $\ref{ftthm}$ is induced from the 1-dimensional representation $V_2\boxtimes V_{1,1}$ of the group $S_2\times S_2 = \langle X^3\rangle \oplus \langle Y\rangle$.  It therefore remains to verify:

\begin{corollary}  For each $n$ there is an isomorphism of graded $S_n$-representations
	$$
 H^{\ast+2}(\mathsf{gr}_1(\mathsf{FT}_\Gamma))\oplus \mathsf{W}_{\pm} \cong  (H^\ast_c(F(S^1\vee S^1, -))\tensor (V_2\boxtimes V_{1,1}))_{S_2\times S_2}$$ 
\end{corollary}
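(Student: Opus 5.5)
The plan is to run the same character computation as in the proof of Lemma~\ref{qlambda}, replacing $D_{12}$ by its subgroup $S_2\times S_2=\langle X^3\rangle\times\langle Y\rangle$ and $Id\boxtimes V_{2,1}$ by the one-dimensional representation $V_2\boxtimes V_{1,1}$. We then compare the resulting multiplicities with the coefficients $p_{(a,b)}$ of the preceding Proposition. Taking $S_2\times S_2$-coinvariants of Equation~\ref{deceq} (rationally this is the same as the isotypic component) and using Schur--Weyl duality as in Lemma~\ref{qlambda}, we expect
$$(H^\ast_c(F(S^1\vee S^1,n))\tensor (V_2\boxtimes V_{1,1}))_{S_2\times S_2}\cong\bigoplus_\lambda \Sigma^{|\lambda|}\, r_\lambda\, \Phi[n,\lambda^\ast],\qquad r_\lambda=\left\langle V_2\boxtimes V_{1,1}, Res(\mathbb{S}_\lambda(\mathbb{Q}^2))\right\rangle .$$
The conjugate $\lambda^\ast$ and the shift $\Sigma^{|\lambda|}$ come from $\tilde H^\ast$ being concentrated in degree $1$. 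As before, $r_\lambda=0$ unless $\lambda=(a,b)$ has at most two rows.

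For the computation, note that $X^3=-I$ and $Y$ is the coordinate swap, so the four group elements are $I,-I,Y,-Y$. They have eigenvalues $\{1,1\}$, $\{-1,-1\}$, $\{1,-1\}$ and $\{-1,1\}$. The representation $V_2\boxtimes V_{1,1}$ is trivial on $-I$ and alternating on $Y$, so its character takes the values $1,1,-1,-1$ on these four elements. Using the formula $\sum_{i=b}^a x^iy^{a+b-i}$ for the character of $\mathbb{S}_{(a,b)}(\mathbb{Q}^2)$, we get:
\begin{itemize}
\item $\chi(I)=a-b+1$ and $\chi(-I)=(-1)^{a+b}(a-b+1)$;
\item $\chi(\pm Y)$ is an alternating sum of $a-b+1$ signs.
\end{itemize}
When $a\not\equiv b$ mod $2$ everything cancels and $r_{(a,b)}=0$. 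When $a\equiv b$ mod $2$ the alternating sum has an odd number of terms, so $\chi(Y)=\chi(-Y)=(-1)^b$. This gives
$$r_{(a,b)}=\tfrac14\bigl(2(a-b+1)-2(-1)^b\bigr)=\begin{cases}(a-b)/2 & b \text{ even},\\ (a-b)/2+1 & b\text{ odd}.\end{cases}$$

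Comparing with the Proposition, $r_{(a,b)}=p_{(a,b)}$ in every case except $b=0$ with $a$ even. There $r_{(a)}=a/2=p_{(a)}+1$. The discrepancy is exactly one copy of $\Sigma^{a}\Phi[n,(a)^\ast]=\Sigma^{a}\Phi[n,(1^{a})]$ for each even $a$, and summing these gives $\mathsf{W}_{\pm}$ by Equation~\ref{Wdef}.

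It remains to match degrees. The Proposition places $\Phi[n,\lambda^\ast]$ in $H^\ast(\mathsf{gr}_1(\mathsf{FT}_\Gamma))$ with shift $\Sigma^{|\lambda|+2}$, so $H^{\ast+2}$ realigns this to $\Sigma^{|\lambda|}$, which agrees with the right hand side above. The main point needing care is the sign and degree convention in the Schur--Weyl step, namely that degree-one generators turn $\mathbb{S}_\lambda$ into coefficients $\Phi[n,\lambda^\ast]$ with shift $|\lambda|$. I would settle this by citing the identical step in Lemma~\ref{qlambda} (after \cite[Section 6.1]{GH}), so that only the character table changes.
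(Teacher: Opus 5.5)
Your proposal is correct and follows the paper's proof. Both restrict to $\langle X^3\rangle\times\langle Y\rangle$, compute $\langle \mathrm{Res}\,\mathbb{S}_{(a,b)}(\mathbb{Q}^2),V_2\boxtimes V_{1,1}\rangle$ as in Lemma~\ref{qlambda}, find agreement with $p_{(a,b)}$ except an excess of one at $(a,b)=(2i,0)$, and absorb that excess into $\mathsf{W}_{\pm}$; your explicit character values on $\pm I,\pm Y$, giving $\frac{(a-b+1)-(-1)^b}{2}$, fill in the computation the paper only summarizes, and they check out.
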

\begin{proof}  As in \cite[Section 6.1]{GH}, we need only compute the inner product 
	$$
	p^\prime_\lambda = \langle Res(\mathbb{S}_{\lambda}(\mathbb{Q}^2)),V_2\boxtimes V_{1,1}\rangle
	$$
	along the restriction $\langle X^3\rangle \oplus \langle Y\rangle \hookrightarrow GL_2(\mathbb{Z})$.  Carrying out this computation as above (see Lemma $\ref{qlambda}$) we have non-zero contributions coming from $\lambda = (a,b)$, in which case we compute:
			$$ 	p^\prime_{(a,b)} = \begin{cases}
		(a-b)/2	&	a\equiv_2 b\equiv_2 0 \\
		(a-b)/2+1	&	a\equiv_2 b \equiv_2 1 \\
		0	&	a \not\equiv_2 b
	\end{cases}
	$$
	Comparing to the formula for $p$ we find agreement with $p^\prime$ except when $(a,b)=(2i,0)$.  Adding the $\mathsf{W}_\pm$ term makes the correction.\end{proof}


%

{\bf Acknowledgment:} We would like to thank 
Nir Gadish, 
Louis Hainaut,
Sam Payne,
Dan Petersen,
Geoffrey Powell and
Thomas Willwacher 
for conversations and/or email correspondence which helped shape our understanding of the ideas we've looked to tie together in this article. BW gratefully acknowledges support from Simons Foundation Grant No.\ 704658.

\end{document}